\newcounter{warn}[page]
\newcommand{\danger}{${\color{red}\triangle}\llap{\raisebox{.3ex}%
{\tiny!\hspace{1.45ex}}}$}
\newcommand{\warning}[1]{%
\raisebox{.01em}[0em]{\danger\ifnum\value{warn} > 1%
\tiny\bf\arabic{warn}\fi}%
\marginpar{\color{red}\tiny{\ifnum\value{warn} > 1\tiny\bf\arabic{warn}:\fi}\tiny #1}%
\stepcounter{warn}}
\newtheorem{theorem}{Theorem}[section]
\newtheorem{definition}[theorem]{Definition}
\newtheorem{proposition}[theorem]{Proposition}
\newtheorem{lemma}[theorem]{Lemma}
\theoremstyle{remark}
\newtheorem{remark}{Remark}
\renewcommand{\emptyset}{\varnothing}
\newcommand{\N}{\mathbb{N}}
\newcommand{\Z}{\mathbb{Z}}
\newcommand{\R}{\mathbb{R}}
\newcommand{\C}{\mathbb{C}}
\newcommand{\CP}{\mathbb{CP}}
\renewcommand{\S}{\mathbb{S}}
\newcommand{\Id}{\mathrm{Id}}
\newcommand{\eval}{\mathrm{ev}}
\newcommand{\cutoff}{\chi}
\newcommand{\Steps}{{\mathcal{S}}}
\newcommand{\Gen}[2][\star]{{\mathcal{L}(#2,#1)}}
\newcommand{\GenBrut}[2][\star]{{\tilde{\mathcal{L}}(#2,#1)}}
\newcommand{\Rel}[2][\star]{{\mathcal{R}(#2,#1)}}
\newcommand{\RelHomGen}{R_{\mathrm{FMF}}}
\newcommand{\RelMorseGen}{R_{\mathrm{M}2}}
\newcommand{\Orbits}{\mathcal{P}}
\newcommand{\CovOrbits}{\tilde{\Orbits}}
\newcommand{\Crit}{\mathrm{Crit}}
\newcommand{\U}{\mathcal{U}}
\newcommand{\M}{\mathcal{M}}
\newcommand{\Mint}{\overset{\ \circ}{\M}}
\newcommand{\MMorse}{\M_{\mathrm{Morse}}}
\newcommand{\MintMorse}{\Mint_{\mathrm{Morse}}}
\newcommand{\glueto}{\overset{\sharp}{\rightarrow}}
\newcommand{\orient}{\epsilon}
\newcommand{\Cardabs}{\mathop{\sharp_{\mathrm{abs}}}}
\newcommand{\Cardalg}{\mathop{\sharp_{\mathrm{alg}}}}
\begin{document}

\title[A Floer Fundamental group]%
{A Floer Fundamental group
\\[1ex] 
\sc \small Groupe fondamental de Floer%
}%

\author[J.-F. Barraud]{J.-F. Barraud}%
\address{%
  Institut de mathématiques de Toulouse\\
  Université Paul Sabatier -- Toulouse III\\
  118 route de Narbonne\\
  F-31062 Toulouse Cedex 9\\
  FRANCE} %
\subjclass[2000]{57R17; 53D40, 14F35}%
\keywords{Symplectic topology, Fundamental group, Floer theory, Morse
  theory, Homotopy, Arnold conjecture}

\begin{abstract}
 The main purpose of this paper is to provide a description of the
 fundamental group of a symplectic manifold in terms of Floer theoretic objects. 
 As an application, we show that when counted with a suitable notion of
 multiplicity, non degenerate Hamiltonian diffeomorphisms have enough
 fixed points to generate the fundamental group. 

 \bigskip

 \noindent \textsc{Résumé.} 
 L'objet de cet article est de donner une description du groupe fondamental d'une
 variété symplectique en terme d'objets de la théorie de Floer.
 A titre d'application, on montre que tout difféomorphisme Hamiltonien non dégénéré a,
 si on les compte avec une notion convenable de multiplicité, suffisamment 
 de points fixes pour engendrer le groupe fondamental.
\end{abstract}

\maketitle

\bigskip

\section{Introduction}
\subsection{Presentation of the results}
In many ways, the topology of a space influences its geometry, and this
is particularly true in symplectic geometry. Having a symplectic
interpretation of a topological invariant is a good tool to explore this
relationship. The celebrated Floer Homology (\cite{Floer3}\cite{Floer4})
is of course a strong illustration of this phenomenon. Introduced to
prove the homological version of the Arnold conjecture (\cite{Arnold}),
it quickly became one of the most powerful tools in symplectic geometry.

However, all the techniques derived from the original Floer construction
are homological, or at least chain complex based in nature. The notion of
cobordism (among moduli spaces) is even at the root of the original ideas
of M.~Gromov \cite{Gromov} of using pseudo-holomorphic curves to derive
invariants in symplectic geometry. The use of local coefficients in Floer
complexes allows Floer theory to involve some homotopical invariants, but
purely homotopical tools are still missing, and it is the goal of this
paper to provide a Floer theoretic interpretation of the fundamental
group.

All the objects this construction is based on are still classical Floer
theoretic objects, but the essential non Abelian phenomena that make the
difference between the fundamental group and the first homology
group are caught by a deeper use of $1$-dimensional moduli
spaces, and the use of ``augmentations''.

More precisely, let $(M,\omega)$ be a connected closed monotone
symplectic manifold and choose an Hamiltonian function $H$ on $M$, a
possibly time dependent almost complex structure $J$ compatible with
$\omega$, and a point $\star$ in $M$ to serve as the base point. Recall
the Floer trajectories in this setting are (finite energy) maps
$u:\R\times S^{1}\to M$ satisfying the Floer equation
$$
\frac{\partial u}{\partial s}(s,t) +
J_{t}(u(s,t))\frac{\partial u}{\partial t}(s,t)
 =
 J_{t}(u(s,t)) X_{H_{t}}(u(s,t)),
$$
where $X_{H}$ is the Hamiltonian vector field associated to $H$. 

Using a cutoff function $\chi$ to turn off the non homogeneous
Hamiltonian term on the positive end of the tube (resp. on both ends but
preserving it on an annulus of varying modulus) allows to define moduli
spaces denoted by $\M(x,\emptyset)$ (resp. $\M(\star,\emptyset)$), which
are Floer counterparts of Morse unstable manifolds (see
the comments after definition \ref{def:FloerLoopStep}). %
It is a classical result of Floer theory that for a generic set of
auxiliary data $(H,J,\star,\chi)$, all these moduli spaces are smooth
finite dimensional manifolds.

Similarly to the Morse setting where a loop can be seen as a
concatenation of paths associated to unstable manifolds of index $1$
critical points, we use the components of the above $1$-dimensional
moduli spaces to define a notion of Floer loop (see definition
\ref{def:FloerLoops}). These loops come naturally with concatenation and
cancellation relations for which they form a group $\Gen{H}$. The main
statement of the paper is then the following theorem~:
\begin{theorem}\label{thm:RoughSurjectivity}
  There is a natural evaluation map that induces a surjective group
  homomorphism $\xymatrix{\Gen{H,J}\ar@{>>}[r]&\pi_{1}(M,\star)}$.
\end{theorem}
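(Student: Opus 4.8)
The plan is to build the evaluation morphism directly from the geometric content of a Floer loop, then verify in turn that it is well defined modulo concatenation and cancellation, that it is multiplicative, and finally that it is onto — the last point being where the real work lies. By definition~\ref{def:FloerLoops} a Floer loop is a finite ordered list of \emph{steps}, each step being a point of a one-parameter family sitting inside a moduli space of one of the shapes $\M(x,\emptyset)$, $\M(\star,\emptyset)$, or an ordinary Floer cylinder, together with the bookkeeping that records how two successive steps are glued along a shared asymptotic orbit. Each of these pieces carries a tautological evaluation to $M$ — the value of the map $u$ at the distinguished marked point, or the imposed passage through $\star$ on the $\M(\star,\emptyset)$ pieces — so letting the internal parameter of a step vary produces a \emph{path} in $M$. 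The endpoint conditions packaged into the notion of a Floer loop (the outermost steps are $\M(\star,\emptyset)$-pieces through $\star$, and successive steps match along a common orbit, i.e.\ along common evaluation endpoints) make the ordered concatenation of these paths a loop of $M$ based at $\star$. Its pointed homotopy class is $\eval$ of the Floer loop; the construction is natural since it is nothing more than evaluating the Floer maps themselves.

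\textbf{Descent and multiplicativity.} Concatenation of Floer loops is taken by $\eval$ to concatenation of the associated loops of $M$, hence to the product in $\pi_{1}(M,\star)$, and a constant loop goes to a constant loop; so it only remains to see that $\eval$ is constant along each generator of $\Rel{H,J}$. Every cancellation relation is extracted from a one-dimensional moduli space $\mathcal{N}$ (a one-parameter family of steps, or a one-dimensional capped/Floer moduli space): its compactification $\overline{\mathcal{N}}$ is a compact one-manifold, and the two Floer loops appearing in the relation are precisely the two endpoint configurations of a component of $\overline{\mathcal{N}}$, once the rigid breakings dictated by the augmentation/gluing data are inserted. Evaluating over that component is then an explicit based homotopy in $M$ between the two evaluated loops, so $\eval$ descends to a group morphism $\Gen{H,J}\to\pi_{1}(M,\star)$.

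\textbf{Surjectivity.} Here the $\M(\star,\emptyset)$ moduli spaces are used as the Floer substitute for the unstable manifold of the minimum of a Morse function — which is all of $M$: the relevant component is path-connected and its evaluation to $M$ is onto, so any point of $M$ is joined to $\star$ by an evaluated path supported on a single such step. Given $[\alpha]\in\pi_{1}(M,\star)$, one represents $\alpha$ by a generic based loop, subdivides it so finely that over each arc the Floer configurations above it can be chosen continuously inside transverse moduli spaces, and replaces each arc by a step whose evaluated path is homotopic rel endpoints to that arc, bridging the orbit/base-point endpoints by short $\M(\star,\emptyset)$- or $\M(x,\emptyset)$-detours; the resulting Floer loop evaluates to $[\alpha]$. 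Alternatively, and this is likely the cleanest route, one first degenerates $(H,J)$ to a Morse-type datum, where the analogous loop group is identified by the classical Morse argument (the $1$-skeleton of a CW/Morse decomposition generates $\pi_{1}$), and then transports generators along the continuation maps, which are compatible with the two evaluation morphisms. The main obstacle is exactly this discretization: replacing a continuous one-parameter family of Floer configurations over an arc by a genuine, finitely described step with controlled evaluation, which requires choosing the subdivision adapted to a cover of $M$ by images of moduli pieces together with transversality of all evaluation maps in play; once that is in place, surjectivity — and hence the theorem — follows.
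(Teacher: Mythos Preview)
Your construction of the evaluation map and the check that it descends to $\Gen{H}$ are essentially fine, though you overcomplicate the descent step: $\Gen{H}$ is defined only by the trivial cancellation relations $\sigma\sigma^{-1}\sim 1$ (see \eqref{eq:GenGp}), so compatibility with $\eval$ is immediate --- a path followed by its reverse is nullhomotopic. The relations $\Rel{H}$ you invoke only enter later, in Theorem~\ref{thm:RoughIsomorphism}.

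The surjectivity argument, however, has a genuine gap. Your first approach rests on the claim that the evaluation from $\M(\star,\emptyset)$ to $M$ is onto, by analogy with the unstable manifold of a Morse minimum. But $\M(\star,\emptyset)$ is \emph{one}-dimensional (see the dimension formulae in section~\ref{sec:Preliminaries}), so its evaluation image is a collection of arcs in $M$, not all of $M$. More fundamentally, Floer loop steps are rigid objects: they are the finitely many connected components of prescribed one-dimensional moduli spaces, and there is no mechanism for ``subdividing a loop finely'' and realizing each arc as the evaluation of a step. You cannot choose steps to follow a given path; the steps are what the Hamiltonian and almost complex structure hand you.

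Your second approach --- degenerate to a Morse datum and transport along continuation --- points in the right direction but is missing the key device. The paper does not degenerate $H$; instead it fixes an auxiliary Morse function $f$ with a single minimum at $\star$ and works with \emph{hybrid} Morse--Floer moduli spaces (section~\ref{sec:HybridWalks}). For each index~$1$ critical point $b$ of $f$, the paper runs a \emph{crocodile walk} (alternating upper and lower gluings) on the set of twice-broken configurations from $b$ through an orbit to $\emptyset$. The special orbit $W_{\star}(b)$ of this walk has lower steps that assemble into a genuine Floer loop $\psi(b)\in\Gen{H}$, and the suspended moduli pieces patch into a disc $\Delta(W_{\star}(b))$ whose boundary exhibits the homotopy $\eval(\psi(b))\sim\eval(b)$. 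This yields a morphism $\psi:\Gen{f}\to\Gen{H}$ commuting with evaluation (Lemma~\ref{lem:PsiTop}), and surjectivity then follows from the classical Morse surjectivity $\Gen{f}\twoheadrightarrow\pi_{1}(M,\star)$. The crocodile walk is precisely the replacement for the ``continuation map'' you allude to, and without it there is no way to produce Floer loops with prescribed evaluation class.
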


A description of the relations is also given, but, although they
obviously only depend on $H$, $J$, $\star$, and $\chi$, we resort to an
auxiliary Morse function to get a finite presentation for them (see
section \ref{sec:Relations}). Nevertheless, we produce explicit relations
such that the generated normal subgroup $\Rel{H}$ satisfies the following
statement~:
\begin{theorem}\label{thm:RoughIsomorphism}
The evaluation map induces a group isomorphism
$$
\xymatrix{\faktor{\Gen{H}}{\Rel{H}}\ar^{\ \ \ \ \sim}[r]& \pi_{1}(M,\star)}.
$$  
\end{theorem}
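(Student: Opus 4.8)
The plan is to derive Theorem~\ref{thm:RoughIsomorphism} from Theorem~\ref{thm:RoughSurjectivity} by showing that the kernel of the induced morphism $\eval\colon\Gen{H}\to\pi_1(M,\star)$ is exactly the normal subgroup $\Rel{H}$. This amounts to two inclusions. The inclusion $\Rel{H}\subseteq\ker\eval$ is the routine half and is what makes the statement well posed: each generating relation, of type $\RelHomGen$ or of type $\RelMorseGen$, equates two Floer loops whose evaluations are, by construction, joined by an explicit homotopy in $M$, namely the image under the evaluation map of a connected one- or two-dimensional moduli space of (possibly mixed Floer/Morse) configurations interpolating between them. Taking into account the base-point bookkeeping built into $\eval$, this says that the word of $\Gen{H}$ attached to the relation lies in $\ker\eval$; passing to the generated normal subgroup is then immediate. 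Hence $\eval$ descends to a morphism $\faktor{\Gen{H}}{\Rel{H}}\to\pi_1(M,\star)$, which is surjective by Theorem~\ref{thm:RoughSurjectivity}.

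The substance is the reverse inclusion $\ker\eval\subseteq\Rel{H}$, that is, injectivity of the descended morphism. I would argue geometrically, by filling a disk. Let $\gamma$ be a Floer loop whose evaluation is trivial in $\pi_1(M,\star)$, and choose a smooth map $v\colon D^{2}\to M$ restricting on $\partial D^{2}$ to the evaluated based loop of $\gamma$. Using the auxiliary Morse function $f$ and a generic pseudo-gradient, perturb $v$ so that it is transverse to all the stable and unstable manifolds of $f$ and to the evaluation maps of the low-dimensional Floer and Morse moduli spaces appearing in the relations; this decomposes $D^{2}$ into finitely many strata. Sweeping the boundary circle across this stratification rewrites the evaluation of $\gamma$ as a concatenation of short arcs lying in the $1$-skeleton of the CW structure associated to $f$, and each elementary move required to do so --- sliding an arc across the stable manifold of an index $1$ critical point, cancelling a backtrack, or crossing the stable manifold of an index $2$ critical point --- is matched, through the evaluation dictionary, with an application of one of the prescribed relations in $\Rel{H}$. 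In this way $[\gamma]$ is rewritten, modulo $\Rel{H}$, as the class of a Morse loop whose evaluated word is the boundary word read off from the index $2$ cells crossed by $v$; since the index $1$ and index $2$ critical points of $f$ realize the classical Morse presentation of $\pi_{1}(M,\star)$, and the $\RelMorseGen$ relations encode these relators through the Morse-loop/Floer-loop dictionary, that Morse loop is a product of conjugates of elements of $\Rel{H}$, hence trivial. Together with the first inclusion and Theorem~\ref{thm:RoughSurjectivity}, this gives the isomorphism.

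The main obstacle is the \emph{completeness} of the relation set used in the previous step: one has to check that every elementary move produced by sweeping a generic disk, together with every move relating a Floer loop to its Morse avatar, is already generated by $\RelHomGen$ and $\RelMorseGen$, so that no further relation is needed. This is exactly where the transversality results and the gluing/compactness theory for the one- and two-dimensional moduli spaces are used in an essential way, and where introducing the auxiliary Morse function is what keeps the bookkeeping --- and therefore the presentation --- finite; these verifications are carried out in section~\ref{sec:Relations}. (Alternatively, one could reduce $\Gen{H}$ to its Morse analogue by a continuation argument from $(H,J)$ to a $C^{2}$-small Hamiltonian, but the disk-filling route keeps the whole argument inside the fixed Floer data.)
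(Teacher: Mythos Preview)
Your disk-filling argument has a genuine gap, and it is exactly the one you flag in your last paragraph without resolving. Sweeping a filling disc $v:D^{2}\to M$ across the Morse stratification produces homotopies of \emph{paths in $M$}, i.e.\ equalities in $\pi_{1}(M,\star)$; it does not by itself produce equalities in $\Gen{H}$ modulo $\Rel{H}$. You assert that each elementary move ``is matched, through the evaluation dictionary, with an application of one of the prescribed relations,'' but the evaluation goes the wrong way for this: knowing that two Floer loops have homotopic evaluations says nothing about their difference in $\Gen{H}/\Rel{H}$ unless you already know $\ker\eval\subset\Rel{H}$, which is what you are trying to prove. The dictionary you need is not $\eval$ but the pair of morphisms $\phi:\Gen{H}\to\Gen{f}$ and $\psi:\Gen{f}\to\Gen{H}$ built from hybrid crocodile walks, and the paper uses them \emph{instead of} a disk argument, not alongside one.

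Concretely, the relation set $\RelHomGen(H)$ is \emph{defined} as $\{\gamma^{-1}\psi(\phi(\gamma)):\gamma\in\Gen{H}\}$, so the identity $\gamma\equiv\psi(\phi(\gamma))$ in $\Gen{H}/\Rel{H}$ is built in from the start. Given $\eval(\gamma)=1$, the commutativity $\eval\circ\phi=\eval$ gives $\phi(\gamma)\in\ker(\eval:\Gen{f}\to\pi_{1}(M,\star))=\Rel{f}$ by the classical Morse presentation; then $\psi(\phi(\gamma))\in\psi(\Rel{f})\subset\langle\psi(\RelMorseGen(f))\rangle=\langle\RelMorseGen(H)\rangle\subset\Rel{H}$, and combining with $\gamma^{-1}\psi(\phi(\gamma))\in\Rel{H}$ gives $\gamma\in\Rel{H}$. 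This is a four-line argument once $\phi$ and $\psi$ are available, and the only disk-filling it invokes is the classical one on the Morse side, hidden inside ``$\phi(\gamma)\in\Rel{f}$''. Your attempt to run the sweep directly on the Floer side cannot succeed without first invoking $\RelHomGen$ to replace $\gamma$ by something in the image of $\psi$; once you do that, the remainder of your geometric argument is redundant.
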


\smallskip

Notice the construction is presented here in the absolute setting, i.e.
Hamiltonian fixed points problem, but also makes sense in the relative
one, i.e. intersections of a Lagrangian sub-manifold with its
deformations under Hamiltonian isotopies problem. Although the latter can
be expected to hold the most interesting applications, we choose to focus
on the former for the sake of simplicity and to better highlight the main
ideas~: the generalization to the latter entails exactly the same issues
as for the homology and involves no new idea. 

Finally, the construction also makes sense in the stable Morse setting
(i.e. study of Morse functions  that are quadratic at infinity on
$M\times\R^{N}$). Although the corresponding results have their own
interest and would deserve a separate discussion, they will only be
quickly sketched without proofs in the last section of this paper (see
section \ref{sec:StableMorse}), rather as an illustration and a
simplified finite dimensional model of the Floer setting.

\medskip

A natural outcome of this construction is an estimate on the number of
fixed points of Hamiltonian diffeomorphisms, but not in usual way, since
a notion of multiplicity has to be introduced. Indeed, rather than the
critical points themselves, the relevant objects required to build loops
are their unstable manifolds (called ``steps'' in the sequel), and while
to one critical point corresponds exactly one unstable manifold in the
Morse setting, Floer counterparts of unstable manifolds may have several
components, which have all to be taken into account.

Counting the number $\nu_{J}(x)$ (resp. $\nu_{J}(\star)$) of steps
through a given Conley-Zehnder index $1-n$ fixed point $x$ (resp.
$\star$) defines a notion of multiplicity for these points (that depends
on the almost complex structure, see definition \ref{def:Multiplicity}
for more details). We then  have the following theorem~:

\begin{theorem}\label{thm:RoughEnoughPoints}
 Let $\rho(\pi_{1}(M))$ denote the minimal number of generators of the
 fundamental group. Then~:
  \begin{equation}
    \label{eq:RoughEnoughPoints} \nu_{J}(\star) + \sum_{|y|=1}\nu_{J}(y)
    \geq \delta(\pi_{1}(M)),
  \end{equation}
  where the sum runs over the contractible $1$-periodic orbits, or more
  precisely over the homotopy classes of cappings of such orbits with
  Conley-Zehnder index $1-n$. 
\end{theorem}

  \begin{remark}   
    The number $\nu_{J}(\star)$ is a sum of contributions of index $-n$
    fixed points (see definition \ref{def:Multiplicity}), so that
    inequality \eqref{eq:RoughEnoughPoints} can be interpreted as a lower
    bound for the number of particular Floer configurations associated
    to fixed points with Conley-Zehnder index $-n$ and $1-n$.
  \end{remark}

\begin{remark}
  This statement should be compared to its Morse analogue, namely that
  for any Morse function $f:M\to\R$, we have
  \begin{equation}
    \label{eq:MorseEnoughPoints}
    \sharp \Crit_{1}(f)\geq\rho(\pi_{1}(M)),
  \end{equation}
  where $\Crit_{1}(f)$ denotes the set of index $1$ critical points.

  As already mentioned, the construction, and hence the definition of the
  multiplicities makes sense in the stable Morse and a fortiori Morse
  settings (moreover, we claim, without proof, that for $C^{2}$ small
  Morse functions, Morse and Floer moduli spaces can be identified like
  in \cite{Floer3}, and that multiplicities coïncide in this case).  For
  an index $1$ Morse critical point $y$, $\nu_{J}(y)$ is the number of
  components of its unstable manifold, and hence always evaluates to $1$.
  Similarly, $\nu_{J}(\star)+1$ is the number of Morse trajectories
  through $\star$ and hence evaluates to $1$. As a consequence,
  $\nu_{J}(\star)=0$, so that
%
  $$
  \nu_{J}(\star) + \sum_{|y|=1}\nu_{J}(y) = \sharp \Crit_{1}(f),
  $$
  and \eqref{eq:RoughEnoughPoints} appears as a generalization of
  \eqref{eq:MorseEnoughPoints} to the more general Floer setting.
\end{remark}

\begin{remark}
  There is no hope to avoid multiplicities in \eqref{eq:RoughEnoughPoints} as
  long as it results from a construction that also applies to the stable
  Morse setting, which is the case of ours.

  Indeed, M. Damian showed in \cite{Mihai} that the stable Morse number
  (which is the minimal number of critical points of a Morse function
  which is quadratic at infinity on a product $M\times \R^{N}$) may be
  strictly smaller than the Morse number (which is the minimal number of
  critical points of a Morse function on $M$), and that stable Morse
  functions may not have enough points to generate the fundamental group
  (for instance, such functions do exist on manifolds whose fundamental
  group is $(A_{5})^{20}$).

  This implies that the multiplicities in \eqref{eq:RoughEnoughPoints}
  are mandatory, and the construction offers a new point of view on this
  question~: although there may not be enough geometric critical points
  to generate the fundamental group, it explains how the same point can
  define several generators to overcome this deficit and still recover
  the fundamental group.
\end{remark}


\begin{remark}
  The inequality \eqref{eq:RoughEnoughPoints} is obviously different in
  nature from the Morse inequalities derived from the Floer homology,
  since one may have $\rho(\pi_{1}(M))>\beta_{1}(M)$ (where
  $\beta_{1}(M)$ is the first Betti number of $M$). It is also different
  from the results of K.~Ono and A.~Pajitnov (\cite{OnoPajitnov}, see
  below) and more generally from any result based on the algebraic study
  of a chain complex that would also apply to the stable Morse setting.
  Indeed, examples are known of stable Morse functions that have
  strictly less critical points than the minimal number of generators of
  the fundamental group. 

%
\end{remark}

\medskip

The role and the control of the contributions of the multiplicities in
general is a deep and intriguing question, closely related to the
estimation of the minimal number of periodic orbits.

The following theorem ensures the existence of at least one Hamiltonian
periodic orbit  with Conley-Zehnder index $1-n$  and  non
vanishing multiplicity provided the fundamental group
  is non trivial~:%

\begin{theorem}\label{thm:RoughAtLeastOneOrbit}
  Let $(M,\omega)$ be a monotone symplectic manifold. Suppose
  $\pi_{1}(M)\neq\{1\}$. Then every non degenerate Hamiltonian function
  has to have at least one contractible $1$-periodic orbit of Conley
  Zehnder index $1-n$. Moreover, for a generic choice of possibly time
  dependent almost complex structure, at least one such orbit has non
  vanishing multiplicity.
\end{theorem}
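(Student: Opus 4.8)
The plan is to derive both statements from the surjectivity in Theorem~\ref{thm:RoughSurjectivity}, using a single structural observation: a Floer loop step running through the base point $\star$ is sent by the evaluation map of Theorem~\ref{thm:RoughSurjectivity} to a contractible loop of $M$.

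First I would recall from definitions~\ref{def:FloerLoopStep} and~\ref{def:FloerLoops} that $\Gen{H,J}$ is generated by Floer loops, each of which is a concatenation of loop steps, and that every loop step runs either through $\star$ or through a contractible $1$--periodic orbit whose capping has Conley--Zehnder index $1-n$ --- this being exactly the index for which the relevant component of $\M(x,\emptyset)$ is one--dimensional, just as only index~$1$ critical points enter the $\pi_1$--part of the Morse complex. Next I would establish the structural observation: the moduli datum underlying a $\star$--step is a one--parameter family of solutions of the Floer equation with the inhomogeneous term cut off on both ends (the ``annulus of varying modulus'' family of the comments after definition~\ref{def:FloerLoopStep}), so each such solution extends over a disk, the caps carrying a genuine bounded $J$--holomorphic map; the loop that the evaluation map reads off from such a family therefore lies in, and is null--homotopic inside, the image of that disk. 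Consequently the image under the evaluation morphism of any concatenation of $\star$--steps is the neutral element of $\pi_1(M,\star)$.

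Granting this, the two assertions are immediate. If a non--degenerate $H$ had no contractible $1$--periodic orbit of Conley--Zehnder index $1-n$, then by the first point every Floer loop would be a concatenation of $\star$--steps, hence the evaluation morphism $\Gen{H,J}\to\pi_1(M,\star)$ would be trivial, contradicting Theorem~\ref{thm:RoughSurjectivity} together with $\pi_1(M)\neq\{1\}$; so such an orbit exists. For the second part, fix a generic $J$ for which the construction of Theorem~\ref{thm:RoughSurjectivity} is valid (this is the genericity used throughout: monotonicity makes the moduli spaces compact and, for generic $J$, cut out transversally, so that $\Gen{H,J}$ and the multiplicities $\nu_J$ of definition~\ref{def:Multiplicity} are defined). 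Since $\nu_J(x)$ counts the loop steps through $x$, if $\nu_J(x)=0$ for every contractible $1$--periodic orbit $x$ of Conley--Zehnder index $1-n$, then no loop step passes through any orbit, so again every Floer loop is a concatenation of $\star$--steps and the evaluation morphism is trivial --- the same contradiction. Hence at least one such orbit has $\nu_J(x)\geq 1$; in fact the argument yields this for every $J$ for which the construction is defined.

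The main obstacle is the structural observation rather than the logical skeleton: one must pin down exactly which loop of $M$ a $\star$--step is evaluated to and verify that cutting off the Hamiltonian term on the two ends really produces caps that are honest $J$--holomorphic disks bounding that loop, so that no class of $\pi_1(M)$ can be smuggled in through a sphere bubble or through the degeneration parameter of the annulus family. This is where the monotonicity hypothesis and the precise shape of definition~\ref{def:FloerLoopStep} do the real work; the rest is formal.
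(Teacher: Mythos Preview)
Your overall strategy---deduce the theorem from the surjectivity of Theorem~\ref{thm:RoughSurjectivity} together with the claim that Floer loops built entirely from $\star$-steps evaluate to the trivial element---is different from the paper's and, once the key claim is properly established, does succeed. The paper instead argues directly: assuming no index~$1$ orbit has non-vanishing multiplicity, it runs a variant of the crocodile walk on $\{\gamma_-,\gamma_+\}\times\M(\star,x)$ for each augmented $x$, patches suspensions into discs showing $\gamma^{n_x}\sim 1$ with $n_x=\Cardalg\M(\star,x)$, and then uses the identity $\sum_x n_x m_x=1$ coming from $PSS_{FM}\circ PSS_{MF}=\Id$ to conclude $\gamma\sim 1$. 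Your route trades this algebraic input for the full surjectivity machinery; the paper even remarks that its proof ``is not a consequence of the above construction'', so your reduction, if completed, is a genuine alternative.

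The gap is in your justification of the key observation. First, an individual $\star$-step evaluates to a \emph{path}, not a loop; only a full Floer loop gives an element of $\pi_1(M,\star)$. Second, and more seriously, this path is traced out by a one-parameter \emph{family} of solutions and is not contained in the image of any single one, so ``the loop\dots lies in, and is null-homotopic inside, the image of that disk'' does not parse. What one actually needs is the following. For each $\star$-step, the half-suspension of the corresponding component of $\M(\star,\emptyset)$ (all solutions share the end $\star$) yields a square showing that the evaluated path $\sigma$ is homotopic rel endpoints to $\delta_0^{-1}\cdot\delta_1$, where $\delta_j=(\beta_j|_{t=0})\cdot(\alpha_j|_{t=0})$ comes from the broken end $(\beta_j,\alpha_j)$. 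Then one uses that \emph{consecutive} $\star$-steps share their augmentation $\alpha$, so the cross-term $\delta_{i,1}\cdot\delta_{i+1,0}^{-1}$ collapses to $\beta'_i|_{t=0}\cdot(\beta_{i+1}|_{t=0})^{-1}$ with $\beta'_i,\beta_{i+1}\in\M(\star,x)$ for the \emph{same} $x$; this loop is then contractible because it is a meridian of the sphere $\beta'_i\cup_x\bar\beta_{i+1}$ and hence bounds a hemisphere. This last point is exactly where $\star$ is special: for steps through an orbit, the analogous $\beta$'s lie in $\M(y_i,x)$ and $\M(y_{i+1},x)$ for \emph{different} $y_i$, and no such cancellation is available. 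None of (i)--(iii) appears in your proposal, and your disk/cap picture does not substitute for them.
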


In particular, this result provides at least one index $1-n$ orbit even
if the first homology group of the manifold vanishes, provided the
fundamental group is non trivial.

One interesting feature of this theorem is that its proof is essentially
geometric, where the usual Floer technics are rather algebraic~: it comes
down to patching suspensions of $1$-dimensional moduli spaces side to
side to form a disc. In this sense, although theorem
\ref{thm:RoughAtLeastOneOrbit} is not strictly speaking a corollary of
the Floer interpretation of the fundamental group given in this paper, it
derives from the same principal idea, namely that $1$-dimensional moduli
spaces do contain information that the homology does not catch.

Moreover, the orbit  exhibited in this statement 
has explicitly non vanishing multiplicity, while this is not immediately
obvious in other constructions that provide lower bounds on the number of
periodic orbits.


\subsection*{Relation to the Arnold conjecture and other results}

Theorem \ref{thm:RoughEnoughPoints} is obviously a variation on the
Arnold conjecture. In its non degenerate and strongest form, this
conjecture claims that the total number of $1$-periodic orbits of a non
degenerate Hamiltonian flow can not be less than the minimal number of
critical points for a Morse function (or stable Morse function in a
weaker form of the conjecture). A weaker but maybe more convincing and
tractable version involves the \emph{stable} Morse number, which is the
minimal number of critical points of Morse functions which are quadratic
at infinity on products $M\times \R^{N}$.

This conjecture is closely related to the birth of symplectic geometry
itself. A strong breakthrough was achieved by A. Floer who constructed
his chain complex to establish the Homological version of the Arnold
conjecture for compact monotone symplectic manifolds, opening the way to
huge efforts by many authors to generalize his original
construction. 

Until very recently however, work regarding this
conjecture was focused on its homological version.

In a recent work \cite{OnoPajitnov}, K.~Ono and A.~Pajitnov use the Floer
complex with local coefficients to extend these constraints to the
Hamiltonian setting. In particular, they show the following
\begin{theorem}[K.~Ono, A.~Pajitnov]\label{thm:OnoPajitnov}
  Suppose $M$ is a weakly monotone symplectic manifold and let $H$ be a
  Hamiltonian function on it. Then, if they are all non degenerate, the
  number $p(H)$ of fixed points of the associated Hamiltonian
  diffeomorphism satisfies
  $$
  \begin{cases}
  p(H)\geq 1 & \text{if }|\pi_{1}(M)|=+\infty\\
  p(H)\geq \delta(\pi_{1}(M))&  \text{if } |\pi_{1}(M)|<+\infty
  \end{cases},
  $$
  where $\delta(\pi_{1}(M))$ is the minimal number of generators of the
  kernel of the augmentation $\Z[\pi_{1}(M)]\to\Z$.
\end{theorem}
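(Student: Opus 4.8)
The plan is to run Floer theory with coefficients twisted by $\pi_1(M)$ and read the estimate off the algebra of the resulting complex; in essence this is the argument of Ono and Pajitnov, which itself adapts to the Hamiltonian setting the algebraic method of Sharko and Damian for the stable Morse number. The first step is to build the Floer complex $CF_*(H)$ generated by the contractible $1$-periodic orbits of $H$, each with a chosen capping, with coefficients in the group ring $\Z[\pi_1(M)]$ --- or rather in the Novikov-type completion $\Lambda$ of it that weak monotonicity forces, in which one allows infinite sums of group elements that are finite below each energy level. This is exactly what makes the differential well defined once the standard package for weakly monotone $M$ is in place: generic $J$ for transversality, compactness with at most simple holomorphic bubbles, coherent orientations. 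With the cappings fixed, $CF_*(H)$ is a \emph{free} $\Lambda$-module whose rank is the number of contractible orbits, hence at most $p(H)$.

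The second step is to identify its homology. A PSS- or continuation-type comparison of $H$ with a $C^{2}$-small Morse Hamiltonian identifies $HF_*(H;\Lambda)$, up to the usual index shift, with the Morse homology of $M$ with local coefficients in $\Lambda$; since $\Lambda$ carries the left translation action of $\pi_1(M)$, this is $H_*(\widetilde M)\otimes_{\Z}\Lambda_{0}$ as a graded $\Lambda$-module, where $\widetilde M$ is the universal cover. All I need from this is: in the lowest degree the homology is the free rank-one $\Lambda$-module attached to the trivial $\pi_1(M)$-module $\Z=\Z[\pi_1(M)]/I$, where $I$ is the augmentation ideal (because $H_{0}(\widetilde M)=\Z$); and, as a bonus, it vanishes one degree higher, since $\widetilde M$ is simply connected.

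The third step is elementary algebra. In the lowest two degrees the complex reads $\Lambda^{m_{1}}\xrightarrow{\partial}\Lambda^{m_{0}}\to\Z\to 0$, with the outgoing differential from the bottom degree vanishing, so the sequence is exact at $\Lambda^{m_{0}}$: it is the beginning of a free resolution of the trivial $\Z[\pi_1(M)]$-module. Since that module is cyclic, an integral change of basis of $\Lambda^{m_{0}}$ puts the surjection $\Lambda^{m_{0}}\to\Z$ in standard form, with kernel $I\oplus\Lambda^{m_{0}-1}$; this kernel is the image of $\partial$, hence is generated by $m_{1}$ elements, and $I$ is one of its quotients, so $I$ is generated by at most $m_{1}$ elements. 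Thus $m_{1}\geq\delta(\pi_1(M))$, and $p(H)\geq m_{0}+m_{1}\geq 1+\delta(\pi_1(M))\geq\delta(\pi_1(M))$.

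The algebra is easy; the real work --- and the main obstacle --- is the Floer-theoretic foundation in the merely weakly monotone case: constructing the $\pi_1$-twisted, energy-filtered complex so that the differential is finite and the whole construction is independent of the auxiliary choices, proving the Morse comparison so that the homology is genuinely $H_*(\widetilde M)$ in the expected degrees, and controlling the interaction between the Novikov completion coming from $\pi_2(M)$ and the group-ring twist coming from $\pi_1(M)$ --- in particular verifying that completing does not lower the minimal number of generators of the augmentation ideal, so that the count survives over $\Lambda$ and not only over $\Z[\pi_1(M)]$. A further point requiring care is the $\Z/2N$ periodicity of the grading in the monotone case, which forces one either to work with a genuinely $\Z$-graded model or to upgrade the two-term argument above to one using the whole complex.
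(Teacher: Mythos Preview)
The paper does not prove this theorem. It is stated with explicit attribution to Ono and Pajitnov and cited from their preprint, with no argument given beyond the one-line remark that they ``use the Floer complex with local coefficients to extend these constraints to the Hamiltonian setting.'' There is therefore no proof in the paper against which to compare your proposal.

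That said, your sketch is a fair outline of the Ono--Pajitnov strategy, and the list of obstacles you give at the end is where the actual content lies. Two small points. First, in your second step you describe the degree-zero homology as ``the free rank-one $\Lambda$-module attached to the trivial $\pi_1(M)$-module $\Z$''; the trivial module $\Z$ is of course \emph{not} free over $\Lambda$, and that is precisely the point --- its failure to be free is what forces the resolution $\Lambda^{m_1}\to\Lambda^{m_0}\to\Z\to 0$ to have $m_1\geq\delta(\pi_1(M))$. Second, your final chain $p(H)\geq m_0+m_1\geq 1+\delta(\pi_1(M))$ is one unit sharper than the inequality as stated; this is indeed what the cited paper obtains, but it relies on a single geometric orbit not contributing to both $m_0$ and $m_1$, which is exactly the grading-periodicity issue you flag in your last sentence and which needs to be handled before the sharper conclusion is safe.
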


\medskip

Similarly to the stable Morse setting, the points of view of this theorem
and theorem \ref{thm:RoughEnoughPoints} are essentially different~: the
former focuses on the number of geometric fixed points, while the latter
associates possibly several generators to the same geometric orbit to
overcome an eventual lack of generators and still recover the fundamental
group. 


\subsection{Organization of the paper}

In the second section of the paper (the first is this introduction), the
main definitions, statements and technical tools are presented. The third
section is dedicated to the comparison of Morse and Floer loops, and the
proof of theorem \ref{thm:RoughSurjectivity}. The fourth section is
devoted to the description of the relations, and the fifth to the proof
of the application (theorem \ref{thm:RoughEnoughPoints}) and theorem
\ref{thm:RoughAtLeastOneOrbit}. Finally, the last section is a
  sketch without proofs of the construction in the Stable Morse setting.

\bigskip

This work would not exist without the crucial help of a few people. I am
particularly thankful to O.~Cornea, whose deep topological insight and
generosity nourished me for years, to J.-Y.~Welschinger and B.~Chantraine
to whom I am indebted for the keystone of this paper, which is the notion
of augmentation, to A.~Oancea who served as a compass to me and to
M.~Damian who also owns a large part of this work. Finally, I'm
particularly grateful to A.V. Duffrène who indirectly but deeply
influenced the birth of this paper.

\section{Main definitions and  statements.}\label{sec:MainStatements}

\subsection{Preliminaries}\label{sec:Preliminaries}

Let $(M,\omega)$ be a $2n$-dimensional connected compact symplectic
manifold without boundary. For technical reasons, $M$ will be supposed to
be either 
\begin{itemize}
\item 
  symplectically aspherical, which means $\omega$ vanishes on the image
  of the Hurewicz homomorphism $\pi_{2}(M)\to H_{2}(M)$, or
\item 
  monotone, which means $c_{1}(TM)$ and $\omega$ are proportional by a
  positive constant on the image of the Hurewicz homomorphism
  $\pi_{2}(M)\to H_{2}(M)$.
\end{itemize}

These assumptions will allow us to easily
\begin{itemize}
\item 
  avoid the transversality issues related to the multiply covered
  negative curves,
\item
  avoid bubbles on $0$ and $1$-dimensional moduli spaces,
\item
  ensure finiteness of the number of (lifted) orbits of given
  Conley-Zehnder index.
\end{itemize}

\smallskip

Given a Hamiltonian function $H:M\times\S^{1}\to\R$, we let $X_{H}$ be
the associated Hamiltonian vector field, $\phi^{t}_{H}$ its flow, and
$\Orbits(H)$ the set of its contractible $1$-periodic orbits.

To handle the index computation when $c_{1}(TM)$ does not vanish on
$\pi_{2}(M)$, we consider the covering $\CovOrbits(H)$ associated to the
group $\pi_{2}(M)/\ker c_{1}$. It is obtained from $\Orbits(H)$ by
adjoining a capping class to the orbit in the
following way~:
\begin{equation}
  \label{eq:CovOrbits}
\CovOrbits(H) = \faktor{\{(\gamma,\bar\gamma), 
\gamma\in\Orbits(H), \bar\gamma:D\to M,
\bar\gamma_{|\partial D}=\gamma\}}{\sim}
\end{equation}
where $D$ is the $2$-disc and
$(\gamma,\bar\gamma)\sim(\gamma',\bar\gamma')$ iff $\gamma=\gamma'$ and
$\mu_{CZ}(\bar\gamma)=\mu_{CZ}(\bar\gamma')$ ($\mu_{CZ}$ denoting the
Conley-Zehnder index). 
  
Notice that this last equality implies that the two cappings also have
the same symplectic area~: glued along their boundary, $\bar\gamma$ and
$\bar\gamma'$ with the reversed orientation form a sphere $S$ with
vanishing first Chern class, and because of our asphericity or
monotonicity assumption, $\iint_{S}\omega=0$, which means that
$\bar\gamma$ and $\bar\gamma'$ have the same symplectic area. As a
consequence, both the Conley-Zehnder index and the symplectic area are
well defined for equivalence classes of cappings.

In the sequel, $\CovOrbits(H)$ will completely replace $\Orbits(H)$ and
no explicit reference to the covering will be made anymore. In
particular, what we call a Hamiltonian orbit from now on will in fact be
a lift of such an orbit to $\CovOrbits(H)$.

Each element $x$ in $\CovOrbits(H)$ then has a well defined
Conley-Zehnder index $\mu_{CZ}$.
For convenience, we shift the Conley-Zehnder index by $n$ and let
$$
|x| = \mu_{CZ}(x) + n.
$$
The set of orbits $\CovOrbits(H)$ splits according to this index, and we let
$$
\CovOrbits_{k}(H) = \{x\in\CovOrbits(H),\ |x|=k\}.
$$

\bigskip

Given a (possibly time dependent) $\omega$-compatible almost complex
structure $J$, we are interested in the Floer moduli spaces and some
classical variants of such that we describe below. Recall the Floer
equation for a map $u:\R\times\S^{1}\to M$ is the following~:
\begin{equation}
  \label{eq:Floer}
  \frac{\partial u}{\partial s}+J_{t}(u)\Big(\frac{\partial u}{\partial t}
  -X_{H}(t,u)\Big)=0.
\end{equation}
Moreover, we fix once for all a
smooth function $\cutoff: \R\to[0,1]$ such that
$$
\begin{cases}
\cutoff(s)=1 & \text{ if } s\leq-1\\
\cutoff(s)=0 & \text{ if } s\geq 0
\end{cases},
$$
and use it to cutoff the Hamiltonian term of the Floer equation on one or
both ends of the cylinder by considering the equation 
\begin{equation}
  \label{eq:StripedFloerEquation}
\tag{$F_{i}$}
\frac{\partial u}{\partial s}+J_{\chi_{i}(s)t}(u)\Big(\frac{\partial u}{\partial t}
-\chi_{i}(s)\ X_{H}(t,u)\Big)=0
\end{equation}
for different functions $\chi_{i}:\R\to[0,1]$, $i=1,2,3$ and
  $i=(4,R)$, derived from $\cutoff$, namely (see figure
\ref{fig:CutOffFunctions})
\begin{enumerate}
\item \label{it:Floer}
$\chi_{1}\equiv 1$ defines the usual Floer equation, 
\item\label{it:capoff}
$\chi_{2}(s)=\cutoff(s)$ defines the ``lower capping equation'', 
\item\label{it:capon}
$\chi_{3}(s)=\cutoff(-s)$ defines the ``upper capping equation'', 
\item\label{it:sphere}
$\chi_{4,R}(s)=\cutoff(s-R)\cutoff(-s-R)$ defines ``$R$-perturbed sphere
equation''. 
\end{enumerate}

\begin{figure}[!ht]
  \centering
  \includegraphics[width=10cm]{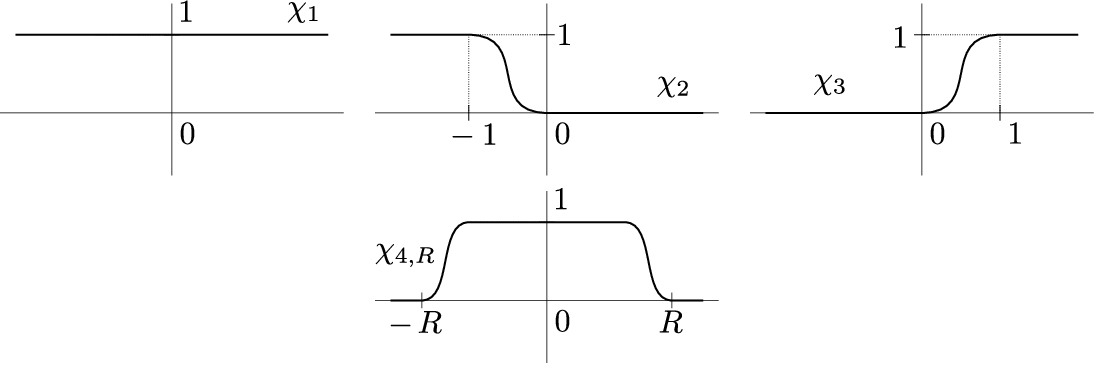}
  \caption{Cutoff functions}
  \label{fig:CutOffFunctions}
\end{figure}

In ($F_{4,R}$), $R$ is a real parameter, but notice that for
$R\leq 0$, the equation has no Hamiltonian term anymore and does not
depend on $R$~: $R$ will hence be considered in $[0,+\infty)$.

Recall that the energy of a solution $u$ of this equation is defined as
$$
 E(u) = \iint\omega(\frac{\partial u}{\partial s}, 
                    \frac{\partial u}{\partial t}-\chi_{i}(s)X_{H}(u))dsdt
 = \iint\Vert \frac{\partial u}{\partial s} \Vert^{2}_{\chi_{i}(s)t}dsdt,
$$
where $\Vert.\Vert_{t} = \omega(.,J_{t}.)$. Solutions of finite energy of
this equation have converging ends, either to a point by the classical
removal of singularities argument (\cite{AudinLafontaine},\cite{McDS}) if
the Hamiltonian term is cut off on this end, or to a Hamiltonian orbit if
not (\cite{Floer5}). In the former case, considering the end as a
neighborhood of $0$ in $\C\setminus\{0\}$, the map $u$ extends
holomorphically through $0$, and the equations $(F_{i})$ above could
equivalently be considered as defined on the sphere $\CP^{1}$ with $2$,
$1$ or no puncture (see for instance \cite{OhZhu} for a more uniform
description of these equation, or \cite{McDS} chapter 8 for the case
without punctures, i.e. equation $(F_{4,R})$ with fixed $R$). Anyway, on
an end where the Hamiltonian term is cut off, the limit value will be
denoted by $u(+\infty)$ or $u(-\infty)$. We abusively but conveniently
write that such a trajectory ends at the $\emptyset$ symbol to describe
the fact that this limit point is not constrained.

We are interested in the moduli spaces described below and depicted on
figure \ref{fig:FloerModuliSpaces}. Let $\star$ be a point in $M$, and
$\U$ be the space of smooth maps
$u:\R\times\S^{1}\to M$ that have finite energy i.e. such that
$\iint\Vert\frac{\partial u}{\partial s}\Vert^{2}dsdt<+\infty$. If $a$ is
an oriented disc, let $\bar{a}$ denote the disc with opposite
orientation, and if $b$ is another disc or tube having the same boundary
as $a$ , let $\bar
  a\sharp b$ denote the gluing of the two. 
\begin{align}
  \Mint(y,x)&=\faktor{\{u\in\U, (F_{1}),
\substack{\lim_{s\to-\infty}u(s,\cdot)=y\\\lim_{s\to+\infty}u(s,\cdot)=x}, 
   \text{ and } [y\sharp u\sharp\bar{x}] = 0\}}{\R}\\
  \Mint(y,\emptyset)&=\{u\in\U, (F_{2}), \lim_{s\to-\infty}u(s,\cdot)=y, 
   \text{ and } [y\sharp u] = 0\}\\
  \Mint(\star,x)&=\{u\in\U, (F_{3}), 
 \substack{\lim_{s\to-\infty}u(s,\cdot)=\star\\\lim_{s\to+\infty}u(s,\cdot)=y}
\text{ and } [u\sharp\bar{x}] =0\}\\
  \Mint(\star,\emptyset)&=\{(R,u)\in[0,+\infty)\times\U, (F_{4,R}), \lim_{s\to-\infty}u(s,\cdot)=
    \star \text{ and } [u] = 0\}
\end{align}
where the brackets denote classes in $\pi_{2}(M)/\ker c_{1}$, and their
vanishing expresses the compatibility of the trajectory $u$ with the
prescribed lifts of its ends to the covering space $\CovOrbits(H)$.

Notice that in the last case, the parameter $R$ is allowed to vary, and that
the moduli space $\Mint(\star,\emptyset)$ is endowed with the map
$\Mint(\star,\emptyset)\xrightarrow{\pi}[0,+\infty)$ given by
$\pi(u,R)=R$.
\begin{figure}[!ht]
  \centering
  \includegraphics[scale=.6]{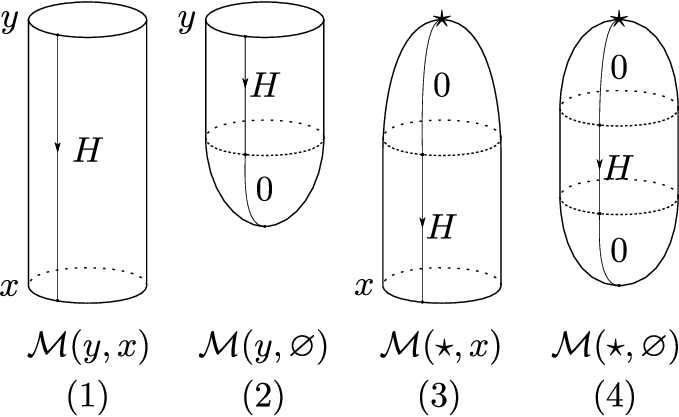}
  \caption{Floer moduli spaces.}
  \label{fig:FloerModuliSpaces}
\end{figure}

The three last types of moduli spaces are used in \cite{PSS} (in
conjunction with a Morse function that we do not use here) to define the
PSS homomorphisms and compare Morse and Floer homologies.

Since the elements of $\Mint(x,\emptyset)$ for $x\in\CovOrbits_{0}(H)$
can be used to define an augmentation on the
Floer complex, we use the following terminology~:
\begin{definition}
  Given an index $0$ Hamiltonian periodic orbit $x\in\CovOrbits_{0}(H)$,
  a capping $\alpha\in\M(x,\emptyset)$ is called an ``augmentation'' of
  $x$, and the couple $(x,\alpha)$ an augmented orbit.  
\end{definition}

It is well known (see remark \ref{rk:StarIsRegular} below) that for a
generic choice of $(H,J,\star,\cutoff)$, these moduli
spaces  are smooth manifolds whose dimension
is prescribed by the end constraints and the
  homotopy class of the tube. 
%
%

\begin{remark}\label{rk:StarIsRegular}
  The transversality issues for the three first moduli spaces are
  discussed in \cite{FHS}.  The last moduli space
  $\Mint(\star,\emptyset)$ is somewhat special with this respect, one
  reason being that for $R=0$, it involves constant maps, for which the
  key argument of being ``somewhere injective'' fails. Transversality for
  the constant maps is particularly relevant to us since it implies that
  such curves are regular for the projection $\M(\star,\emptyset)\to\R$,
  which in turn means that they can be locally ``followed'' as $R$ varies
  (see proposition \ref{prop:FloerLoopStepsStar}). 

  The following proposition ensures that constant spheres are indeed
  regular (for any almost complex structure).
  \begin{proposition}\label{prop:StarIsRegular}
    Recall the projection
    $\Mint(\star,\emptyset)\xrightarrow{\pi}[0,+\infty)$. For $R=0$,
    $\pi^{-1}(R)$  consists in the single point $(u_{\star},0)$ where
    $u_{\star}$ is the constant map at $\star$. This solution is regular,
    which means that (in the suitable functional spaces) the equation
    ($F_{4,0}$) is a submersion at this point. In particular, $0$ is a
    regular value of $\pi$.
  \end{proposition}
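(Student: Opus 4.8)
The plan is to handle the two assertions in turn; the second is a form of automatic transversality forced by the fact that the curve is constant.

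For the first assertion, recall (as noted just after the definition of the functions $\chi_{i}$) that for $R\leq 0$ the equation $(F_{4,R})$ carries no Hamiltonian term, so $(F_{4,0})$ is simply the Cauchy--Riemann equation $\partial_{s}u+J_{0}(u)\partial_{t}u=0$ for the \emph{time independent} almost complex structure $J_{0}:=J_{|t=0}$. A finite energy solution has both ends converging to points, hence extends by removal of singularities to a $J_{0}$-holomorphic sphere $\CP^{1}\to M$ whose class satisfies $[u]=0$ in $\pi_{2}(M)/\ker c_{1}$. Then $c_{1}$ vanishes on $[u]$, and by monotonicity (or trivially in the aspherical case) so does $\omega$, whence $E(u)=\omega([u])=0$ and $u$ is constant. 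The constraint $u(+\infty)=\star$ pins this constant to $\star$, so $u=u_{\star}$ and $\pi^{-1}(0)=\{(u_{\star},0)\}$.

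For the regularity statement I would realise $\M(\star,\emptyset)$ near $(u_{\star},0)$ as the zero set of a section of a Banach bundle: on a neighbourhood of $(u_{\star},0)$ in $[0,+\infty)\times W^{1,p}(\CP^{1},M)$ with $p>2$ (the cylinder being compactified, with a marked point at the $+\infty$ end), send $(R,u)$ to the pair formed by the left-hand side of $(F_{4,R})$, viewed in $L^{p}(\Lambda^{0,1}\CP^{1}\otimes u^{*}TM)$, and by $\eval_{+\infty}(u)-\star\in T_{\star}M$ --- the constraint $[u]=0$ being locally vacuous. One must show the differential of this section at $(u_{\star},0)$ is onto $L^{p}(\Lambda^{0,1}\CP^{1}\otimes u_{\star}^{*}TM)\oplus T_{\star}M$. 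The key point is that $u_{\star}$ is constant, so $du_{\star}=0$ and every zeroth order term of the linearised $(F_{4,0})$-operator, being proportional to $du$, vanishes: the $u$-derivative of our section is the bare operator $D_{u_{\star}}=\bar\partial$ on sections of the trivial bundle $\underline{T_{\star}M}\cong\underline{\C^{n}}$ over $\CP^{1}$. By the classical fact $H^{1}(\CP^{1},\mathcal{O}^{\oplus n})=0$, $D_{u_{\star}}$ is surjective with kernel the constant sections, which $\xi\mapsto\xi(+\infty)$ identifies with $T_{\star}M$. Hence, given any $(\eta,v)$ in the target, one first solves $D_{u_{\star}}\xi_{0}=\eta$ and then adds the constant section $v-\xi_{0}(+\infty)\in\ker D_{u_{\star}}$ to obtain $\xi$ with $D_{u_{\star}}\xi=\eta$ and $\xi(+\infty)=v$: the differential is onto, without even using the $R$-direction, which is precisely the submersion claim. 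By openness of surjectivity the section stays transverse for small $R\geq 0$, so $\M(\star,\emptyset)$ is a manifold with boundary near $(u_{\star},0)$, with $\pi^{-1}(0)$ its boundary.

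There is no serious obstacle here: the only care needed is the functional-analytic bookkeeping --- choosing the right completion for the compactified cylinder together with the marked-point constraint, and checking that the extra terms of the Floer linearisation genuinely drop out at a constant map --- and recalling the automatic transversality input $H^{1}(S^{2},\mathcal{O})=0$, which is what makes a constant sphere with one constrained marked point regular irrespective of $J$.
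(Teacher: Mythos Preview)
Your argument is correct and follows essentially the same route as the paper's sketch: reduce $\pi^{-1}(0)$ to constant $J_{0}$-spheres, then observe that the linearisation at $u_{\star}$ is the standard $\bar\partial$ on the trivial bundle $\underline{\C^{n}}\to\CP^{1}$, which is surjective. The only cosmetic differences are that the paper concludes surjectivity via an index count (kernel is the $2n$-dimensional space of constants, which equals the Fredholm index) rather than your equivalent cohomological statement $H^{1}(\CP^{1},\mathcal{O}^{\oplus n})=0$, and that you package the point constraint $u(+\infty)=\star$ explicitly into the section, whereas the paper leaves that last step implicit.
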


\begin{proof}[Sketch of proof]
  Glossing over the definition of the functional spaces in use, observe
  that the problem can be reformulated in terms of maps from $\CP^{1}$ to
  $M$ in the trivial homology class. For $R=0$, equation $(F_{4,R})$ 
  simply becomes
  \begin{gather}
  Du + J_{0}(u)\, Du\, i = 0\label{eq:J-holo}.
  \end{gather}
  Points of $\M(\star,\emptyset)$ lying above $R=0$ are hence
  $J_{0}$-holomorphic spheres in the trivial homology class and are
  therefore constant. The additional condition $u(0)=\star$ implies
  $\pi^{-1}(0)=\{(u_{\star},0)\}$.

  The linearization (with respect to $u$) of the left hand term in
  \eqref{eq:J-holo} at the constant map $u_{\star}$ leads to a linear
  operator $L$ defined for maps from $\CP^{1}$ to a fixed
  $\C^{n}=T_{\star}M$ of the form
  \begin{gather}
     L\dot{u} = D\dot{u} +  J_{0} D\dot{u}\, i, \label{eq:LinFloer}
  \end{gather}
  where $J_{0}=J_{0}(\star)$ is constant. The kernel of $L$ consists of
  the holomorphic spheres in $\C^{n}$ and hence of the constants. It is
  therefore $2n$-dimensional and since $2n$ is also the index of $L$, this
  implies that $L$ is surjective, which easily implies the required
  submersion property.
\end{proof}
\end{remark}

In particular, under a generic choice of $(H,J,\star,\cutoff)$, we have ~:
\begin{align*}
  \dim\Mint(y,x)&=|y|-|x|-1 \\ 
  \dim\Mint(y,\emptyset)&=|y|\\  
  \dim\Mint(\star,x)&=-|x|\\  
  \dim\Mint(\star,\emptyset)&=1
\end{align*}
From now on, $(H,J,\star,\cutoff)$ will be supposed to be chosen so
that all these moduli spaces are indeed cut out
transversely.

Moreover, all these moduli spaces are compact up to breaking or bubbling
off of spheres, and we let
\begin{align*}
  \M(x,y)            &=\overline{\Mint(x,y)            }& 
  \M(x,\emptyset)    &=\overline{\Mint(x,\emptyset)    }\\  
  \M(\star,y)        &=\overline{\Mint(\star,y)        }&  
  \M(\star,\emptyset)&=\overline{\Mint(\star,\emptyset)}
\end{align*}
be the Gromov-Floer compactifications of the previous moduli spaces.

\begin{remark}
  Notice however that $\M(\star,\emptyset)$ has a ``built-in'' (i.e.
  already present in $\Mint(\star,\emptyset)$) boundary component, , that
  does not come from the Gromov compactification but from the limit case
  $R=0$.
\end{remark}

In all this paper, only $0$ and $1$-dimensional moduli spaces will be
considered, and no bubbling of sphere can occur on such moduli spaces.
This means they will all be compact up to breaking and smooth. 

In particular, each $0$-dimensional moduli space $\M(y,x)$ is compact, and
hence finite, and we let 
$$
\Cardabs(\M(y,x))=\sum_{\gamma\in\M(y,x)}(+1)
$$
denote the cardinality of
$M(y,x)$. 
\begin{remark}
 It is usual, when working with pseudo-holomorphic curves or Floer
 trajectories, to consider the algebraic number $\Cardalg\M(x,y)$ of
 elements in a $0$-dimensional moduli space, i.e. to take signs coming
 from some orientation of the moduli space into account. We stress
 however that this definition refers to the absolute number, i.e. the sum
 where each element counts for $+1$.
\end{remark}

\subsection{Floer steps and loops}

Given a configuration of two consecutive isolated Floer trajectories
$(\beta,\alpha) \in \M(y,x) \times \M(x,\emptyset)$ with $x \in
\CovOrbits_{0}(H)$ and $y \in \CovOrbits_{1}(H) \cup \{\star\}$, the
gluing construction (\cite{Floer4}, \cite{McDS}) gives rise to a one
dimensional family of trajectories starting with $(\beta,\alpha)$ and
ending at some other broken configuration $(\beta',\alpha') \in \M(y,x')
\times \M(x',\emptyset)$. This relation between $(\beta,\alpha)$ and
$(\beta',\alpha')$ will be denoted by
\begin{equation}
  \label{eq:GlueTo}
(\beta,\alpha)\glueto(\beta',\alpha').  
\end{equation}
\begin{remark}
  Recall the gluing construction defines an homeomorphism between a
  neighborhood of the broken configuration $(\alpha,\beta)$ in the
  compactified moduli space $\M(y,\emptyset)$ and
  $\{(\beta,\alpha)\}\times[0,\epsilon)$ for some $\epsilon>0$. In
  particular, this proves that the compactification is a segment and not
  a circle, and hence that relation \eqref{eq:GlueTo} necessarily implies
  that $ (\beta,\alpha)\neq(\beta',\alpha')$.
\end{remark}

This ``move'' from one end of a moduli space to another described above in
$\M(y,\emptyset)$ makes sense for all kinds of configurations, and will be
the main ingredient of all the subsequent constructions. It therefore
deserves a general definition~:
\begin{definition}\label{def:FloerStep}
  A Floer step is an oriented connected component with non empty boundary
  of a $1$-dimensional moduli space.
\end{definition}

\begin{remark}
In particular, the same component defines two steps with opposite
orientations.  
\end{remark}

Depending on the type of moduli space under consideration, there are
several types of Floer steps. Floer loops will be built out of special
steps, called \emph{Floer loop steps}, which are depicted on figure
\ref{fig:FloerLoopSteps} and specified in the following definition~:
\begin{definition}\label{def:FloerLoopStep}
  A Floer loop step is a Floer step in some $\M(y,\emptyset)$ for
  $y\in\CovOrbits_{1}(H)$ or in $\M(\star,\emptyset)$.
\end{definition}

This somewhat abstruse definition is the heart of the construction and
deserves some comments. 

An enlightening point of view is that of  Morse theory.
Consider a function $f$ and a Riemannian metric $g$ on a
  manifold $M$ such that the pair $(f,g)$ is Morse-Smale. Starting with any generic loop in the manifold and
  pushing it down by the flow of $f$ deforms it into the
concatenation of elementary paths, called ``Morse steps'', that
  consist in travelling once, in one or the other direction,
along the unstable manifold of an index $1$ critical point.

It turns out that these steps can be interpreted
from the moduli space point of view~: let $y$ be an index $1$ critical
point and $W^{u}(y)$ its unstable manifold. To a point $p$ in
the unstable manifold is associated a path, namely the
piece of Morse trajectory from $y$ to $p$, and there is a one to one
correspondence between such trajectory pieces and the unstable manifold
(see \cite{BC} for a detailed presentation of this point of view, and a
nice compactification of the unstable manifold derived from it). More
precisely, define an ``interrupted'' Morse trajectory as a solution of
the following modified Morse equation
\begin{equation}\label{eq:InterruptedMorse}
  \frac{d\gamma}{ds} = -\chi(s)\nabla f(\gamma(s))
\end{equation}
where the cutoff function $\chi$ is the same as the one used in
($F_{2}$), i.e. a smooth decreasing function such that $\chi(s)=1$ for
$s\leq-1$ and $\chi(s)=0$ for $s\geq0$.

Using the same notation as in the Floer setting, let
\begin{equation}
  \label{eq:MorseInterruptedModuliSpace}
\MintMorse(y,\emptyset)=\{\gamma:\R\to M, 
\eqref{eq:InterruptedMorse} \text{ and } \lim_{s\to-\infty}\gamma(s)=y\}.
\end{equation}
This space is naturally endowed with an evaluation map (recall the
trajectories are constant for $s\geq0$ so $\gamma(+\infty)=\gamma(0)$),
$$
\begin{array}{ccl}
  \MintMorse(y,\emptyset)&\to&W^{u}(y)\subset M\\
  \gamma&\mapsto&\gamma(+\infty)
\end{array},
$$
which is one to one and provides an identification between
$\MintMorse(y,\emptyset)$ and $W^{u}(y)$. 

\begin{figure}[!ht]
  \centering
  \includegraphics[scale=.4]{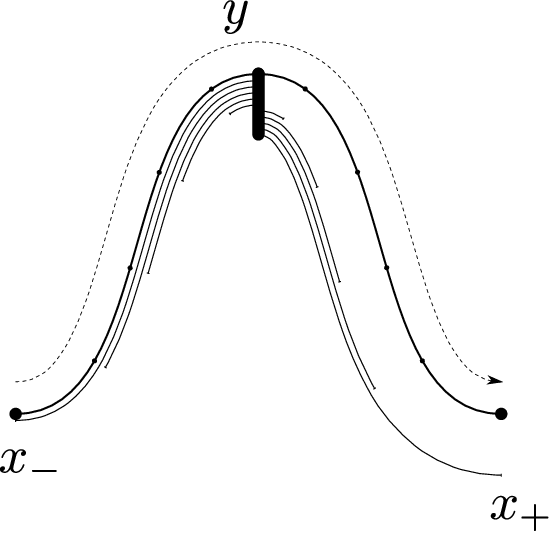}
  \caption{The unstable manifold of an index $1$ Morse critical point seen as a $1$-dimensional moduli space of ``interrupted'' trajectories.}
  \label{fig:MorseStep}
\end{figure}

Moreover, $\MintMorse(y,\emptyset)$ has a natural compactification
$\MMorse(y,\emptyset)$ as a $1$-dimensional segment whose ends are the two
``broken'' configurations $(\gamma_{\pm},\bar{x}_{\pm})$ where
\begin{itemize}
\item 
  $\gamma_{+}$ and $\gamma_{-}$ are the two Morse trajectories rooted at $y$,
\item
  $x_{\pm}$ is the index $0$ critical point such that
  $x_{\pm}=\lim_{s\to+\infty}\gamma_{\pm}$,
\item
  $\bar{x}_{\pm}$ is the constant solution of \eqref{eq:InterruptedMorse}
  at $x_{\pm}$. It is the one and only one augmentation of $x_{\pm}$.
\end{itemize}
The evaluation map extends to this compactification and defines a path
running along $W^{u}(y)$ from $x_{-}$ to $x_{+}$, which is the
``Morse loop step'' associated to $y$.

From this point of view, a Floer loop step through an index $1$ periodic
orbit is the exact counterpart of a Morse loop step through an index 1
critical point.

\begin{remark}
  One noticeable difference between the Morse and Floer settings however,
  is that the Floer moduli space $\M(y,\emptyset)$ need not be
  connected~: each connected component can be interpreted as being one
  ``Floer unstable manifold'' of the orbit $y$, which hence has to be
  considered as as many virtually distinct orbits.
\end{remark}

\begin{remark}
  For orbits $y$ of higher index, the components of the moduli space
  $\M(y,\emptyset)$ can still be regarded as ``Floer unstable manifolds''
  of $y$. However,   there
  is no control a priori on the topology of such a space~: it need not be
  connected, nor need the connected components be balls.
\end{remark}

Similarly, assuming by genericity that $\star$ is not critical for $f$,
the Morse counterpart of the space $\M(\star,\emptyset)$ is the
collection of segments of the (unique) trajectory passing through
$\star$, running from $\star$ down to some arbitrary point $p$ below it
along this trajectory. It is in one to one correspondence with (the
closure of) the piece of trajectory flowing from $\star$  down to the
index $0$ critical point below it.

\bigskip

\begin{figure}[!ht]
  \centering
  \includegraphics[width=11cm]{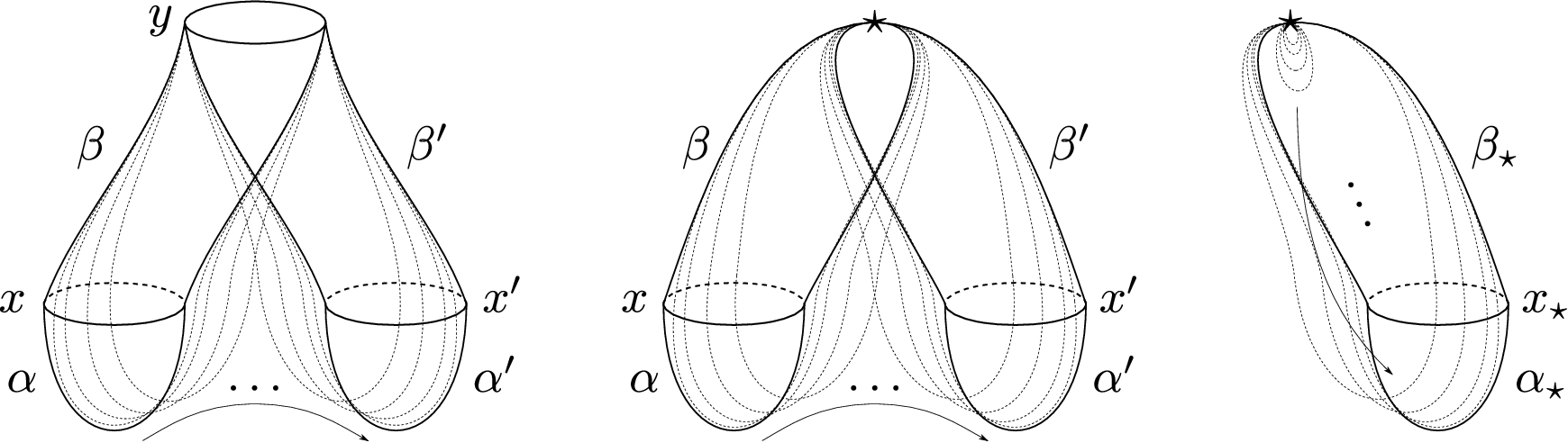}
  \caption{The three kinds of Floer loop steps.}
  \label{fig:FloerLoopSteps}
\end{figure}

\begin{figure}[!ht]
  \centering
  \includegraphics[scale=.5]{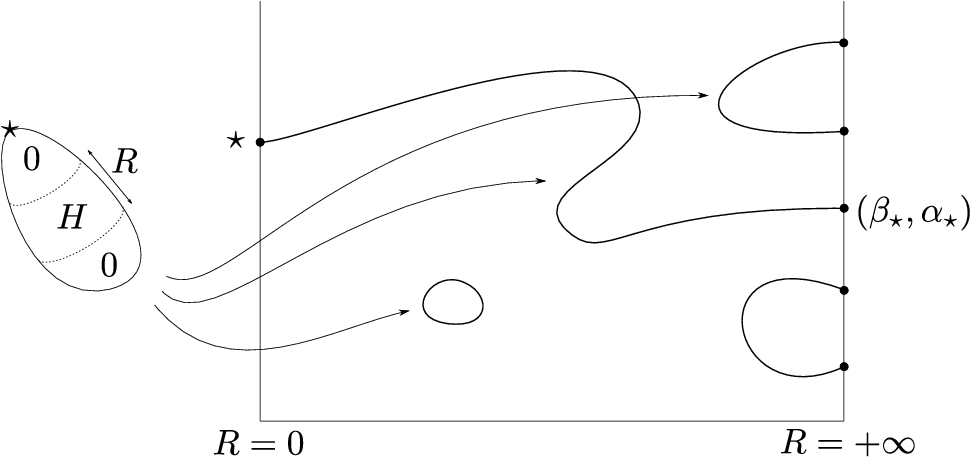}
  \caption{The moduli space $\M(\star,\emptyset)$.}
  \label{fig:BasePoint}
\end{figure}

Definitions \ref{def:FloerStep} and \ref{def:FloerLoopStep} are not very
explicit and a more usable description of a step is obtained by
specifying its ends~:
\begin{proposition}\label{prop:FloerLoopStepsOrbit}
For $y\in\CovOrbits_{1}(H)$, a Floer loop step through $y$ is characterized
by a quadruple $ (\alpha,\beta,\beta',\alpha')$ with
$\alpha\in\M(x,\emptyset)$, $\beta\in\M(y,x)$, $\beta'\in\M(y,x')$, and
$\alpha'\in\M(x',\emptyset) $ for some $x,x'\in\CovOrbits_{0}(H)$ such
that
$$
(\beta,\alpha)\glueto(\beta',\alpha').
$$
\end{proposition}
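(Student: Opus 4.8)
The plan is to unwind the definitions and show that the data of a Floer loop step through $y$ — an oriented connected component with non-empty boundary of the $1$-dimensional moduli space $\M(y,\emptyset)$ — is exactly equivalent to the data of its two boundary endpoints, each of which is a broken configuration of the form $(\beta,\alpha)$, plus the gluing relation linking them. First I would invoke the standard compactness statement recalled in Section \ref{sec:Preliminaries}: for $y\in\CovOrbits_1(H)$ the moduli space $\Mint(y,\emptyset)$ is a $1$-manifold (its dimension is $|y|=1$), no sphere bubbling occurs, so $\M(y,\emptyset)$ is compact up to breaks. A break of a trajectory in $\M(y,\emptyset)$, by the usual Gromov–Floer description, is a configuration where the trajectory splits at the orbit $y$ into an honest Floer cylinder from $y$ to some $x$ together with a capping trajectory from $x$; since the only available positive-index end is $y$ itself and the index drops by at least $1$ across a break, a dimension count forces $x\in\CovOrbits_0(H)$ and forces exactly one break, so a boundary point of $\M(y,\emptyset)$ is precisely a pair $(\beta,\alpha)$ with $\beta\in\M(y,x)$, $\alpha\in\M(x,\emptyset)$, both rigid (i.e. elements of $0$-dimensional moduli spaces).

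Next I would argue that a connected component $C$ of $\M(y,\emptyset)$ with non-empty boundary is a compact $1$-manifold with boundary, hence a closed interval, so it has exactly two endpoints; call them $(\beta,\alpha)$ and $(\beta',\alpha')$, with the orientation of the step determining which is the ``start'' and which the ``end''. The gluing theorem (as recalled in the remark after \eqref{eq:GlueTo}) says that near $(\beta,\alpha)$ the moduli space $\M(y,\emptyset)$ is homeomorphic to $\{(\beta,\alpha)\}\times[0,\epsilon)$, and likewise near the other end; in particular the component $C$ emanating from $(\beta,\alpha)$ is uniquely determined, and its other endpoint is by definition the configuration $(\beta',\alpha')$ with $(\beta,\alpha)\glueto(\beta',\alpha')$. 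Conversely, any quadruple $(\alpha,\beta,\beta',\alpha')$ satisfying $(\beta,\alpha)\glueto(\beta',\alpha')$ arises this way, since the relation $\glueto$ was \emph{defined} by ``start and end of a one-dimensional family obtained by gluing.'' This establishes the bijection between Floer loop steps through $y$ and such quadruples.

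The main technical point to be careful about — and what I expect to be the only real obstacle — is the dimension/index bookkeeping at a break: one must rule out breaks occurring at interior orbits of index other than $0$, and in particular rule out multiply-broken limits, and one must confirm that both pieces $\beta$ and $\beta'$ of a boundary configuration land in $\CovOrbits_0(H)$ rather than, say, one piece being of index $0$ and the other degenerate. This is handled exactly as in ordinary Floer theory by the additivity of the (shifted) index under breaking together with the transversality hypothesis on $(H,J,\star)$: a broken configuration in the boundary of a $1$-dimensional $\M(y,\emptyset)$ consists of a cylinder in $\M(y,x)$ with $\dim=|y|-|x|-1=0$, hence $|x|=0$, glued to a capping in $\M(x,\emptyset)$ with $\dim=|x|=0$, hence rigid; there is no room for a second break. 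The orientation of the component furnishes the ordering of the quadruple as written, and the remark after \eqref{eq:GlueTo} guarantees $(\beta,\alpha)\neq(\beta',\alpha')$, so the two endpoints are genuinely distinct, completing the characterization.
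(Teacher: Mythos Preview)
Your proposal is correct and is precisely the argument the paper has in mind; the paper in fact gives no separate proof of this proposition, treating it as an immediate reformulation of the gluing discussion just before Definition~\ref{def:FloerStep} (where the relation $\glueto$ is introduced) together with the compactness and dimension statements from Section~\ref{sec:Preliminaries}. Your write-up simply spells out explicitly the dimension count ruling out breaks at orbits of index $\neq 0$ and the compact $1$-manifold structure that the paper leaves implicit.
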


The situation of Floer loop steps through $\star$ is slightly different,
as there is one special step that does not look like the others.

Recall that the moduli space $\Mint(\star,\emptyset)$ comes with a
projection to the non negative reals
$$
\begin{array}{ccc}
  \Mint(\star,\emptyset)&\xrightarrow{\pi}&[0,+\infty)\\
  (R,u)&\mapsto &R
\end{array}.
$$
This projection is proper, and extends continuously to a map
$\M(\star,\emptyset)\xrightarrow{\pi}[0,+\infty]$ where all the broken
configurations lie above $R=+\infty$. Moreover, the gluing construction
ensures that exactly one component of $\M(\star,\emptyset)$ ends at each 
broken configuration.

Observe now that the same holds over $R=0$~: exactly one component of
$\M(\star,\emptyset)$ ends at the constant map $(u_{\star},0)$. This is a
direct consequence of the regularity of this solution stressed in
proposition \ref{prop:StarIsRegular} (surjectivity of $L$ implies that
$\pi:\M(\star,\emptyset)\to\R$ is a submersion at $(u_{\star},0)$).

As a consequence, $\M(\star,\emptyset)$ has exactly one connected
component that relates $\{\star\}$ to a broken configuration, and all the
other components either have no boundary or relate two broken
configurations~:
\begin{equation}
  \label{eq:AlphaStar}
\exists! x_{\star}\in\CovOrbits_{0}(H),
\exists!(\beta_{\star},\alpha_{\star})\in\M(\star,x_{\star})\times\M(x_{\star},\emptyset),
 \quad 
(\beta_{\star},\alpha_{\star})\glueto\star
\end{equation}


\begin{proposition}\label{prop:FloerLoopStepsStar}
There are exactly one orbit $x_{\star}\in\CovOrbits_{0}(H)$
and one pair $(\beta_{\star},\alpha_{\star}) \in
\M(\star,x_{\star})\times\M(x_{\star},\emptyset)$ such that a Floer loop
step through $\star$ is 
\begin{itemize}
\item 
  either the special step $\star \glueto (\beta_{\star},\alpha_{\star})$ 
\item
  or characterized by a quadruple $ (\alpha,\beta,\beta',\alpha')$ with
  $\alpha\in\M(x,\emptyset)$, $\beta\in\M(\star,x)$,
  $\beta'\in\M(\star,x')$, and $\alpha'\in\M(x',\emptyset) $ for some
  $x,x'\in\CovOrbits_{0}(H)$ such that
  $$
  (\beta,\alpha)\glueto(\beta',\alpha').
  $$
\end{itemize}
\end{proposition}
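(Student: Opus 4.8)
The plan is to analyze the connected components of the compactified one-dimensional moduli space $\M(\star,\emptyset)$ and to show that its boundary components come in exactly the two flavors described. First I would recall what is known about this moduli space from the preliminaries: $\Mint(\star,\emptyset)$ is a one-dimensional manifold with the properness of $\pi$ towards $[0,+\infty)$, and $\M(\star,\emptyset)$ is its Gromov-Floer compactification, which (as noted, since no sphere bubbling occurs on one-dimensional moduli spaces in the monotone/aspherical setting) is compact up to breaks. A compact one-dimensional manifold with boundary is a disjoint union of circles and closed segments, so each connected component of $\M(\star,\emptyset)$ with non-empty boundary is a segment with two endpoints, and a Floer loop step through $\star$ is, by Definition \ref{def:FloerLoopStep}, such a component (with a choice of orientation).

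Next I would enumerate the possible endpoints of such a segment. An endpoint of a component of $\M(\star,\emptyset)$ is either (a) a point lying over $R=0$, which by Proposition \ref{prop:StarIsRegular} is exactly the constant solution $(u_\star,0)$, the ``built-in'' boundary — here I use that $\pi$ is a submersion at $(u_\star,0)$, so $(u_\star,0)$ is a genuine manifold-with-boundary point and the component emanating from it is unique; or (b) a broken configuration arising from the Gromov-Floer compactification. For dimension reasons — $\dim\M(\star,\emptyset)=1$ and all relevant moduli spaces are transverse — the only breakings that can occur at the boundary of a one-dimensional component are simple breaks into a pair $(\beta,\alpha)\in\M(\star,x)\times\M(x,\emptyset)$ with $x\in\CovOrbits_0(H)$, both factors rigid (zero-dimensional); longer breaks or breaks involving an intermediate orbit of index $\neq 0$ are excluded by the index/dimension count, and by our standing transversality hypotheses there is no contribution from sphere bubbles. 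Then the gluing theorem (\cite{Floer4},\cite{McDS}) says that near each such broken configuration $(\beta,\alpha)$ the compactified space is homeomorphic to a half-open interval, i.e.\ exactly one component of $\M(\star,\emptyset)$ ends at $(\beta,\alpha)$, and the relation $(\beta,\alpha)\glueto(\beta',\alpha')$ records that the other end of that component is the broken configuration $(\beta',\alpha')$. This is precisely the content of \eqref{eq:GlueTo} applied to $\M(\star,\emptyset)$.

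It then remains to combine these observations. A component with non-empty boundary has two endpoints, each of type (a) or (b). If one endpoint is of type (a), i.e.\ is $(u_\star,0)$, then the uniqueness in (a) forces there to be exactly one such component, its other endpoint is necessarily a broken configuration $(\beta_\star,\alpha_\star)\in\M(\star,x_\star)\times\M(x_\star,\emptyset)$ for a unique $x_\star\in\CovOrbits_0(H)$, and this is the ``special step'' $\star\glueto(\beta_\star,\alpha_\star)$; this is exactly assertion \eqref{eq:AlphaStar}, and I would either cite it or reprove it here in a sentence. If neither endpoint is of type (a), then both endpoints are broken configurations, and the component is characterized by the quadruple $(\alpha,\beta,\beta',\alpha')$ with $\alpha\in\M(x,\emptyset)$, $\beta\in\M(\star,x)$, $\beta'\in\M(\star,x')$, $\alpha'\in\M(x',\emptyset)$ and $(\beta,\alpha)\glueto(\beta',\alpha')$, together with the choice of which end is the ``source'' (the orientation). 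Conversely, any such data determines a component by the gluing theorem. This gives the claimed dichotomy.

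The main obstacle I expect is the dimension/transversality bookkeeping at the ends: one must be sure that the only boundary strata of a one-dimensional $\M(\star,\emptyset)$ are the built-in $R=0$ point and the simple once-broken configurations $\M(\star,x)\times\M(x,\emptyset)$ with $|x|=0$, and in particular that no broken configuration with a non-trivial sphere component, a break through an orbit of the ``wrong'' index, or a configuration at $R=+\infty$ with more than one Floer piece can occur. This is standard Gromov-Floer compactness and gluing theory under the monotone/aspherical hypothesis stated in Section \ref{sec:Preliminaries} (which was precisely invoked to rule out bubbling on low-dimensional moduli spaces and to ensure finiteness), together with the transversality already assumed for all the moduli spaces in play; but it is the place where care is needed, and I would spell out the index count $\dim\M(\star,x)+\dim\M(x,\emptyset)=-|x|+|x|=0$ to justify that exactly the rigid once-broken configurations appear.
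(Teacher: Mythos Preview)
Your proposal is correct and follows essentially the same route as the paper: the argument the paper gives is the discussion immediately preceding the proposition (culminating in \eqref{eq:AlphaStar}), which analyzes the boundary of the one-dimensional space $\M(\star,\emptyset)$ via the properness of $\pi$, the regularity of the constant solution at $R=0$ from Proposition~\ref{prop:StarIsRegular}, and standard gluing at the broken configurations over $R=+\infty$. Your write-up is somewhat more detailed (spelling out the index count and the exclusion of longer breaks), but the logic and the key inputs are identical.
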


\begin{remark}
  Considering loop steps entering the second case in the above statement
  might seem unnatural since, as already mentioned, in the Morse setting,
  only the special step $\star \glueto (\beta_{\star},\alpha_{\star})$
  does exist. In the Floer context however, as well as in the stable
  Morse setting where examples are much easier to produce (see section
  \ref{sec:StableMorse} and figure \ref{fig:StableMorseStep}), such steps
  might exist, and have to be taken into account.
\end{remark}

\begin{remark}
  Notice there are only finitely many Floer loop steps~:
  there are finitely many periodic orbits, and because of the
  monotonicity assumption, finitely many lifts of each can have index $0$
  or $1$, and finally, each $0$-dimensional moduli space is compact and
  hence finite. 
\end{remark}

\bigskip

Notice finally that Floer loop steps are oriented  and hence have
  a start and an end~: 
\begin{definition}
 With the notations of propositions \ref{prop:FloerLoopStepsOrbit} and
 \ref{prop:FloerLoopStepsStar}, a step $(\alpha,\beta,\beta',\alpha')$ is
 said to start at $\alpha$ and end at $\alpha'$. 

 Similarly, if one of the pairs $(\beta,\alpha)$ or $(\beta',\alpha')$
 is replaced by $\star$, the corresponding end is said to be $\star$ itself.
 
 Two loop steps are said to be consecutive if the end of the first is the
 start of the second.  
\end{definition}

\begin{figure}[!ht]
  \centering
  \includegraphics[scale=.6]{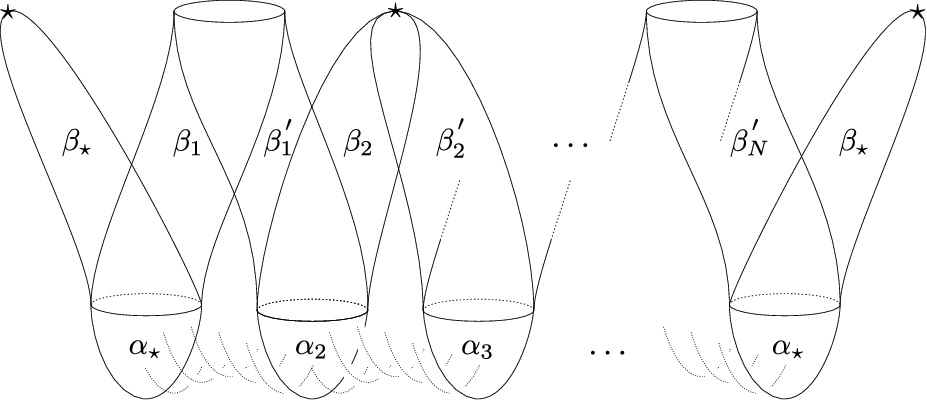}
  \caption{A Floer based loop.}
  \label{fig:FloerPath}
\end{figure}

\begin{definition}\label{def:FloerLoops}
A Floer based loop is a sequence of consecutive Floer loop steps starting
and ending at $\star$.
\end{definition}

In other words, a Floer based loop is a sequence
\begin{gather*}
(\star,\beta_{\star},\alpha_{\star}),
(\alpha_{\star},\beta_{1},\beta'_{1},\alpha_{2}),
(\alpha_{2},\beta_{2},\beta'_{2},\alpha_{3}),\dots,
(\alpha_{N},\beta_{N},\beta'_{N},\alpha_{\star}),
(\alpha_{\star},\beta_{\star},\star)
\intertext{such that (letting $\alpha_{1}=\alpha_{N+1}=\alpha_{\star}$)~:}
\forall i\in\{1,\dots, N+1\},\quad
(\beta_{i},\alpha'_{i})\glueto(\beta'_{i},\alpha_{i+1}), 
\end{gather*}

Let $\GenBrut{H}$ be the set of all Floer based loops. Notice it depends
on all the auxiliary data $(H,\star,J,\cutoff)$ but the
dependency on $J$ and $\cutoff$ is kept  implicit to reduce the
notation. It carries an obvious concatenation rule that turns it into a
semi-group.

It also carries obvious cancellation rules. More explicitly, if
$\sigma=(\alpha,\beta,\beta',\alpha')$ is a Floer loop step, define its
inverse $\sigma^{-1}$ to be the same step with the opposite orientation~:
$$
\sigma^{-1}=(\alpha',\beta',\beta,\alpha).
$$
Denote by $\sim$ the associated cancellation rules in $\GenBrut{H}$~:
$$
\sigma_{1}\dots \sigma_{i}\sigma^{-1}_{i}\dots\sigma_{N} \sim 
\sigma_{1}\dots \sigma_{i-1}\sigma_{i+1}\dots\sigma_{N}.
$$
The concatenation then endows the quotient space
\begin{equation}
  \label{eq:GenGp}
  \Gen{H}=\faktor{\GenBrut{H}}{\sim}.
\end{equation}
with a group structure.

\bigskip

A Floer loop step being a one parameter family of tubes, evaluation at
the $+\infty$ end of the tube defines a path in $M$ (an arbitrary
parameterization can be chosen for each step, since we are only
interested in the resulting homotopy class), and induces a map
\begin{equation}
  \label{eq:evalBrut}
  \GenBrut{H}\xrightarrow{\eval}
  \pi_{1}(M,\star).
\end{equation}
This map is compatible with both the concatenation and the
cancellation rules and hence induces a group homomorphism
\begin{equation}
  \label{eq:eval}
  \Gen{H}\xrightarrow{\eval}\pi_{1}(M,\star).
\end{equation}

All the objects involved in theorem \ref{thm:RoughSurjectivity} are now
defined  and we recall its statement~:
\begin{theorem}\label{thm:EvalOntoPi1}
  With the above notations, the evaluation map induces a surjective homomorphism
  \begin{equation}
    \label{eq:onto}
    \xymatrix{\Gen{H} \ar@{>>}[r]^{\eval}&\pi_{1}(M,\star)}.
  \end{equation}
\end{theorem}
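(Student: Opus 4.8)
The plan is to compare Floer loops with Morse loops and use the classical fact that Morse loop steps generate $\pi_1(M,\star)$. The Morse analogy developed in the comments after Definition \ref{def:FloerLoopStep} is not just motivational: it is the backbone of the argument. First I would fix an auxiliary Morse-Smale pair $(f,g)$ on $M$ with $\star$ a regular value, and recall that every class in $\pi_1(M,\star)$ is represented by a loop which, after pushing down by the negative gradient flow, becomes a concatenation of Morse loop steps (paths running along the unstable manifolds $W^u(y_i)$ of index $1$ critical points $y_i$, joined by short arcs near index $0$ critical points). This gives a surjection from the group generated by Morse loop steps onto $\pi_1(M,\star)$; the evaluation map on the Morse side is, essentially by the identification $\MintMorse(y,\emptyset)\cong W^u(y)$ and its compactification, the obvious one sending a Morse loop step to the path it traces in $M$.

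\medskip

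Next I would produce, for each relevant Floer loop step, a corresponding \emph{path} in $M$ via the evaluation at the $+\infty$ end, exactly as in \eqref{eq:evalBrut}. The key is that the map $\eval:\Gen{H}\to\pi_1(M,\star)$ is already a well-defined group morphism (established in the excerpt), so surjectivity is what remains. To prove surjectivity I would show that the image contains a generating set of $\pi_1(M,\star)$. The natural way is a \emph{cobordism/continuation} argument between the Morse and Floer pictures: use a homotopy of the data (for instance scaling $H$ down to $0$, or a one-parameter family of mixed Morse--Floer equations à la PSS) to identify, up to homotopy of the evaluated paths, each Morse loop step with a Floer loop step (or a concatenation of Floer loop steps). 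Concretely: $\M_{\mathrm{Morse}}(y,\emptyset)$ and $\M(y,\emptyset)$ fit into a parametrized $1$-dimensional moduli space over $[0,1]$ (the homotopy parameter); the evaluated path at the Morse end is a Morse loop step, and at the Floer end it is a concatenation of Floer loop steps obtained by reading off how the component breaks as the parameter varies. The evaluation maps on the two ends are homotopic because the whole parametrized $2$-dimensional evaluation region gives an explicit homotopy of based loops. The special step $\star\glueto(\beta_\star,\alpha_\star)$ and its inverse play the role of the ``travel from $\star$ down to the index $0$ critical point'' and its return, so the concatenation of Floer loop steps assembled this way is indeed a Floer based loop in $\GenBrut{H}$.

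\medskip

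Carrying this out requires: (i) setting up the parametrized moduli spaces with the correct transversality (generic path of data), so that over $[0,1]$ one gets a $1$-manifold with boundary and corners whose boundary strata are exactly Morse loop steps at one end and breakings into Floer loop steps at the other; (ii) checking compactness, i.e. no bubbling on these low-dimensional parametrized spaces — guaranteed by the monotone/aspherical hypothesis as recalled in Section \ref{sec:Preliminaries}; (iii) verifying that the resulting assignment ``Morse loop step $\mapsto$ word in Floer loop steps'' is compatible enough with concatenation that the image of $\eval$ contains each $\eval(\text{Morse loop step})$ up to conjugation/composition by fixed elements (the $\star$-to-critical-point detours), which suffices since those fixed elements are themselves in the image. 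Then surjectivity follows from surjectivity of the Morse side.

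\medskip

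The main obstacle is step (i): making precise the degeneration of a Floer loop step into Morse data and, conversely, assembling the Floer loop steps that a Morse loop step decomposes into, while keeping track of orientations and of which index $0$ orbits/critical points are being passed through. In particular one must argue that as the homotopy parameter runs, the ``start/end augmentations'' $\alpha,\alpha'$ vary continuously and match up so that consecutive Floer loop steps really are consecutive in the sense of Definition \ref{def:FloerLoops} (this is where the gluing map $\glueto$ and the uniqueness statements in Propositions \ref{prop:FloerLoopStepsOrbit} and \ref{prop:FloerLoopStepsStar} enter). Once the combinatorics of this degeneration are under control, the homotopy-invariance of the evaluated based loop is routine, and surjectivity of $\eval$ reduces cleanly to the classical Morse-theoretic generation of $\pi_1$.
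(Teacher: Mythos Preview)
Your overall strategy coincides with the paper's: reduce surjectivity to the classical fact that Morse loop steps generate $\pi_1(M,\star)$, then transport each Morse loop step to a Floer based loop with the same evaluation. The paper packages this as a group morphism $\psi:\Gen{f}\to\Gen{H}$ with $\eval\circ\psi=\eval$, and surjectivity is then one line. Where you diverge is in the mechanism for the transport. You propose a continuation argument: interpolate the data over $[0,1]$ and read the degeneration of a parametrized moduli space. The paper instead keeps all data $(H,J,f,g,\star)$ fixed, takes $\star$ to be the \emph{unique minimum} of $f$ (not a regular value), and uses the hybrid PSS-type spaces $\M(b,y)$ for $b\in\Crit_1(f)$ and $y\in\CovOrbits_1(H)\cup\{\star\}$. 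The key device you are missing is the \emph{crocodile walk}: on the finite set $B(b)=\bigcup_y \M(b,y)\times\partial\M(y,\emptyset)$ one alternates the two gluing involutions (upper and lower breaks); the orbit through $(\gamma_-,\star)$ is forced to end at $(\gamma_+,\star)$, and the sequence of \emph{lower} steps along this orbit is, by construction, a Floer based loop $\psi(b)$. The associated disc $\Delta(W_\star(b))$, built by patching suspensions of the one-dimensional pieces, gives the homotopy $\eval(\psi(b))\sim\eval(b)$.

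Your continuation sketch is not wrong in spirit, but it has a real soft spot you yourself flag: the combinatorics of how a Morse loop step breaks into \emph{consecutive} Floer loop steps. In a homotopy-of-Hamiltonians picture the objects at the two ends (Morse critical points of $f$ versus periodic orbits of $H$) are not the same, so the sentence ``$\M_{\mathrm{Morse}}(y,\emptyset)$ and $\M(y,\emptyset)$ fit into a parametrized $1$-dimensional moduli space'' does not typecheck without further work (scaling $H$ to $0$ gives holomorphic spheres, not $f$-gradient lines). If instead you mean the PSS hybrid spaces, then the two-dimensional region you would evaluate is exactly $\M(b,\emptyset)$ in the paper's notation, and organizing its codimension-two corners so that the induced word in Floer loop steps is honestly consecutive and based at $\star$ \emph{is} the crocodile walk. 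So your route converges to the paper's once the combinatorial device is made explicit; the paper's normalization $\Crit_0(f)=\{\star\}$ also spares you the conjugation-by-detours bookkeeping you anticipate.
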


The description of the relations still requires the introduction of
further
  technical ingredients, and we postpone it to section
\ref{sec:Relations} to focus in the next section on the application to
the count, with multiplicity, of Hamiltonian periodic orbits, since it
only requires the surjectivity.

\subsection{Application}

\begin{definition}\label{def:Multiplicity}
Define the multiplicity of a Hamiltonian orbit $y\in\CovOrbits_{1}(H)$ as
the number of steps through it, i.e.
$$
 \nu_{J}(y) = 
 \frac{1}{2}\sum_{x\in\CovOrbits_{0}(H)}
   \Cardabs\M(y,x)\cdot\Cardabs\M(x,\emptyset).
$$ 
Define the multiplicity of the point $\star$ as the number
$$
  \nu_{J}(\star) = \frac{1}{2}
  \Big(\sum_{x\in\CovOrbits_{0}(H)}
  \Cardabs\M(\star,x)\cdot\Cardabs\M(x,\emptyset)\Big)
  -\frac{1}{2}.
$$ 
\end{definition}
Notice the counting here is not algebraic but geometric~: it is not hard
to see that the algebraic count would always be $0$.

\begin{remark}
Although they may seem to be $\frac{1}{2}\N$ valued, these numbers are in
fact integer valued~: as already observed, the gluing construction groups
the broken trajectories $(\beta,\alpha)$ from some
$y\in\CovOrbits_{1}(H)$ to $\emptyset$ in pairs, so there is an even
number of such, and the same holds for broken trajectories from $\star$
to $\emptyset$ but for $(\beta_{\star},\alpha_{\star})$, which proves
there is an odd number of such configurations.  
\end{remark}

\begin{remark}\label{rk:nustar}
  For $x\in\CovOrbits_{0}(H)$, letting $\nu_{J,\star}(x)=\frac{1}{2} \cdot
  \Cardabs\M(\star,x) \cdot \Cardabs\M(x,\emptyset)$,   we have
  $\nu_{J}(\star)+1=\sum_{x\in\CovOrbits_{0}(H)}\nu_{J,\star}(x)$, so that
  $\nu_{J}(\star)$ can also be expressed as a sum of
  ($\frac{1}{2}\N$-valued) ``multiplicities'' of index $0$ periodic orbits. 
\end{remark}

\begin{remark}\label{rk:MorseMultiplicity}
  Recall from \eqref{eq:MorseInterruptedModuliSpace} that all the
  involved moduli spaces, and hence the notion of multiplicity itself,
  make sense in the Morse setting. However, the Morse situation is much
  more constrained, and we know there are exactly two trajectories rooted
  at each index $1$ critical point, and exactly one through $\star$~:
  this implies that in the Morse setting, the multiplicity is always $1$
  for index $1$ critical points and $0$ for $\star$.
\end{remark}

The following statement is a reformulation of theorem
\ref{thm:RoughEnoughPoints} and is a direct corollary of our
construction. It will be proven in section \ref{sec:ProofEnoughSteps}.
\begin{theorem}\label{thm:EnoughSteps}
Let $\rho(\pi_{1}(M))$ be the minimal number of elements in a generating
family of $\pi_{1}(M)$. Then
\begin{equation}
  \label{eq:MainEstimate}
  \nu_{J}(\star)\ +
  \sum_{y\in\CovOrbits_{1}(H)} \nu_{J}(y)\geq \rho(\pi_{1}(M)).  
\end{equation}
In other words, counted with multiplicities, $
\{\star\}\cup\CovOrbits_{1}(H)$ contains sufficiently many elements to
generate $\pi_{1}(M)$.
\end{theorem}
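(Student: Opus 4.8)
The plan is to deduce Theorem~\ref{thm:EnoughSteps} from the surjectivity statement (Theorem~\ref{thm:EvalOntoPi1}) together with the finiteness of the set of Floer loop steps. The key observation is that the group $\Gen{H}$ is generated, as a group, by the Floer loop steps: indeed, by Definition~\ref{def:FloerLoops} every element of $\GenBrut{H}$ is a concatenation of Floer loop steps, so its image in $\Gen{H}$ is a product of the classes of such steps, and since each step comes with an inverse step (the same component with reversed orientation), the set of (say, one per orientation-pair) Floer loop steps is a finite generating set for $\Gen{H}$. Composing with the surjection $\eval\colon\Gen{H}\twoheadrightarrow\pi_{1}(M,\star)$, the images under $\eval$ of these finitely many steps form a generating family of $\pi_{1}(M)$. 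Hence $\rho(\pi_{1}(M))$ is at most the number of Floer loop steps, counted modulo the orientation-reversal pairing.

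The second step is to identify that count with the left-hand side of \eqref{eq:MainEstimate}. By Proposition~\ref{prop:FloerLoopStepsOrbit}, the loop steps through a fixed orbit $y\in\CovOrbits_{1}(H)$ are the oriented boundary-having components of the $1$-dimensional moduli space $\M(y,\emptyset)$; each such component has two endpoints, each endpoint being a broken configuration $(\beta,\alpha)\in\M(y,x)\times\M(x,\emptyset)$ for some $x\in\CovOrbits_{0}(H)$, and the gluing construction pairs these endpoints up, so the total number of such broken configurations is $\sum_{x}\Cardabs\M(y,x)\cdot\Cardabs\M(x,\emptyset)$, which is exactly twice the number of (unoriented) components, i.e. $2\nu_{J}(y)$. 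So the number of loop steps through $y$, counted one per orientation-pair, is $\nu_{J}(y)$. For the base point, Proposition~\ref{prop:FloerLoopStepsStar} and \eqref{eq:AlphaStar} show there is exactly one extra component (the special one, with the single boundary point $\star$), so the number of broken configurations attached to components of $\M(\star,\emptyset)$ is odd, equal to $\sum_{x}\Cardabs\M(\star,x)\cdot\Cardabs\M(x,\emptyset)$, and the number of loop steps through $\star$ (one per orientation-pair) is $\tfrac12\big(\sum_{x}\Cardabs\M(\star,x)\cdot\Cardabs\M(x,\emptyset)\big)-\tfrac12=\nu_{J}(\star)$ — the special step, being its own natural starting move, contributes the $-\tfrac12$ correction. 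Summing over $y$ and adding the $\star$ contribution yields that the total number of generating steps is $\nu_{J}(\star)+\sum_{y\in\CovOrbits_{1}(H)}\nu_{J}(y)$, and combining with the previous paragraph gives \eqref{eq:MainEstimate}.

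The main subtlety I expect to have to address carefully is the bookkeeping of orientations and the role of the special step $\star\glueto(\beta_{\star},\alpha_{\star})$: one must make sure that in reducing a Floer based loop to a word in the steps, each occurrence of the special step (there are two per loop, one at each end, by Definition~\ref{def:FloerLoops}) is handled consistently with the cancellation rule $\sim$, and that it is legitimate to count the special step as ``half'' a loop step pair so that the arithmetic matches $\nu_{J}(\star)$ exactly rather than up to a constant. A clean way to do this is to first argue purely group-theoretically — $\pi_{1}(M)$ is a quotient of a group generated by $N$ elements, where $N$ is the number of orientation-classes of Floer loop steps, hence $\rho(\pi_{1}(M))\le N$ — and only then to compute $N$ in terms of the moduli spaces, so that the integrality remarks already made in the text (each $\M(y,\emptyset)$ has an even number of broken ends, each $\M(\star,\emptyset)$ an odd number) directly give $N=\nu_{J}(\star)+\sum_y\nu_{J}(y)$. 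The rest is a direct invocation of Theorem~\ref{thm:EvalOntoPi1} and needs no further input.
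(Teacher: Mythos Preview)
There is a genuine gap in your argument: Floer loop steps are not elements of $\Gen{H}$, so the assertion that $\Gen{H}$ is ``generated, as a group, by the Floer loop steps'' is not well-formed. By Definition~\ref{def:FloerLoops}, an element of $\Gen{H}$ is (the class of) a Floer \emph{based loop}, i.e.\ a sequence of consecutive steps beginning and ending at $\star$. A single step through some $y\in\CovOrbits_{1}(H)$ runs between two augmented index-$0$ orbits; it is not a based loop, and its evaluation is a free path in $M$, not an element of $\pi_{1}(M,\star)$. Hence there is no map ``$\eval$ on steps'' with values in $\pi_{1}(M,\star)$, and your sentence ``the images under $\eval$ of these finitely many steps form a generating family of $\pi_{1}(M)$'' has no content as stated. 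The correct picture is that $\Gen{H}$ is a \emph{subgroup} (the based-loop subgroup at $\star$) of the free group on steps, not a quotient of it; surjectivity of $\eval$ on this subgroup does not by itself yield a generating set for $\pi_{1}(M,\star)$ indexed by the steps.

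The paper closes this gap using the auxiliary Morse function. The map $\phi$ of Lemma~\ref{lem:PhiTop} (push down by the gradient flow of $f$, which has $\star$ as its unique minimum) \emph{is} defined on individual steps: the evaluation of a step is a free path whose endpoints both flow to $\star$, producing a Morse based loop and hence an element of $\pi_{1}(M,\star)$. This gives a genuine homomorphism $\eval\circ\phi$ from the free group $\Steps(H)$ on the \emph{non-special} steps to $\pi_{1}(M,\star)$. The special step $\sigma_{\star}$ is then disposed of separately: since in any Floer based loop $\sigma_{\star}$ occurs only as the first letter, $\sigma_{\star}^{-1}$ only as the last, and otherwise only in cancelling pairs $\sigma_{\star}^{-1}\sigma_{\star}$, stripping these defines an embedding $\tau:\Gen{H}\hookrightarrow\Steps(H)$, and the effect of $\tau$ on $\eval\circ\phi$ is merely a conjugation. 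Surjectivity of $\eval$ on $\Gen{H}$ (Theorem~\ref{thm:EvalOntoPi1}) therefore transfers to surjectivity of $\eval\circ\phi$ on $\Steps(H)$. Since $\Steps(H)$ has exactly $\nu_{J}(\star)+\sum_{y}\nu_{J}(y)$ free generators, the inequality follows. Your step count is essentially correct, but the missing ingredient is a mechanism turning each step into an honest element of $\pi_{1}(M,\star)$ --- and that is precisely what $\phi$ supplies.
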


\begin{remark}
  According to remark \ref{rk:MorseMultiplicity}, the left hand side in
  \eqref{eq:MainEstimate} in the Morse setting is exactly the number of
  index $1$ critical points, so that in this setting the inequality
  \eqref{eq:MainEstimate} is nothing but the usual lower estimate of the
  number of index $1$ critical points of a Morse function
  by the minimal number of generators of the fundamental group $\pi_{1}(M)$.%
\end{remark}


\begin{remark}
  The term $\nu_{J}(\star)$ in \eqref{eq:MainEstimate} may be unexpected,
  since it automatically vanishes in the Morse setting. It is a very
  natural question to ask how essential it is and if it can be
  controlled. 

  The  theorem \ref{thm:RoughAtLeastOneOrbit}, stated in a more precise
  form below as theorem \ref{thm:AtLeastOneOrbit} and proven in section
  \ref{sec:ProofAtLeastOneOrbit}, ensures that when $\pi_{1}(M)\neq\{1\}$,
  the contribution of the index $1$ orbits is at least $1$, since it
  provides at least one such orbit with non vanishing multiplicity.

  This lower bound on the number of index $1$
    orbits may seem rather small, but no better result seems to be known
    without further assumption on the fundamental group yet. Moreover,
    the proof itself is very  geometric and might be of independent
  interest~: it is a variation, in the usual context of PSS moduli
  spaces, on the main guiding principle of this paper of using
  $1$-dimensional moduli spaces to catch extra information. We stress
  however that it is not an application of the construction of the
  fundamental group, but an illustration that the multiplicities cannot
  be arbitrary.
\end{remark}

\begin{theorem}\label{thm:AtLeastOneOrbit}
  Suppose $\pi_{1}(M)\neq\{1\}$. Let $H$ be a non degenerate Hamiltonian
  function and $J$ a generic choice of a time dependent almost complex
  structure $J$ compatible with $\omega$.

  Then $H$ has at least one contractible $1$-periodic orbit with
  Conley-Zehnder index $1-n$ and with non vanishing multiplicity with
  respect to $J$.
\end{theorem}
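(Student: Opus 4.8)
The plan is to argue by contradiction, assuming either that $H$ has no contractible $1$-periodic orbit of Conley--Zehnder index $1-n$, or that all such orbits have vanishing multiplicity with respect to $J$, and to derive from this a contradiction with $\pi_{1}(M)\neq\{1\}$ by examining the $1$-dimensional moduli space $\M(\star,\emptyset)$. The guiding geometric idea is that the evaluation at the $+\infty$ end along the one-parameter family $\Mint(\star,\emptyset)$, together with the regularity of the constant sphere at $R=0$ from Proposition~\ref{prop:StarIsRegular}, forces a closed loop in $M$ that, thanks to the absence (or triviality) of index $1-n$ orbits, can be shown to be contractible \emph{and} to sweep out a nontrivial class. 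Concretely, recall that $|x|=\mu_{CZ}(x)+n$, so an orbit of Conley--Zehnder index $1-n$ is exactly an element of $\CovOrbits_{1}(H)$, and its multiplicity $\nu_{J}(y)$ counts (half) the pairs $(\beta,\alpha)\in\M(y,x)\times\M(x,\emptyset)$ with $x\in\CovOrbits_{0}(H)$.

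First I would analyse the structure of $\M(\star,\emptyset)$ under the contradiction hypothesis. By the remarks following Definition~\ref{def:FloerLoopStep} and by \eqref{eq:AlphaStar}, there is a unique component $C_{\star}$ with one end at the constant map $(u_{\star},0)$ over $R=0$ and the other end at a broken configuration $(\beta_{\star},\alpha_{\star})\in\M(\star,x_{\star})\times\M(x_{\star},\emptyset)$ over $R=+\infty$. Evaluating at the $+\infty$-end of the cylinder along $C_{\star}$ produces a path in $M$ starting at $\star$ (the value of the constant map) and ending at the point $\alpha_{\star}(+\infty)$; closing it up using the "capped" trajectory $\alpha_{\star}$ gives a loop $\gamma_{\star}$ based near $\star$, which is precisely the evaluation of the special Floer loop step $\star\glueto(\beta_{\star},\alpha_{\star})$. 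The key point is that any other Floer loop step through $\star$ — the quadruple type in Proposition~\ref{prop:FloerLoopStepsStar} — contributes $\Cardabs\M(\star,x)\cdot\Cardabs\M(x,\emptyset)$ to $2\nu_{J}(\star)$, and every Floer loop step through an index $1-n$ orbit $y$ contributes to $2\nu_{J}(y)$. So if all multiplicities vanished, the \emph{only} Floer loop step available would be the special one $\star\glueto(\beta_{\star},\alpha_{\star})$, and hence (by Definition~\ref{def:FloerLoops}) the only Floer based loops would be powers of the single step $\sigma_{\star}=(\star,\beta_{\star},\alpha_{\star})(\alpha_{\star},\beta_{\star},\star)$, which is its own inverse up to the cancellation rule $\sim$; therefore $\Gen{H}$ would be trivial (or at most $\Z/2$, and in fact trivial after cancellation). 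By Theorem~\ref{thm:EvalOntoPi1} the evaluation $\Gen{H}\twoheadrightarrow\pi_{1}(M,\star)$ is surjective, forcing $\pi_{1}(M)=\{1\}$, a contradiction. This already handles the "all multiplicities vanish" case and, a fortiori, the "no such orbit exists" case.

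The remaining work — and the step I expect to be the main obstacle — is to promote this to the sharper statement that at least one orbit has \emph{strictly positive} multiplicity for a \emph{generic} $J$, i.e. to rule out the degenerate possibility that the $\Cardabs$-counts conspire to be positive but the quadruple-type steps never actually \emph{connect distinct} configurations in a way that survives cancellation. Here I would argue that if every $y\in\CovOrbits_{1}(H)$ had $\nu_{J}(y)=0$ then in particular $\Cardabs\M(y,x)\cdot\Cardabs\M(x,\emptyset)=0$ for all $x$; combined with $\nu_J(\star)=0$ forcing $\Cardabs\M(\star,x)\cdot\Cardabs\M(x,\emptyset)$ to be $0$ for $x\neq x_\star$ and the pair count to be exactly the single $(\beta_\star,\alpha_\star)$, I can then run the triviality-of-$\Gen{H}$ argument above verbatim. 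The delicate point is genericity: I must invoke the standard transversality package (Remark~\ref{rk:StarIsRegular} and the regularity statements preceding it) to ensure that for generic $J$ all the relevant $0$- and $1$-dimensional moduli spaces are cut out transversely, so that $\Cardabs$ is well-defined and the gluing/compactness picture of $\M(\star,\emptyset)$ and $\M(y,\emptyset)$ holds; with that in hand the counting argument is purely combinatorial. I would close by noting explicitly that the orbit produced — any $y$ appearing in a nontrivial Floer loop step forced by $\pi_1(M)\neq\{1\}$ — has $\nu_J(y)>0$ by construction, which is the asserted "explicitly non vanishing multiplicity."
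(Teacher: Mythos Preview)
Your argument has a genuine gap: you assume $\nu_{J}(\star)=0$ in addition to $\nu_{J}(y)=0$ for all $y\in\CovOrbits_{1}(H)$, but the contradiction hypothesis for Theorem~\ref{thm:AtLeastOneOrbit} is only the latter. The whole content of the theorem is precisely that $\nu_{J}(\star)$ \emph{cannot} be the sole contribution (see the remark preceding the statement). Under the correct hypothesis there may well be many non-special Floer loop steps through $\star$, so $\Gen{H}$ need not be trivial and your appeal to Theorem~\ref{thm:EvalOntoPi1} yields nothing beyond Theorem~\ref{thm:EnoughSteps}. In your final paragraph you write ``combined with $\nu_{J}(\star)=0$ forcing\dots'', which is exactly the unjustified extra assumption; without it the ``triviality-of-$\Gen{H}$ argument'' does not run.

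The paper's proof takes a different route that does not go through $\Gen{H}$ at all. It picks a Morse function with a single minimum at $\star$ and an index~$1$ Morse critical point $b$ whose unstable manifold gives a nontrivial loop $\gamma$. Under the assumption that no augmented index~$0$ orbit $x$ receives a Floer trajectory from any index~$1$ orbit, every broken hybrid trajectory from $b$ to such an $x$ factors through $\star$, so one can run a two-involution (crocodile-style) walk on $\{\gamma_{-},\gamma_{+}\}\times\M(\star,x)$. Patching the suspensions of the resulting $1$-dimensional families gives discs showing $\gamma^{n_{x}}=1$ in $\pi_{1}(M)$, where $n_{x}=\Cardalg\M(\star,x)$. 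The PSS identity $\sum_{x}n_{x}m_{x}=1$ (with $m_{x}=\Cardalg\M(x,\emptyset)$) then forces $\gamma=1$, contradicting the choice of $b$. The key idea you are missing is this use of an auxiliary Morse index~$1$ point together with the algebraic PSS relation to control the steps through $\star$; surjectivity of $\eval$ on $\Gen{H}$ alone is not fine enough to separate the $\star$-contribution from the orbit contributions.
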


\subsection{More notations and tools.} 

\subsubsection{Mixed moduli spaces}
In addition to the already introduced moduli spaces we will need hybrid
Morse-Floer moduli spaces, depicted in figure \ref{fig:MixedModuliSpaces}
and defined below. 
\begin{figure}[!ht]
  \centering
  \includegraphics[scale=.6]{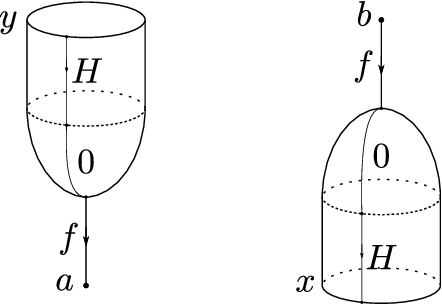}
  \caption{Hybrid moduli spaces.}
  \label{fig:MixedModuliSpaces}
\end{figure}

Let $f$ be a Morse function and $g$ a Riemannian metric on $M$.
By convention, the Morse flow associated to $(f,g)$ is the flow
  of the negative gradient $-\nabla f$ of $f$ with respect to $g$. Let
$\Crit_{k}(f)$ be the set of index $k$ critical points, and suppose
$\Crit_{0}(f)=\{\star\}$. For $y\in\CovOrbits_{1}(H)$ and $a\in\Crit(f)$,
we let 
$$
\Mint(y,a) = \{u\in\M(y,\emptyset), u(+\infty)\in W^{s}(a)\},
$$
where $W^{s}(a)$ is the stable manifold of $a$. 

Similarly, for $b\in\Crit_{1}(f)$ and $x\in\CovOrbits_{0}(H)$, we let 
$$
\Mint(b,x) = \{u\in\M(\emptyset,x), u(-\infty)\in W^{u}(b)\},
$$ 
where $W^{u}(b)$ is the unstable manifold of $b$.

The couple $(f,g)$ is supposed to be
chosen generically, so that all these spaces are cut out transversely. In
particular, they have the expected dimensions~:
%
%
%
%
$$
\dim\Mint(y,a) = |y|-|a|
\qquad
\dim\Mint(b,x) = |b|-|x|,
$$
(where the Morse index is also denoted by $|\cdot|$). Moreover, these
spaces are compact up to bubbling of spheres and breaking, either at an
intermediate Hamiltonian orbit or at an intermediate Morse critical point
(see \cite{PSS}), and the compactifications are denoted by $\M(y,a)$ and
$\M(b,x)$. When they are $0$ or $1$-dimensional, no bubbling can occur on
such moduli spaces, and they consist of a finite set of points when they
are $0$-dimensional, and a finite set of circles and segments whose
boundary consists in broken configurations when they are $1$-dimensional.
Finally, recall there  is a gluing construction proving every broken
configuration does indeed appear on the boundary of a bigger moduli space.

\subsubsection{Crocodile walk.}\label{sec:CrocodileWalk}
We now introduce the main technical tool.

Consider a Hamiltonian orbit $z$ of index $2$. Let $B(z)$ be the space of
twice broken trajectories from $z$ to $\emptyset$~:
$$
B(z)=\bigcup_{\substack{|y|=1\\|x|=0}}\M(z,y)\times\M(y,x)\times\M(x,\emptyset).
$$
For each such trajectory $(\gamma,\beta,\alpha)$ in some
  $\M(z,y)\times\M(y,x)\times\M(x,\emptyset)$, the gluing construction
can take place either at the upper breaking $y$ or at the 
lower one $x$. Gluing at the upper breaking defines an
involution
$$
\sharp^{\bullet}\begin{array}{cccl}
  B(z) & \to & B(z) \\
  (\gamma,\beta,\alpha)& \mapsto & (\gamma',\beta',\alpha)
\end{array},
$$
where $(\gamma',\beta')$ is such that
$(\gamma,\beta)\glueto(\gamma',\beta')$. Similarly, gluing at the lower
breaking, defines another involution
$$
\sharp_{\bullet}\begin{array}{cccl}
  B(z) & \to & B(z) \\
  (\gamma,\beta,\alpha)& \mapsto & (\gamma,\beta',\alpha')
\end{array}.
$$

According to definition \ref{def:FloerStep}, upper and lower gluings are
both Floer steps, and lower gluings are Floer loop steps.

Iteration of alternately upper and lower gluings then naturally appears as
a walk on the space of twice broken trajectories. Moreover, since the
intermediate Floer trajectory form a zigzag pattern (see figure
\ref{fig:CrocoWalk}), we use the following vocabulary~:
\begin{definition}
Iteration of alternately upper and lower gluings
$\sharp_{\bullet}\circ\sharp^{\bullet}\circ\sharp_{\bullet}\circ\sharp^{\bullet}\circ\dots$ will be abbreviated as
running a \emph{``crocodile walk''} on the set $B(z)$ of twice broken
trajectories from $z$ to $\emptyset$.   
\end{definition}

\begin{remark}
  Given a twice broken configuration, the crocodile walk can be started
  with an upper or a lower gluing~: because $\sharp^{\bullet}$ and
  $\sharp_{\bullet}$ are involutions, this only affects the walking
  direction along the orbit, but not the underlying non-oriented orbit.
  We consider orbits as oriented however, so through one configuration go
  exactly two orbits of the crocodile walk, which differ only by the orientation. 
\end{remark}

  \begin{remark}
    A more geometric interpretation of the crocodile walk can be given by
    considering the boundary components of the $2$-dimensional  moduli
    space $\M(z,\emptyset)$ (see figure \ref{fig:CornersWalk}). The set
    $\M(z,\emptyset)\setminus\Mint(z,\emptyset)$ consists in bubbling
    configurations, which are $2$ codimensional, and ``boundary
    components'' which are $1$ codimensional and consist in broken
    configurations. The latter components are circles that are either
    smooth (when they are the product of two smaller moduli spaces
    without boundary) or have ``corners'' at twice broken configurations.
    The crocodile walk consists in moving along such an ``angular''
    boundary component from one corner to the next.
  \end{remark}

\begin{figure}[!ht]
  \centering
  \includegraphics[scale=.6]{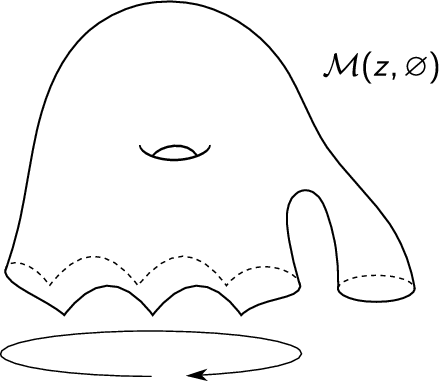}
  \caption{The crocodile walk as a way to explore ``angular'' boundary 
    components of $2$-dimensional moduli spaces
    (here in $\M(z,\emptyset)$ for
    $z\in\CovOrbits_{2}(H)$).}
  \label{fig:CornersWalk}
\end{figure}

\begin{remark}
  Crocodile walks can in fact be defined  on
  any kind of $0$-dimensional moduli space of twice broken
  configurations, like the space of twice broken Floer trajectories
  between orbits of relative index $3$ for instance, or hybrid moduli
  spaces mixing Floer and Morse trajectories as in the next paragraph.
\end{remark}

The crocodile walk is the iteration of a one to one map
($\sharp_{\bullet}\circ\sharp^{\bullet}$) on a finite set, so the
orbits all have to be cyclic. 

Moreover, if a configuration is reached after an upper (resp. lower)
gluing, it has to be left with a lower (resp. upper) one. As a
consequence, being cyclic, an orbit has to contain the
same number of upper and lower gluings. In particular, it counts an even
number of steps.

\medskip

To an orbit of the crocodile walk is not only associated a sequence of
twice broken trajectories, but also an abstract polyhedron representing
the way the trajectories in the different moduli spaces fit together.

\begin{figure}[!ht]
  \centering
  \includegraphics[scale=.4]{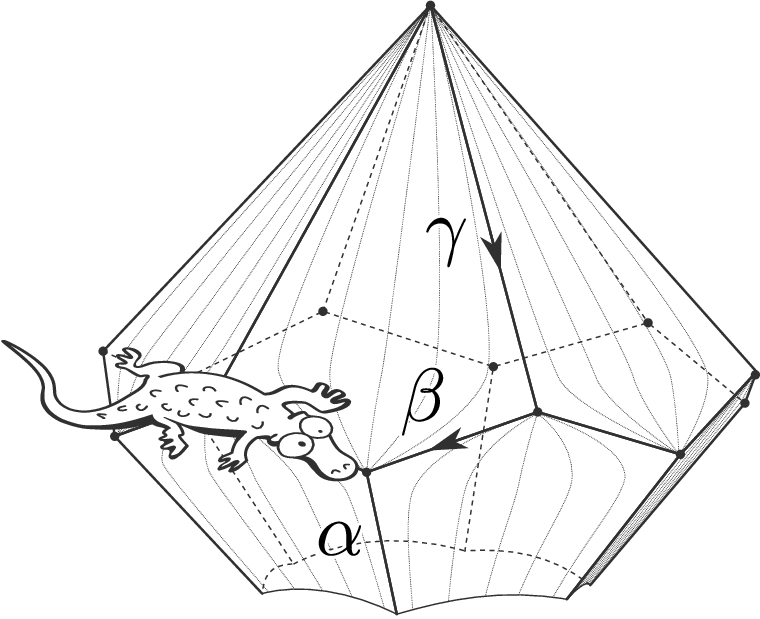}
  \caption{An orbit of the crocodile walk on the space of twice broken trajectories
    from $z\in\CovOrbits_{2}(H)$ to $\emptyset$.}
  \label{fig:CrocoWalk}
\end{figure}


An orbit $W$ of the crocodile walk is a sequence
$$
(
(\gamma_{1},\beta_{1},\alpha_{1}),
(\gamma_{2},\beta'_{1},\alpha_{1}),
(\gamma_{2},\beta_{2},\alpha_{2}), \dots,
(\gamma_{N},\beta_{N},\alpha_{N})
)
$$
such that $(\gamma_{1},\beta_{1},\alpha_{1}) =
(\gamma_{N},\beta_{N},\alpha_{N})$ and
\begin{gather}
  (\gamma_{k},\beta_{k})\glueto(\gamma_{k+1},\beta'_{k})
  \quad\text{ and }\quad
  (\beta'_{k},\alpha_{k})\glueto(\beta_{k+1},\alpha_{k+1}).
\end{gather}

\begin{lemma}
  Let $W$ be an orbit of the crocodile walk like above. There exists an
  abstract disc $\Delta(W)$ endowed with a continuous map
  $\Delta(W)\xrightarrow{\eval}M$ whose restriction to the boundary is
  the concatenation of evaluation of the Floer steps
  $(\beta'_{1},\alpha_{1})\glueto(\beta_{2},\alpha_{2}), \dots,
  (\beta'_{N-1},\alpha_{N-1})\glueto(\beta_{N},\alpha_{N}) $
\end{lemma}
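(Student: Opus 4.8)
The plan is to build $\Delta(W)$ by assembling, face by face, the two-dimensional pieces of $\M(z,\emptyset)$ that the crocodile walk visits, and to glue them along the one-dimensional moduli spaces that realize the individual gluings. Concretely, each twice-broken configuration $(\gamma_k,\beta_k,\alpha_k)$ in the orbit $W$ is, by the gluing construction, a corner of $\M(z,\emptyset)$ with a neighborhood homeomorphic to $[0,\epsilon)^2$, the two coordinates corresponding to the upper and lower gluing parameters. A step of the crocodile walk moves from one corner to an adjacent one along one edge of such a square; so the union of the squares visited, glued along the shared edges, is a strip, i.e. a $1$-complex $\times[0,\epsilon)$ thickened into a $2$-complex. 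Since $W$ is a \emph{cyclic} orbit containing equally many upper and lower gluings, this strip closes up combinatorially; I would then observe that the "interior edge" of the strip (the locus of twice-broken configurations between consecutive corners, parametrized by the single non-glued break) can be \emph{capped off}: the twice-broken configuration $(\gamma,\beta,\alpha)$ degenerates, as that remaining gluing parameter runs, into a point of a lower-dimensional stratum, and no monotonicity obstruction (bubbling) occurs in this dimension. This produces a disc $\Delta(W)$ rather than an annulus. I would set up the combinatorics so that the outer boundary of $\Delta(W)$ is exactly the concatenation of the $1$-dimensional moduli spaces $(\beta'_k,\alpha_k)\glueto(\beta_{k+1},\alpha_{k+1})$ that carry the Floer loop steps of the walk.

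First I would fix, for each index $k$, a parametrization of the $1$-dimensional moduli space realizing $(\gamma_k,\beta_k)\glueto(\gamma_{k+1},\beta'_k)$ (the upper edge) and of the one realizing $(\beta'_k,\alpha_k)\glueto(\beta_{k+1},\alpha_{k+1})$ (the lower edge, a Floer loop step); these give the two families of edges of the strip. Next I would form the abstract $2$-complex $\Sigma$ obtained from $N$ squares $Q_k\cong[0,1]^2$ by gluing $Q_k$ to $Q_{k+1}$ along an upper/lower edge according to the pattern of the walk, with the cyclicity of $W$ ensuring the gluing pattern is consistent and $\Sigma$ is a compact surface with boundary, one of whose boundary arcs is the desired concatenation of lower edges. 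Then I would cap the complementary boundary arc (the concatenation of the "non-glued break" loci) by a disc, using that each such arc is a compact segment of a $1$-dimensional moduli space whose ends are the twice-broken configurations $(\gamma_k,\beta_k,\alpha_k)$, so that the whole complementary arc is a single circle (or a union of arcs that can be filled); contractibility follows because $\Sigma$ is simply connected once one edge-cycle is filled. Finally I would define $\eval:\Delta(W)\to M$ by taking, on the interior of each square $Q_k$, the evaluation at $u(+\infty)$ of the corresponding piece of $\M(z,\emptyset)$ (the elements are genuine Floer tubes from $z$, evaluated at their free $+\infty$ end), checking these evaluations agree on the glued edges because the gluing homeomorphisms are compatible with evaluation, and that the cap can be filled consistently since the evaluation on the non-glued-break arc is null-homotopic rel endpoints (it bounds in the relevant piece of the moduli space).

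The main obstacle will be the careful bookkeeping of \emph{orientations and identifications at the corners}: one must check that the two squares meeting at a twice-broken configuration $(\gamma_k,\beta_k,\alpha_k)$ are glued along the correct pair of edges (one upper, one lower) and with the correct orientations, so that the resulting surface is the genuine boundary stratum of $\M(z,\emptyset)$ "containing corners" described in the preceding proposition, and so that no spurious boundary is introduced. Equivalently, I must verify that the combinatorial strip built from the alternating sequence $\sharp_{\bullet}\circ\sharp^{\bullet}\circ\cdots$ is embedded in the boundary of $\M(z,\emptyset)$ and that its complementary boundary is exactly the union of lower-stratum arcs, with no extra breaks. Once this local model is pinned down, the global statement — existence of $\Delta(W)$ and of the evaluation map with the prescribed boundary behaviour — follows formally; compactness (no bubbling in dimension $\le 2$ under the monotonicity or asphericity hypothesis) guarantees that the pieces we glue are genuinely compact surfaces with corners and that the construction terminates.
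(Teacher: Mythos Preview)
Your plan has a genuine gap, and it stems from trying to build $\Delta(W)$ out of pieces of the two--dimensional moduli space $\M(z,\emptyset)$ itself. The orbit $W$ is indeed a boundary component (with corners) of $\M(z,\emptyset)$, and a collar of $W$ in that surface is an annulus carrying the evaluation--at--$+\infty$ map to $M$; one boundary circle of this annulus is $W$ and restricts to the concatenation of Floer loop steps as you want. The problem is the \emph{other} boundary circle: it sits in the interior of $\M(z,\emptyset)$, not on any broken stratum, so your proposed capping by ``degeneration, as that remaining gluing parameter runs, into a point of a lower--dimensional stratum'' does not make sense --- there is no remaining break there, only genuine interior points of a $2$--manifold whose global topology you do not control. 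You would need that component of $\M(z,\emptyset)$ to be a disc, and nothing guarantees this.

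The paper's proof avoids $\M(z,\emptyset)$ altogether and builds $\Delta(W)$ as an abstract $2$--polyhedron out of \emph{suspensions} of the one--dimensional moduli spaces that the walk visits. For each upper step one takes the full suspension $\Sigma\M^{\bullet}_{k}$ (a diamond), for each lower step the half--suspension $\Sigma'\M_{\bullet k}$ (a pentagon), and the crucial point is the evaluation map: it is \emph{not} $u\mapsto u(+\infty)$ but rather evaluation of the Floer tube along its length, with the suspension parameter reparametrised by the action. The four sides of each diamond then evaluate to the actual trajectories $\gamma_{k},\beta_{k},\gamma_{k+1},\beta'_{k}$, and the top vertex of every diamond evaluates to the single point $z(0)$. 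Gluing diamonds and pentagons along their shared trajectory--sides therefore produces a fan around a single apex --- automatically a disc --- whose outer boundary is the concatenation of the pentagon bottoms, i.e.\ the Floer loop steps. The disc structure comes for free from the suspension combinatorics, not from any control on the topology of $\M(z,\emptyset)$.

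If you want to repair your approach, the honest way to cap the inner circle of your collar is to use the tubes themselves as homotopies from the inner loop to the contractible orbit $z$; but writing this out carefully is exactly the suspension construction the paper carries out. So the missing idea is: use evaluation along the full trajectory, not only at the free end.
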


\begin{proof}
  Let $\M^{\bullet}_{k}$ (resp. $\M_{\bullet k}$) be an abstract copy of
  the component of the moduli space relating $(\gamma_{k},\beta_{k})$ to
  $(\gamma_{k+1},\beta'_{k})$(resp. $(\beta'_{k},\alpha_{k})$ to
  $(\beta_{k+1},\alpha_{k+1})$). Let $\Sigma\M^{\bullet}_{k}$ be it's
  suspension~: it is the suspension of a segment and hence can be
  identified with the standard diamond. 

  Recall that before compactification, the evaluation along the real line
  $\R\times\{0\}\subset\R\times\S^{1}$ defines a map
  $$
  \overset\circ {\M^{\bullet}_{k}}\times\R\xrightarrow{\eval}M.
  $$
  Since the action is strictly decreasing along the Floer trajectories,
  it can be used to define a parameterization of the trajectories, and to
  define a continuous map
  $$
  \overset\circ {\M^{\bullet}_{k}}\times[-1,1]\xrightarrow{\eval}
  M
  $$
  that extends continuously to the compactification, and descends to the
  suspension
  $$
  \Sigma\M^{\bullet}_{k}\xrightarrow{\eval}M.
  $$

  \begin{figure}[!ht]
    \includegraphics[scale=.4]{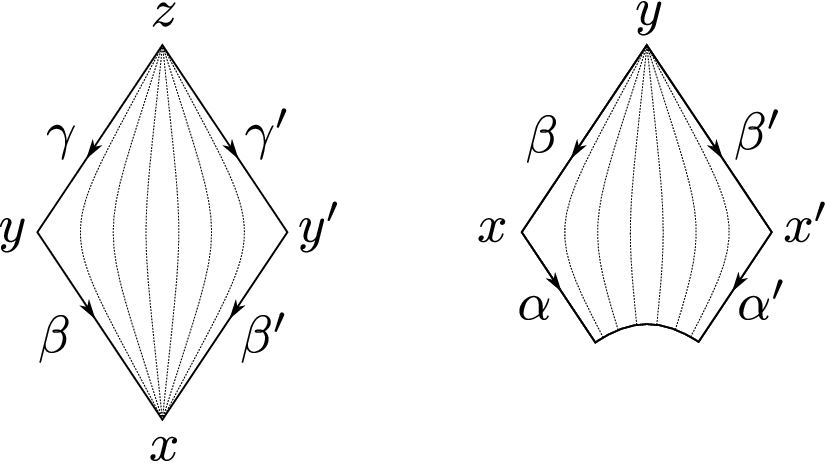}
    \centering
    \caption{Steps suspensions.}
    \label{fig:Suspensions}
  \end{figure}

  We think of $\Sigma\M^{\bullet}_{k}$ as a diamond (see figure
  \ref{fig:Suspensions}), and on the four sides, the evaluation map is
  the action-normalized evaluation along the broken trajectories
  $(\gamma_{k},\beta_{k})$ on the left and $(\gamma_{k+1},\beta'_{k})$ on
  the right. 

  \medskip

  A similar construction can also be achieved for the $\M_{\bullet k}$
  spaces. The lower end of the trajectories is not constrained however,
  and the suspension should be replaced by the half suspension
  $\Sigma'\M_{\bullet k}=\faktor{\M_{\bullet k}\times[-1,1]} {
    \M_{\bullet k} \times\{1\}}$. We think of this as a truncated
  diamond, or a pentagon (see figure \ref{fig:Suspensions}). It is
  endowed with an evaluation map whose restriction
  \begin{itemize}
  \item 
    to the upper left side (i.e.
    $[0,1]\times\{(\beta'_{k},\alpha_{k})\}$) is $\beta'_{k}$
  \item
    to the lower left side (i.e.
    $[-1,0]\times\{(\beta'_{k},\alpha_{k})\}$) is $\alpha_{k}$
  \item
    to the upper right side (i.e.
    $[0,1]\times\{(\beta_{k+1},\alpha_{k+1})\}$)is $\beta_{k+1}$
  \item
    to the lower right side (i.e.
    $[-1,0]\times\{(\beta_{k+1},\alpha_{k+1})\}$)is $\alpha_{k+ 1}$
  \item
    to the bottom side (i.e. $\{-1\}\times\M_{\bullet k}$) is the
    evaluation at the center of the augmentations $\eval(u)= u(+\infty)$
  \end{itemize}

  \begin{figure}[!ht]
    \centering
    \includegraphics[scale=.4]{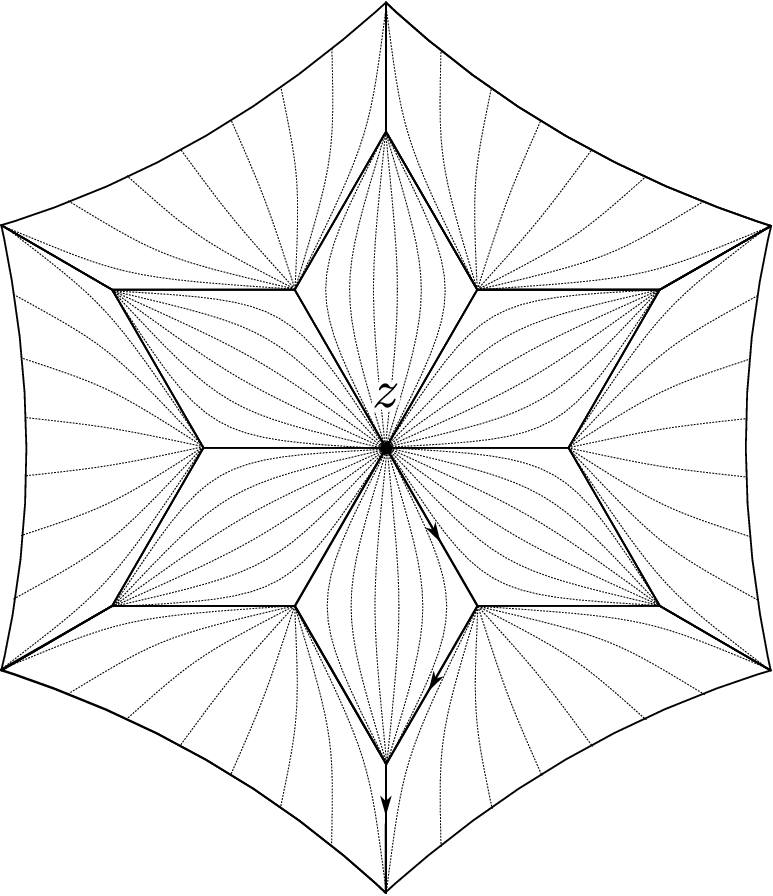}
    \caption{The disc $\Delta(W)$.}
    \label{fig:CrocoDisc}
  \end{figure}

  We identify all these diamonds and pentagons along their shared sides
  in the order of the gluings appearing in the orbit $W$ (see figure
  \ref{fig:CrocoDisc}). Formally, we let
  \begin{equation}
    \label{eq:CrocoDisc}
    \Delta(W)=\faktor{\Big(
      \bigsqcup_{k=1}^{N}\Sigma\M^{\bullet}_{k}\sqcup\Sigma'\M_{\bullet k}
      \Big)}{\sim}
  \end{equation}
  where $\sim$ is the identification, for each $k$ of 
  \begin{itemize}
  \item 
    the upper right side of $\Sigma\M^{\bullet}_{k}$ with the upper left
    side of $\Sigma\M^{\bullet}_{k+1}$,
  \item 
    the lower right side of $\Sigma\M^{\bullet}_{k}$ with the upper left
    side of $\Sigma\M_{\bullet k+1}$.
  \item 
    the lower left side of $\Sigma\M^{\bullet}_{k+1}$ with the upper
    right side of $\Sigma\M_{\bullet k+1}$.
  \end{itemize}
  
  The resulting $2$-dimensional polyhedron $\Delta(W)$ is a disc.
  Moreover, since it is compatible with all the identifications, the
  evaluation map descends to $\Delta(W)$ and defines a continuous map 
  \begin{equation}
    \label{eq:EvaluationOnCrocoDisc}
    \Delta(W)\xrightarrow{\eval} M.
  \end{equation}
  and has the desired behaviour on the boundary.
\end{proof}

\begin{remark}
Reversing the orientation of $W$ reverses the orientation of the
associated disc.
\end{remark}

\begin{remark}
  Regarding the crocodile walk orbit $W$ as a boundary component of a
  $2$-dimensional moduli space, the disc $\Delta(W)$ is essentially the
  same as the half suspension of this boundary component. 

  This geometric point of view does not avoid the above description
  however, since the structure of the disc and in particular the behavior
  of the evaluation on its boundary is crucial to our construction. 
\end{remark}

\subsubsection{Hybrid walks}\label{sec:HybridWalks}
As already observed, the ``crocodile walk''  can in fact be run on many
kinds of moduli spaces, in particular on a hybrid moduli space mixing
Morse trajectories rooted at an index $1$ critical point of our Morse
function $f$ and Floer tubes.

Let $b\in\Crit_{1}(f)$, let
$\{\gamma_{-},\gamma_{+}\}=\M(b,\star)$ be the two Morse trajectories
rooted at $b$ (recall $\Crit_{0}(f)=\{\star\}$). Let 
$$
B(b)=\bigcup_{y\in\CovOrbits_{1}(H)\cup\{\star\}}\M(b,y)\times\partial\M(y,\emptyset).
$$
This space plays the role of twice broken trajectories, but as already
observed, the space $\partial\M(\star,\emptyset)$ has one (and only one)
point which is not a breaking~: $B(b)$ splits as the union $B(b)=B'(b)\cup
B_{\star}(b)$ of the set of twice broken trajectories
$$
B'(b)=\bigcup_{\substack{
    y\in\CovOrbits_{1}(H)\cup\{\star\}\\
    x\in\CovOrbits_{0}(H) }}\M(b,y)\times\M(y,x)\times\M(x,\emptyset),
$$
and the two special isolated configurations that are not broken twice~:
$$
B_{\star}(b)=\M(b,\star)\times\{\star\}=\{(\gamma_{-},\star),(\gamma_{+},\star)\},
$$
where $\star$ is seen as the constant sphere in
$\M(\star,\emptyset)$.

Upper and lower gluings can be performed on $B'(b)$, but have to be
replaced by the relevant Floer steps on $B_{\star}(b)$, and we let
\begin{equation}  \label{eq:specialSteps}
  \begin{aligned}
  \sharp^{\bullet}(\gamma_{\pm},\star) &= (\gamma_{\mp},\star)\\ 
  \sharp_{\bullet}(\gamma_{\pm},\star) &= (\gamma_{\pm},\beta_{\star},\alpha_{\star})
\end{aligned}
\end{equation}

If the latter was already discussed, observe the former is rather a Morse
step. To see it as a Floer step, consider the moduli space of solutions
$(u,R)$ of $(F_{4,R})$ such that $u(-\infty)\in W^{u}(b)$, but restrict
attention to the boundary component given by $R=0$~: the configurations
$(\gamma_{\pm},\star)$, regarded as such configurations that underwent
a Morse breaking, are related by the moduli space obtained by gluing at the
Morse breaking and preserving the $R=0$ condition.

Defined in this way, the maps $\sharp^{\bullet}$ and $\sharp_{\bullet}$
form two involutions on $B(b)$ again, and iterated composition of
alternately $\sharp^{\bullet}$ and $\sharp_{\bullet}$ defines a walk,
still called a crocodile walk, whose orbits are all cyclic. 

\begin{remark}\label{rk:CrocodileLowerStepsAreLoopSteps}
  Notice for later use that the steps used in the definition of  
  $\sharp_{\bullet}$ are all Floer loop steps.  
\end{remark}

\begin{figure}[!ht]
  \centering
  \includegraphics[scale=.4]{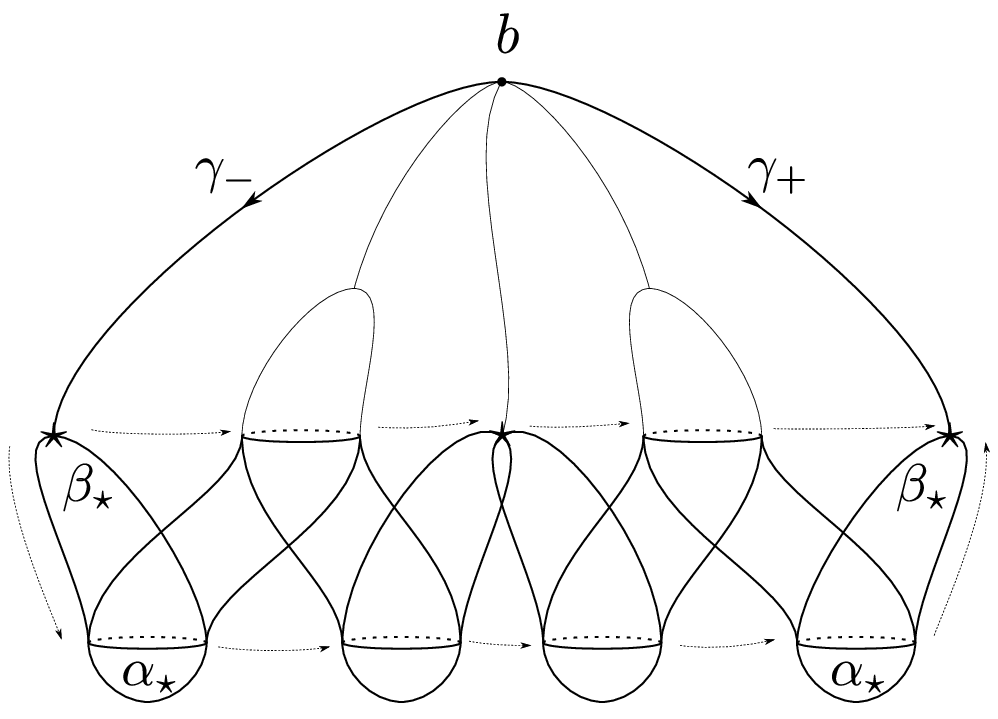}
  \caption{The orbit $W_{\star}(b)$ of the crocodile walk 
    associated to an index $1$ Morse critical point $b$.}
  \label{fig:MorseToFloer}
\end{figure}

\begin{definition}\label{def:CrocoStop}
  The orbit of $(\gamma_{-},\star)$ starting with a lower gluing will be
  denoted by $W_{\star}(b)$.
\end{definition}
 It is a cyclic sequence of the following form
  $$
  (\gamma^{-},\star),
  (\gamma^{-},\beta_{\star},\alpha_{\star}),
  (\gamma_{1},\beta_{1},\alpha_{1}), \dots
  (\gamma_{N},\beta_{N},\alpha_{N}),
  (\gamma^{+},\beta_{\star},\alpha_{\star}),%
  (\gamma^{+},\star),
  $$
where
\begin{itemize}
\item 
  $N$ is even (the orbits being cyclic, they have to count the same number of
  upper and lower steps, and hence an even number of elements), 
\item
  for $1\leq i\leq N$
  \begin{itemize}
  \item 
    $\gamma_{i}\in\M(b,y)$ for some $y\in\{\star\}\cup\CovOrbits_{1}(H)$,
  \item
    $\beta_{i}\in\M(y,x)$ for some $x\in\CovOrbits_{0}(H)$,
  \item
    $\alpha_{i}\in\M(x,\emptyset)$,
  \end{itemize}
\item
  for all $i$ with $0\leq i< N/2$~:
  \begin{gather*}
    (\gamma_{2i},\beta_{2i})\glueto(\gamma_{2i+1},\beta_{2i+1})
    \qquad\text{ and }\qquad
    \alpha_{2i}=\alpha_{2i+1}\\
    \gamma_{2i+1}=\gamma_{2i+2}  
    \quad\text{ and }\quad
    (\beta_{2i+1},\alpha_{2i+1})\glueto(\beta_{2i+2},\alpha_{2i+2}).
  \end{gather*}
  (with the convention
  $(\gamma_{0},\beta_{0})=(\gamma_{-},\beta_{\star})$ and 
  $(\gamma_{N+1},\beta_{N+1})=(\gamma_{+},\beta_{\star})$).
\end{itemize}

In particular (recall remark \ref{rk:CrocodileLowerStepsAreLoopSteps}), 
the sequence of lower steps 
\begin{equation}\label{eq:LowerCrocoSteps}
(\star,\beta_{\star},\alpha_{\star}),
\dots
(\alpha_{2i-1},\beta_{2i-1},\beta_{2i},\alpha_{2i}),
\dots
(\alpha_{\star},\beta_{\star},\star)
\end{equation}
form a Floer loop.

\begin{figure}[!ht]
  \centering
  \includegraphics[scale=.4]{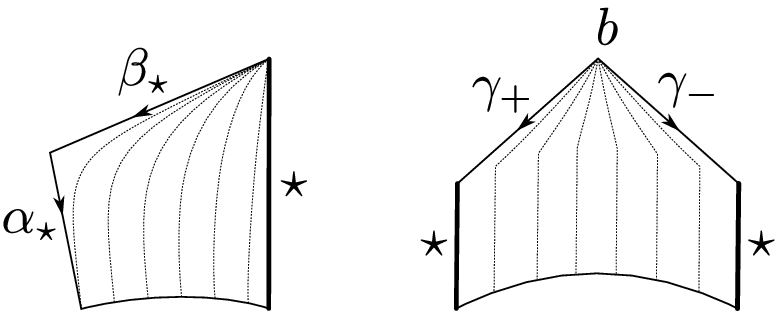}
  \caption{Half suspensions of the steps $(\beta_{\star},\alpha_{\star})\glueto \star$ and $(\gamma_{+},\star)\glueto(\gamma_{-},\star)$ }
  \label{fig:SpecialSuspensions}
\end{figure}

\begin{figure}[!ht]
  \centering
  \includegraphics[scale=.4]{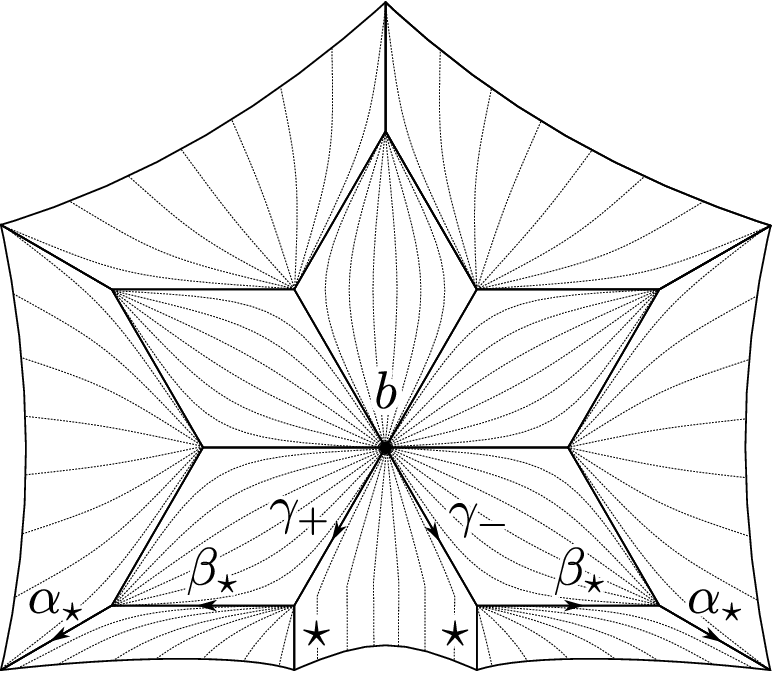}
  \caption{The disc $\Delta(W_{\star}(b))$}
  \label{fig:SpecialCrocoDisc}
\end{figure}

The construction of the polyhedron $\Delta(W_{\star}(b))$ still makes
sense for this special orbit~: exactly two new kinds of moduli spaces have
to be taken into account, namely the ones associated to the steps
$$
(\gamma_{\pm},\star)\glueto(\gamma_{\pm},\beta_{\star},\alpha_{\star})
\quad\text{ and }\quad
(\gamma_{+},\star)\glueto(\gamma_{-},\star).
$$
In both cases the bottom end of the configurations are free and the half
suspension of the relevant moduli space component is endowed with a
continuous evaluation map.

In the former however, one side is not associated to a broken trajectory
but to the constant one $\star$ and the half suspension is seen as having
4 sides. The evaluation map restricts to (see figure \ref{fig:SpecialSuspensions}) 
\begin{itemize}
\item 
  ${\beta_{\star}}_{|_{\R}}$ and ${\alpha_{\star}}_{|_{\R}}$ (suitably
  rescaled using the action) on the broken side
\item 
  the constant path $\{\star\}$ on the ``non broken'' side
\item
  the evaluation in $M$ of the Floer step
  $(\beta_{\star},\alpha_{\star})\glueto \star$ on the bottom.
\end{itemize}

In the latter, the half suspension can again be represented by a pentagon
and the evaluation map restricts to (see figure \ref{fig:SpecialSuspensions})
\begin{itemize}
\item 
  $\gamma_{+}$ and $\gamma_{-}$ on the upper left and right sides,
\item
  the constant trajectory $\star$ on the lower left and right sides,
\item
  the concatenation $\gamma_{+}\cdot\gamma_{-}$ on the bottom side.
\end{itemize}

The gluing construction used in \eqref{eq:CrocoDisc} adapts
straightforwardly to the $3$ special steps and results in a disc endowed
with a continuous evaluation map to $M$
\begin{equation}
  \label{eq:EvaluationOnCrocoHalfDisc}
  \Delta(W_{\star}(b))\xrightarrow{\eval}M.
\end{equation}
The restriction of the evaluation map to the boundary is the
concatenation of the trajectories $\gamma_{+}$ and $\gamma_{-}$ and of the
Floer loop formed by the lower steps used in the crocodile walk.

\section{Generation of the fundamental group}

In this section, homomorphisms from the group of Floer loops to
  that of Morse loops and vice versa are constructed, in order to prove
  theorem \ref{thm:EvalOntoPi1}, and to later study the relations.

In section \ref{sec:FromFloerToMorse}, we use the classical operation of
pushing arbitrary loops down by the flow in order to turn Floer loops into
Morse loops. This operation itself is not required for the proof of
theorem \ref{thm:EvalOntoPi1}, but it is reinterpreted purely in terms of
moduli spaces which makes it compatible with Floer theory. This is used
in section \ref{sec:FromMorseToFloer} to define a similar operation in
the reverse direction, turning Morse loops into Floer loops in the same
homotopy class. Finally, section \ref{sec:ProofEvalOntoPi1} gather the
proof of theorem \ref{thm:EvalOntoPi1}, which immediately follows from
the possibility of deforming Morse loops into (homotopical) Floer ones,
since the result is well known in the Morse setting.

\bigskip

Let $f$ be a Morse function having a single minimum at $\star$, and $g$ a
Riemannian metric on $M$ such that the pair $(f,g)$ is Morse Smale, and
all the relevant hybrid moduli spaces are cut out transversely.

Recall the Morse version of the definitions \ref{def:FloerLoopStep} and
\ref{def:FloerLoops}~: each choice of orientation on the unstable
manifold of each index $1$ Morse critical point defines a path we call a
Morse step (notice that since $f$ has a single minimum, all the steps are
in fact loops). Picking an arbitrary orientation for each such point $b$
allows to represent the associated Morse steps algebraically as
$b^{\pm}$, and hence to identify the group of Morse loops $\Gen{f}$ to
the free group generated by $\Crit_{1}(f)$. 

\subsection{From Floer to Morse loops}\label{sec:FromFloerToMorse}

\begin{lemma}\label{lem:PhiTop}
  There exist a group homomorphism $\Gen{H}\xrightarrow{\phi}\Gen{f}$ making
  the following diagram commutative~:
  \begin{equation}
    \label{eq:FloerToMorseDiagram}
    \xymatrix{
      \Gen{H} \ar[r]^{\eval}\ar[d]^{\phi} & \pi_{1}(M,\star)\ar@{=}^{\Id}[d]\\
      \Gen{f} \ar[r]^{\eval}             & \pi_{1}(M,\star)
    },
  \end{equation}
  i.e. such that 
  $$
  \forall w\in\Gen{H},\quad \eval(\phi(w))\sim\eval(w) \ \text{
    in }\ \pi_{1}(M,\star).
  $$  
\end{lemma}

\begin{proof}
  Pushing a generic topological loop $\gamma$ down by the flow of the
  Morse function $f$ deforms it into a Morse loop
  $\varphi_{f}^{+\infty}(\gamma)$, i.e. a word in the index $1$ critical
  points. Here generic means that the loop avoids the stable manifolds of
  all the index $k\geq2$ Morse critical points. Notice that the
  evaluation of the Floer steps form a finite collection of $1$-dimensional
  segments in $M$, and the stable manifolds of index $k\geq2$ critical
  points of $f$ are codimension $k\geq 2$ submanifolds. Therefore, for a
  generic (and even open dense) choice of $(f,g)$, Floer loops and such
  unstable manifolds do not meet, and we get a well defined  map 
  \begin{equation}
    \label{eq:HomotopyFlowDownBrut}
    \begin{array}{ccc}
      \GenBrut{H}&\xrightarrow{\phi}&\Gen{f}\\
      \gamma     &\mapsto           & \varphi^{+\infty}_{f}(\eval(\gamma)).
    \end{array}.  
  \end{equation}
  This map is obviously compatible both with the concatenation and
  cancellation rules, and hence induces a group homomorphism
  \begin{equation}
    \label{eq:HomotopyFlowDown}
    \begin{array}{ccc}
      \Gen{H}&\xrightarrow{\phi}&\Gen{f}
    \end{array}  
  \end{equation}
  Finally, $\phi$ is defined using a deformation and hence preserves the
  homotopy class, which means that the diagram
  \eqref{eq:HomotopyFlowDown} is commutative.
\end{proof}


Since the second row of \eqref{eq:FloerToMorseDiagram} is onto, theorem
\ref{thm:EvalOntoPi1} comes down to proving that any Morse loop can be
deformed into a Floer loop. Unfortunately, this deformation can not be
obtained like $\phi$ by pushing a loop down by a flow, since there is no
such thing as a Floer flow on the loop space.

However, a reinterpretation of $\phi$ in terms of moduli spaces and
crocodile walks can be given, allowing to generalize this definition to
the Floer setting and obtain a map in the
reverse direction. This reinterpretation is quickly sketched below, to
serve as an introduction for the reverse construction and to stress that
the two constructions are essentially the same, but will not be discussed
in details and could be skipped by the reader. The construction in the
reverse direction on the other hand, for which all the relevant technical
material was already introduced in section \ref{sec:HybridWalks}, will be
discussed in the next section.

\medskip

Consider a Floer loop step $\sigma=(\alpha,\beta,\beta',\alpha')$ through
some $y\in\CovOrbits_{1}(H)$. From our genericity assumption, the Morse
flow line $\gamma_{\alpha}$ (resp. $\gamma_{\alpha'}$) passing through
the center $\alpha(+\infty)$ (resp. $\alpha'(+\infty)$) of $\alpha$
(resp. $\alpha'$) ends at $\star$. Denote by $\bar{\alpha}$ (resp.
$\bar\alpha'$) the configuration obtained by appending to $\alpha$ (resp.
$\alpha'$) the piece of trajectory $\gamma_{\alpha}$ (resp.
$\gamma_{\alpha'}$) running from $\alpha(+\infty)$ (resp.
$\alpha'(+\infty)$) down to $\star$.

A crocodile walk can be run on the space of configurations consisting of
\begin{itemize}
\item 
  a trajectory from $y$ to some $x\in\Crit_{1}(f)\cup\CovOrbits_{0}(H)$,
\item 
  a trajectory from $x$ to $\star$, 
\item
  the (trivial~!) Morse trajectory $\star\in\MMorse(\star,\emptyset)$.
\end{itemize}
\begin{figure}[!ht]
  \centering
  \includegraphics[scale=.4]{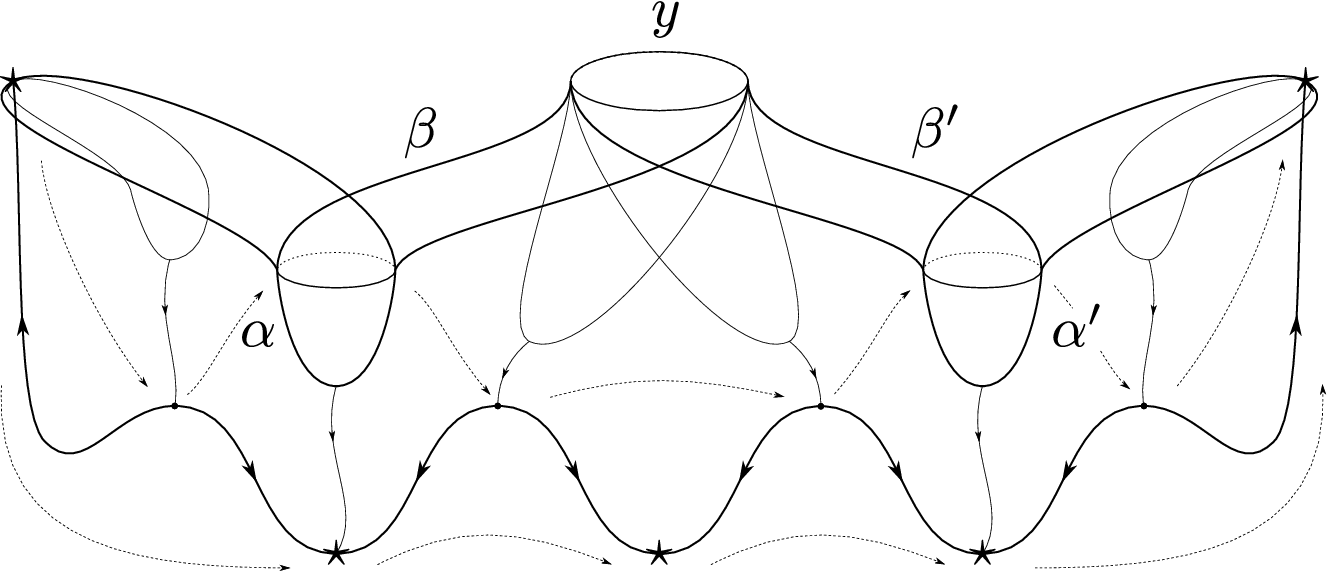}
  \caption{From Floer to Morse loops.}
  \label{fig:FloerToMorse}
\end{figure}
Starting with the configuration
$(\beta,\bar{\alpha},\star)$, the first upper step consists in gluing
$\beta$ and $\bar{\alpha}$. The other end of the associated component of
$\M(y,\star)$ is a configuration broken either at an index $0$
Hamiltonian orbit $x$, or at an index $1$ Morse critical point $b$  (see
figure \ref{fig:FloerToMorse}).

In the former case, the new configuration has to be
$(\beta',\bar\alpha',\star)$ (simply forget what happened to the Morse flow line 
and consider the definition of a Floer loop step).

In the latter, the lower part of the configuration is a Morse trajectory
$\gamma_{\pm}\in\M(b,\star)=\{\gamma_{-},\gamma_{+}\}$. The next (lower)
step consists in replacing $\gamma_{\pm}$ by $\gamma_{\mp}$ (recall from
the comments on definition \ref{def:FloerLoops} that this can be interpreted as
a step along the Morse moduli space $\MMorse(b,\emptyset)$). The next upper
step is then a gluing at $b$, and the same alternative holds again.

After a finite number of iterations of this process, the configuration
$(\beta',\bar\alpha',\star)$ has to be reached (from an upper step).
 Similarly to \eqref{eq:specialSteps}, moduli spaces involving interrupted
Morse trajectories give rise to the following special steps
\begin{gather*}
\sharp_{\bullet}(\beta',\bar{\alpha'},\star)=(\beta',\alpha')\\
\sharp^{\bullet}(\beta',\alpha')=(\beta,\alpha)\\
\sharp_{\bullet}(\beta,\alpha)=(\beta,\bar{\alpha},\star).
\end{gather*}
that close the walk orbit.

Let $W_{\sigma}$ be the orbit of the crocodile walk described above. The
lower non special steps in this orbit form a sequence of consecutive
Morse steps $\phi(\sigma)$.

Repeating this process for all the Floer loop steps $\sigma_{i}$ in a
Floer loop $\gamma=(\sigma_{1},\dots,\sigma_{N})$ (including the first
and last ones $\star\glueto(\beta_{\star},\alpha_{\star})$ and
$(\beta_{\star},\alpha_{\star})\glueto\star$ for which it still makes
sense), we get a sequence
$\phi(\gamma)=\phi(\sigma_{1})\dots\phi(\sigma_{N})$ which is a Morse
loop. This defines a map $\Gen{H}\to\Gen{f}$ which is a group homomorphism,
and it is a straightforward observation that this map is the same as
\eqref{eq:HomotopyFlowDown}.

Finally, observe that all the discs $\Delta(W_{\sigma_{i}})$ patch side
to side to form a disc endowed with an evaluation map realizing a
homotopy from $\eval(\gamma)$ to $\eval(\phi(\gamma))$.

\subsection{From Morse to Floer loops.}\label{sec:FromMorseToFloer}
\begin{lemma}\label{lem:PsiTop}
  There exist a group homomorphism $\Gen{f}\xrightarrow{\psi}\Gen{H}$ making
  the following diagram commutative~:
  \begin{equation}
    \label{eq:MorseToFloerDiagram}
    \xymatrix{
      \Gen{f} \ar[r]^{\eval}\ar[d]^{\psi} & \pi_{1}(M,\star)\ar@{=}^{\Id}[d]\\
      \Gen{H} \ar[r]^{\eval}             & \pi_{1}(M,\star)
    },
  \end{equation}
  i.e. such that 
  $$
  \forall w\in\Gen{f},\quad \eval(\psi(w))\sim\eval(w) \ \text{
    in }\ \pi_{1}(M,\star).
  $$  
\end{lemma}

\begin{proof}
  Let $b$ be an index $1$ critical point of $f$ and
  $(\gamma^{-}_{b},\gamma^{+}_{b})$ be the two Morse trajectories from
  $b$ to $\star$.

  Recall that the crocodile walk on the space
  $$
  B(b)=\bigcup_{y\in\CovOrbits_{1}(H)\cup\{\star\}}
  \M(b,y)\times\partial\M(y,\emptyset)
  $$
  was described in section \ref{sec:HybridWalks}. In particular, using
  the notations introduced there, it has a special orbit $W_{\star}(b)$
  (see figure \ref{fig:MorseToFloer}) of the from
  $$
  (\gamma_{b}^{-},\star),
  (\gamma_{b}^{-},\beta_{\star},\alpha_{\star}),
  (\gamma_{1},\beta_{1},\alpha_{1}), \dots
  (\gamma_{N},\beta_{N},\alpha_{N}),
  (\gamma_{b}^{+},\beta_{\star},\alpha_{\star}),%
  (\gamma_{b}^{+},\star).
  $$
  Recall from \eqref{eq:LowerCrocoSteps} that the lower steps in this
  orbit form a Floer loop. Denoting it by $\psi(b)$, we have 
  $$
  \psi(b)=\big((\star,\beta_{\star},\alpha_{\star}),
  \dots
  (\alpha_{2i-1},\beta_{2i-1},\beta_{2i},\alpha_{2i}),
  \dots
  (\alpha_{\star},\beta_{\star},\star)\big),
  $$
  and we get a map
  $$
  \GenBrut{f}\xrightarrow{\psi}\Gen{H},
  $$
  which is obviously compatible with both the concatenation and
  cancellation rules, and hence induces a group homomorphism
  \begin{equation}
    \label{eq:ImageOfGenerator}
    \Gen{f}\xrightarrow{\psi}\Gen{H}.
  \end{equation}

  Finally, the homotopy is provided by the disc $\Delta(W_{\star}(b))$
  and the evaluation map \eqref{eq:EvaluationOnCrocoHalfDisc}~: its
  restriction to the boundary is the concatenation of the Morse loop
  $\gamma^{-1}$ and the Floer loop $\psi(b)$.
\end{proof}

\subsection{Proof of theorem \ref{thm:EvalOntoPi1}.} 
\label{sec:ProofEvalOntoPi1}

\begin{proof}[Proof of theorem \ref{thm:EvalOntoPi1}]
  Theorem \ref{thm:EvalOntoPi1} is now a straightforward corollary of
  lemma \ref{lem:PsiTop}~: since the map on the first line of
  \eqref{eq:MorseToFloerDiagram} is onto, so has to be the map on the
  second.
\end{proof}

\section{Relations and fundamental groups}\label{sec:Relations}

It is natural to ask for a Floer theoretic interpretation of the
relations. It is the object of this section to provide a family of
generators of $\ker(\Gen{H}\xrightarrow{\eval}\pi_{1}(M,\star))$ that can
be expressed in terms of Floer and PSS moduli spaces.

\begin{remark}
  Although the subgroup of relations obviously only depends on
  $(H,J,\star,\cutoff)$, the proposed generators will depend on the
  choice of an additional auxiliary Morse function (and  metric). Being
  able to \emph{a priori} select  a finite family that would generate the
  relations and depend on $(H,J,\star,\cutoff)$ only would be more
  satisfactory but is unfortunately unclear.

  Moreover, resorting to a Morse function may seem to weaken the
  construction since Morse functions already give full access to the
  fundamental group. It should
    be observed however, that the Morse function is used in a different
  way from the usual one here~: it is used to define hybrid moduli
  spaces, mixing Morse and Floer objects, and the present description of
  the relations depicts how the Morse relations  have
  to be transported from the Morse to the Floer setting by some
  configurations of $1$-dimensional hybrid moduli spaces, and hence may
  gather some non trivial information.  
\end{remark}

\subsection{Floer-Morse-Floer relations}

Given a Floer loop $\gamma\in\Gen{H}$,  observe that the evaluations of
$\psi(\phi(\gamma))$ and $\gamma$ are homotopic (since both $\phi$ and
$\psi$ preserve the homotopy class), so that
$\gamma^{-1}\psi(\phi(\gamma))$ is always a relation.
\begin{definition}
  Define the set of ``Floer-Morse-Floer relations'' as
  $$
  \RelHomGen(H)=\{ \gamma^{-1}\psi(\phi(\gamma)),\ \gamma\in\Gen{H}\}.
  $$
\end{definition}

\begin{figure}[!ht]
  \centering
  \includegraphics[scale=.7]{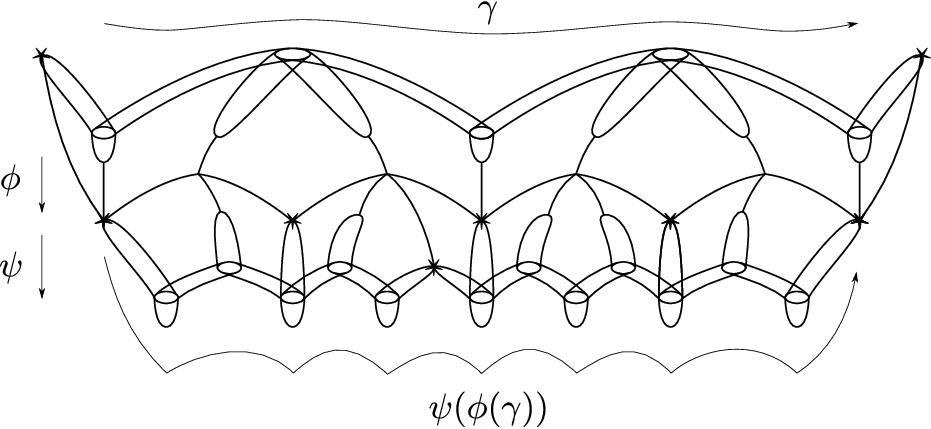}
  \caption{A relation in  $\RelHomGen(H)$.}
  \label{fig:HomotopyRelation}
\end{figure}

\begin{remark}
  The notation $\RelHomGen(H)$ only highlights the dependency on $H$ but
  this set depends in fact on all the auxiliary data
  $(H,J,f,g,\star, \cutoff)$.
\end{remark}

\begin{remark}
  The set $\RelHomGen(H)$ is not finite since there is one relation for
  each Floer loop. However, it is induced by the substitution rule at the
  Floer loop steps level
  $$
  \sigma \to \psi(\phi(\sigma))
  $$
  which is finite.
\end{remark}
Since $\phi$ and $\psi$ are described in terms of crocodile walk, so can
these relations. Glossing over the moduli spaces involving $\star$,
consider a Floer loop step $\sigma$ through some
$y_{0}\in\CovOrbits_{1}(H)$. The configurations consisting of 
\begin{itemize}
\item 
  a trajectory $\delta$ from $y_{0}$ to some
  $z\in\Crit_{1}(f)\cup\CovOrbits_{0}(H)$
\item
  a trajectory $\gamma$ from $z$ to some
  $y\in\{\star\}\cup\CovOrbits_{1}(H)$,
\item
  a trajectory $\beta$ from $y$ to some $x\in\CovOrbits_{0}(H)$,
\item
  a trajectory $\alpha\in\M(x,\emptyset)$
\end{itemize}
are broken three times and hence present $3$ levels where to perform a
gluing (or more generally a step). The relation is obtained by running
the crocodile walk on the two lower gluings ``from $\star$ to $\star$'',
then performing one upper gluing, and repeating this process.

\subsection{Relations associated to Morse $2$-cells}

Given an index $2$ Morse critical point $c$ of $f$, let $\rho_{c}$ be the
relation in $\Gen{f}$ given by the boundary of the associated $2$ cell and define~:
\begin{gather}
  \label{eq:2CellsMorseRelations}
  \RelMorseGen(f) = \{\rho_{c},\ c\in\Crit_{2}(f)\}
  \\
  \label{eq:2CellsFloerRelations}
  \RelMorseGen(H) = \{\psi(\rho_{c}),\ c\in\Crit_{2}(f)\}
\end{gather}

\begin{remark}
  The notation $\RelMorseGen(H)$ only highlights the dependency on $H$ but
  this set depends in fact on all the auxiliary
  data $(H,J,f,g,\star,\cutoff)$.
\end{remark}

\begin{figure}[!ht]
  \centering
  \includegraphics[scale=.4]{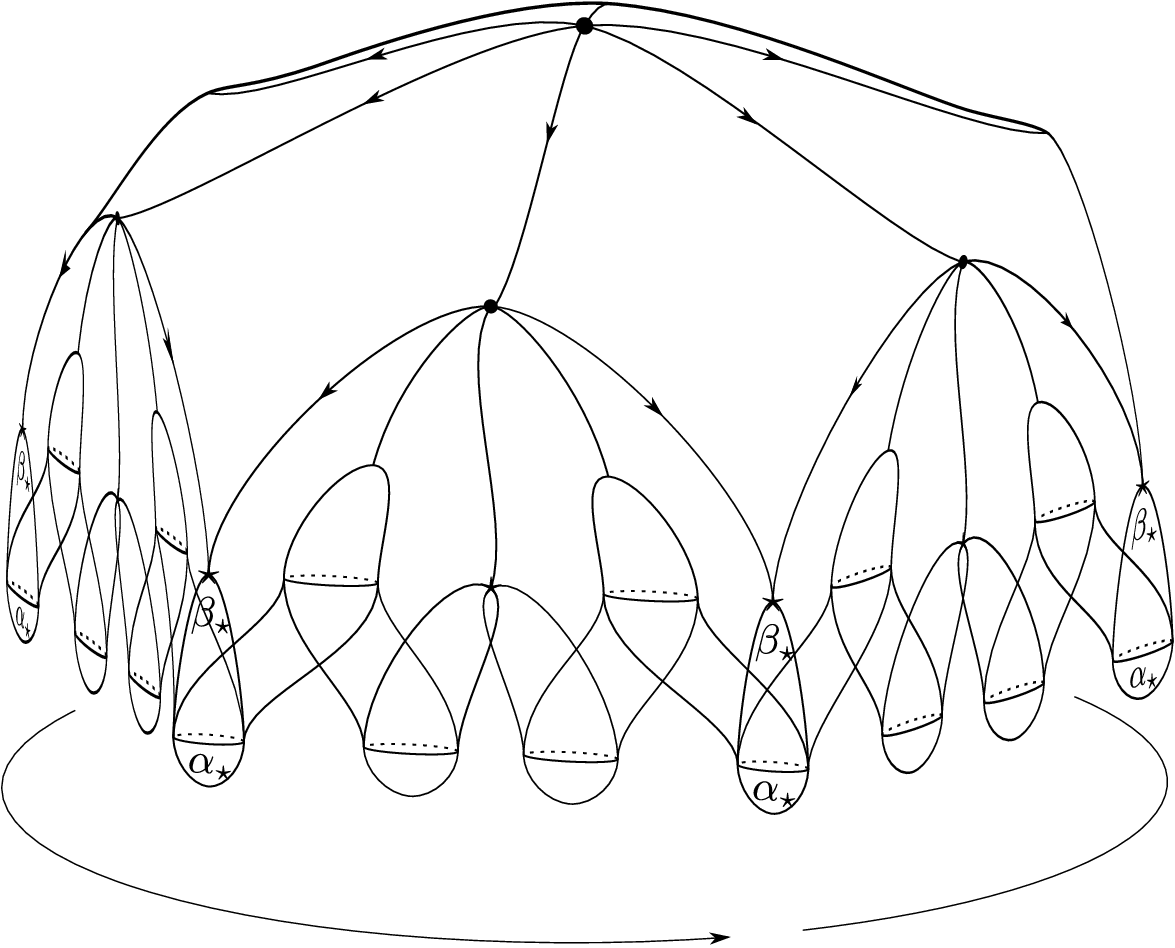}
  \caption{A Floer relation associated to a Morse $2$-cell.}
  \label{fig:Morse2CellRelation}
\end{figure}

\begin{remark}
  For all $\rho\in\RelMorseGen(H)$ we have $\eval(\rho)= 1$ in
  $\pi_{1}(M,\star)$, so that 
    $\RelMorseGen(H)$ is indeed a collection of relations.
\end{remark}
\begin{remark}
  These relations can also be described in terms of crocodile walks.
  Glossing over the moduli spaces involving $\star$ again, consider an
  index $2$ Morse critical point $c$ and the configurations consisting of
  \begin{itemize}
  \item 
    a trajectory $\delta$ from $c$ to some $z\in\Crit_{1}(f)$,
  \item 
    a trajectory $\gamma$ from $z$ to some
    $y\in\CovOrbits_{1}(H)\cup\Crit_{0}(f)$,
  \item 
    a trajectory $\beta$ from $y$ to some $x\in\CovOrbits_{0}(H)$,
  \item 
    a trajectory $\alpha\in\M(x,\emptyset)$.
  \end{itemize}
  The relation associated to $c$ can be obtained using the same algorithm
  as discussed previously, i.e. running the crocodile walk on the two
  lower levels ``from $\star$ to $\star$'', then performing one upper
  step, and repeating this process.
\end{remark}

\subsection{Fundamental group}
\bigskip
  
We can finally define the subgroup of relations~:
\begin{definition}
Denote by $\Rel{H}$ the normal subgroup of $\Gen{H}$ generated by
$\RelHomGen(H)$ and $\RelMorseGen(H)$~:
$$
\Rel{H} = <\RelHomGen(H),\RelMorseGen(H)>.
$$
\end{definition}
\begin{remark}
The group $\Rel{H}$ obviously depends on $(H,J,\star,\cutoff)$,
but it is a consequence of theorem \ref{thm:IsomorphismToPi1} that it
does not depend on $(f,g)$.
\end{remark}

\begin{definition}
  The Floer fundamental group associated to $(H,J,\star,\cutoff)$ is defined
  as the group
  $$
  \pi_{1}(H,\star)=\Gen{H}/\Rel{H}.
  $$
\end{definition}

\begin{remark}
  The group should be denoted as $\pi_{1}(H, J, \star, \cutoff)$
  to emphasize its dependency on all the auxiliary data but it is kept
  implicit to reduce notations.
\end{remark}

In the same way, let $\Rel{f}=<\RelMorseGen(f)>$ be the normal subgroup of $\Gen{f}$
generated by the boundary of Morse $2$-cells,
$\pi_{1}(f,\star):=\Gen{f}/\Rel{f}$ and recall the well known fact that
$\pi_{1}(f,\star)\simeq\pi_{1}(M,\star)$.

\begin{theorem}\label{thm:IsomorphismToPi1}
  The evaluation induces a group isomorphism
  $$
  \xymatrix{%
    \pi_{1}(H,\star)\ar[r]^{\eval}&\pi_{1}(M,\star).%
  }
  $$
  The maps $\phi$ and $\psi$ also induce isomorphisms which are
  inverse one of the other~:  
  $$
  \xymatrix{%
    \pi_{1}(H,\star)\ar@<2pt>[r]^{\phi}&\ar@<2pt>[l]^{\psi}\pi_{1}(f,\star). }
  $$
\end{theorem}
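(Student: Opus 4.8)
The plan is to deduce Theorem \ref{thm:IsomorphismToPi1} from the two commuting squares in Lemmas \ref{lem:PhiTop} and \ref{lem:PsiTop} together with the classical identification $\eval:\pi_{1}(f,\star)\xrightarrow{\sim}\pi_{1}(M,\star)$. The key point is that everything reduces to a diagram chase once I know three things: (i) $\phi$ and $\psi$ descend to the quotients $\pi_{1}(H,\star)$ and $\pi_{1}(f,\star)$; (ii) the composite $\psi\circ\phi$ is the identity on $\pi_{1}(H,\star)$; and (iii) the composite $\phi\circ\psi$ is the identity on $\pi_{1}(f,\star)$. Given (i)--(iii), $\phi$ and $\psi$ are mutually inverse isomorphisms on the quotients, and since $\eval\circ\psi$ equals $\eval$ on $\Gen{f}$ (by Lemma \ref{lem:PsiTop}) which descends to the classical isomorphism $\pi_{1}(f,\star)\xrightarrow{\sim}\pi_{1}(M,\star)$, the induced map $\eval:\pi_{1}(H,\star)\to\pi_{1}(M,\star)$ is the composite of two isomorphisms, hence an isomorphism.

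First I would check (i). For $\phi$: by definition $\RelHomGen(H)=\{\gamma^{-1}\psi(\phi(\gamma))\}$, and applying $\phi$ gives $\phi(\gamma)^{-1}\phi(\psi(\phi(\gamma)))$, which lies in $\Rel{f}$ once (iii) is known (so this part is slightly circular and must be sequenced after establishing (iii), or else I argue directly that $\phi(\psi(w))w^{-1}\in\Rel{f}$ for $w$ in the image of $\phi$, which is what (iii) provides); and $\RelMorseGen(H)=\{\psi(\rho_{c})\}$ maps under $\phi$ to $\phi(\psi(\rho_{c}))$, again controlled by (iii) and the fact that $\rho_{c}$ generates $\Rel{f}$. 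For $\psi$: $\psi(\RelMorseGen(f))=\{\psi(\rho_{c})\}=\RelMorseGen(H)\subset\Rel{H}$ by definition, so $\psi$ descends immediately. So the cleanest order is: establish (ii) and (iii) at the level of the free groups $\Gen{H}$, $\Gen{f}$ \emph{modulo the relation subgroups}, then deduce (i), then assemble.

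For (ii), I would argue that for any Floer loop $\gamma$, the element $\gamma^{-1}\psi(\phi(\gamma))$ is by definition in $\RelHomGen(H)\subset\Rel{H}$, so $\psi\circ\phi=\Id$ on $\pi_{1}(H,\star)$ is immediate. This is the whole reason $\RelHomGen(H)$ was introduced. For (iii), I would show that for an index $1$ generator $b\in\Crit_{1}(f)$, the Morse loop $\phi(\psi(b))$ differs from $b$ only by an element of $\Rel{f}$. The geometric input: $\psi(b)$ is the Floer loop obtained from the lower steps of the crocodile walk orbit $W_{\star}(b)$, whose associated disc $\Delta(W_{\star}(b))$ realizes a homotopy between $\gamma_b^{-}\gamma_b^{+}$ (a nullhomotopic concatenation, since both Morse trajectories run from $b$ to $\star$ and together bound the $2$--cell... more precisely they form the attaching loop only up to the other index-$2$ cells) and $\eval(\psi(b))$. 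Then applying $\phi$ and using the homotopy disc of Lemma \ref{lem:PhiTop}, $\eval(\phi(\psi(b)))\sim\eval(\psi(b))\sim\eval(b)$ in $\pi_1(M,\star)$; combined with the fact (classical Morse theory) that two Morse loops with the same image in $\pi_1(M,\star)$ differ by a product of $\rho_c$'s, we get $\phi(\psi(b))\equiv b$ in $\pi_1(f,\star)$. \textbf{The main obstacle} I anticipate is precisely this last step: one needs the crocodile-walk discs $\Delta(W_{\star}(b))$ and $\Delta(W_\sigma)$ to patch together coherently into a genuine homotopy, and one needs the standard fact that $\pi_1(f,\star)=\Gen{f}/\langle\rho_c\rangle$ is honestly $\pi_1(M,\star)$ with $\rho_c$ exhausting the relations — so that "$\eval$-equal" upgrades to "$\Rel{f}$-equal". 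Handling the $\star$-steps (the special steps $(\beta_\star,\alpha_\star)\glueto\star$ and $(\gamma_+,\star)\glueto(\gamma_-,\star)$) carefully inside these discs, and making sure the $-\frac12$ bookkeeping in the definition of the special component does not disrupt the combinatorics, is where the real work lies; the surrounding diagram chase is then routine.
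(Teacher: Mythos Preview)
Your outline is correct and is essentially the paper's approach, but you are manufacturing difficulties that are not there. The ``main obstacle'' you flag --- patching crocodile-walk discs to build a geometric homotopy witnessing $\phi(\psi(b))\equiv b$ --- is entirely unnecessary, and the circularity you worry about in (i) dissolves for the same reason.

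The point is that the classical Morse fact you already invoke, namely that $\eval:\pi_{1}(f,\star)\to\pi_{1}(M,\star)$ is an isomorphism (equivalently $\Rel{f}=\ker\big(\eval:\Gen{f}\to\pi_{1}(M,\star)\big)$), does \emph{all} the work on the Morse side. For (i): to see $\phi(\Rel{H})\subset\Rel{f}$, simply note that any generating relation $\rho\in\RelHomGen(H)\cup\RelMorseGen(H)$ satisfies $\eval(\rho)=1$, hence $\eval(\phi(\rho))=\eval(\rho)=1$ by Lemma~\ref{lem:PhiTop}, so $\phi(\rho)\in\ker\eval=\Rel{f}$. No appeal to (iii) is needed, so there is no circularity. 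For (iii): from the two commuting squares in Lemmas~\ref{lem:PhiTop} and~\ref{lem:PsiTop} you get $\eval\circ\phi\circ\psi=\eval$ on $\Gen{f}$; since $\eval$ is injective on $\pi_{1}(f,\star)$, this forces $\phi\circ\psi=\Id$ there in one line. No further discs need to be built or patched: the discs $\Delta(W_{\star}(b))$ and $\Delta(W_{\sigma})$ already did their job inside the proofs of Lemmas~\ref{lem:PhiTop} and~\ref{lem:PsiTop}, and the theorem itself is a pure diagram chase on top of those lemmas plus the classical Morse identification. Your step (ii) is exactly as in the paper.
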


\begin{proof}

  \begin{enumerate}
  \item 
    Compatibility with the relations for $\psi$~:\\
    observe that $\Rel{f}=<\RelMorseGen(f)>$ and
    $\psi(<\RelMorseGen(f)>)\ \subset\ <\psi(\RelMorseGen(f))>$. Since
    $\psi(\RelMorseGen(f))=\RelMorseGen(H)$ and
    $<\RelMorseGen(H)>\subset\Rel{H}$, we have 
    $$
    \psi(\Rel{f})\subset \Rel{H}.
    $$

  \item
    Compatibility with the relations for $\phi$~:\\
    Similarly,
    $\phi(\Rel{H})\subset<\phi(\RelHomGen(H))\cup\phi(\RelMorseGen(H))>$.
    But for $\rho\in\RelHomGen(H))\cup\RelMorseGen(H)$, we have
    $\eval(\phi(\rho))=\eval(\rho)=1$, so that
    $$
    \phi(\Rel{H})\subset\Rel{f}.
    $$

  \item $\phi\circ\psi=\Id_{\pi_{1}(f,\star)}$~:\\
    This follows directly from $\eval\circ\phi\circ\psi=\eval$. In
    particular this implies surjectivity of $\phi$ and injectivity of $\psi$.

  \item $\psi\circ\phi=\Id_{\pi_{1}(H,\star)}$~:\\
    This is built in the definition of the relations $\RelHomGen(H)$~:
    for $\gamma\in\Gen{H}$, we have $\gamma^{-1}\psi(\phi(\gamma))\in
    \RelHomGen(H)$, so that $\gamma = \psi(\phi(\gamma))$ in
    $\pi_{1}(H,\star)$. This implies injectivity of $\phi$ and
    surjectivity of $\psi$.

  \item 
    $\ker\big(\eval:\Gen{H}\to\pi_{1}(M,\star)\big)=\Rel{H}$~:\\
    The relation $\Rel{H}\subset\ker\eval$ is obvious since this is true
    for all the generators of $\Rel{H}$. Conversely, let
    $\gamma\in\Gen{H}$ such that $\eval(\gamma)=1$ in $\pi_{1}(M,\star)$.
    Then $\eval(\phi(\gamma))=1$ so that $\phi(\gamma)\in \Rel{f}$. As a
    consequence 
    $$
    \psi(\phi(\gamma))\in\psi(\Rel{f})\subset\Rel{H}.
    $$
    Finally, since $\gamma^{-1}\psi(\phi(\gamma))\in\Rel{H}$, we have
    $\gamma\in\Rel{H}$.

    This ends the proof that
    $\pi_{1}(H,\star)\xrightarrow{\eval}\pi_{1}(M,\star)$ is injective,
    and hence an isomorphism since it was already proven to be surjective.
  \end{enumerate}
\end{proof}

\section{Application and proof of theorem \ref{thm:AtLeastOneOrbit}.}

\subsection{Generating $\pi_{1}(M)$ with steps}
\label{sec:ProofEnoughSteps} The theorem \ref{thm:EnoughSteps} is a
direct consequence of a weaker version of theorem \ref{thm:EvalOntoPi1}
where Floer loops are replaced by Floer steps.

\begin{proof}[Proof of theorem \ref{thm:EnoughSteps}]
  Fix a generic set of data $(H,J,\star,\cutoff,f,g)$ where $\star$
  is the single minimum of the Morse function $f$. Let $\sigma_{\star}$
  denote the special step $\star\glueto(\beta_{\star},\alpha_{\star})$.
  Let $\Steps(H)$ be the free group generated by all the Floer loop steps
  but the special one.
 
  Recall that the map $\phi$ was defined at the step level~: 
  \begin{equation}
    \label{eq:PhiOnSteps}
    \xymatrix{
      \Steps(H)\ar[r]^{\phi}&\Gen{f}\ar[r]^{\eval}&\pi_{1}(M,\star)\\
    }
  \end{equation}
  (notice that although Floer loop steps evaluate as free paths in $M$
  and not necessarily as based loops, they are still pushed down into
  Morse based loops by $\phi$ because the Morse function was chosen to
  have only one index $0$ critical point).

  Notice that the left hand side of \eqref{eq:MainEstimate} is nothing
  but the number of generators of $\Steps(H)$, so that theorem
  \ref{thm:EnoughSteps} reduces to proving that in \eqref{eq:PhiOnSteps},
  $\eval\circ\phi$ is onto.

  Observe now that in a loop $w\in\GenBrut{H}$, the only occurrences of
  $\sigma_{\star}$ and $\sigma_{\star}^{-1}$ are~:
  \begin{itemize}
  \item 
    $\sigma_{\star}$ at the beginning of $w$,
  \item 
    $\sigma_{\star}^{-1}$ at the end of $w$,
  \item 
    possible pairs $(\sigma_{\star}^{-1}\sigma_{\star})$ within $w$.
  \end{itemize}

  In particular, this means that removing $\sigma_{\star}$ and
  $\sigma_{\star}^{-1}$ at the ends of the loops defines an injective group
  homomorphism
  $$
  \xymatrix{
    \Gen{H}\ar@{^(->}[r]^{\tau} & \Steps(H)
  }.
  $$

  We end up with the following commutative diagram~: 
  \begin{equation}
    \label{eq:StepsDiagram}
    \xymatrix{
      \Gen{H}\ar[r]^{\phi}\ar@{^(->}[d]^{\tau} & \Gen{f}\ar[d]^{\tau'}\ar[r]^{\eval}&
      \pi_{1}(M,\star)\ar[d]^{\tau''}\\
      \Steps(H)\ar[r]^{\phi}&\Gen{f}\ar[r]^{\eval}&\pi_{1}(M,\star)\\
    }
  \end{equation}
  where $\tau'$ and $\tau''$ are the conjugation by
  $\phi(\sigma_{\star}^{-1})$ and $\eval(\phi(\sigma_{\star}^{-1}))$
  respectively. In particular, surjectivity of the composition of the
  maps appearing on the first row implies that of the second.
\end{proof}

\subsection{Proof of theorem
  \ref{thm:AtLeastOneOrbit}.}\label{sec:ProofAtLeastOneOrbit}

In this section, we want to prove theorem \ref{thm:AtLeastOneOrbit},
namely that if $\pi_{1}(M)\neq\{1\}$, then every non-degenerate
Hamiltonian $H$ should have at least one contractible $1$-periodic orbit
of index $1$ (i.e. Conley-Zehnder index $1-n$) with non vanishing
multiplicity. 

This is not a consequence of the above construction, but uses similar
ideas arranged slightly differently~: it is based on a variant of the
crocodile walk to patch (suspensions) of $1$-dimensional PSS moduli
spaces together and fill any Morse loop with a disc when there are no
index $1$ Hamiltonian orbit. 

\medskip

Let $H$ be a non degenerate Hamiltonian, and pick a triple $(J,f,g)$
where $J$ is a (possibly time dependent) almost complex structure
compatible with $\omega$, $f$ a Morse function with a single minimum
denoted by $\star$ and $g$ a Riemannian metric such that
$(H,J,\star,\cutoff,f,g)$ satisfies our transversality
assumptions. We pick coherent orientations on all the $0$ and
$1$-dimensional moduli spaces $\M(b,x)$ for $b\in\Crit_{0,1}(f)$ and
$x\in\CovOrbits_{0}(H)\cup\Crit_{0}(f)$.

Suppose $H$ has no index $1$ orbit, or more precisely that it has no
index $1$ orbit with non vanishing multiplicity~: this means there are no
Floer trajectories from an index $1$ to an index $0$ orbit that admits at
least one augmentation. For convenience, let 
$$
\CovOrbits_{0}(H)^{*}=\{x\in\CovOrbits_{0}(H), \M(x,\emptyset)\neq\emptyset\}.
$$ 
Our assumption can then be written as~:
$$
\forall y\in\CovOrbits_{1}(H), \forall x\in\CovOrbits_{0}(H)^{*},
\ \ \M(y,x)=\emptyset.
$$

Let $b$ be an index $1$ Morse critical point, such that the unstable
manifold of $b$ defines a non trivial loop $\gamma$ in $M$, and let
$\gamma_{-}$ and $\gamma_{+}$ be the two Morse flow lines rooted at $b$.
For convenience, we consider $\gamma$ as based at $b$ and let~:
$$
\gamma = \gamma_{+}\cdot\gamma_{-}^{-1}.
$$

For $x\in\CovOrbits_{0}(H)^{*}$ consider the space 
$$
B(b) = \{\gamma_{-},\gamma_{+}\}\times\M(\star,x)
$$
Since $H$ has no index $1$ orbit related to $x$ by a Floer trajectory,
$B(b)$ is the set of all broken hybrid trajectories from $b$ to $x$.

In particular, gluing $\gamma_{\pm}$ with a trajectory
$\beta\in\M(\star,x)$ defines a $1$-dimensional family of trajectories
from $b$ to $x$ whose other end has to be of the same form. This defines
a one to one correspondence~:
$$
\begin{array}[b]{cccl}
  B(b)&\xrightarrow{\sigma} &B(b)\\
(\gamma_{\epsilon},\beta)&\mapsto&(\gamma_{\epsilon'},\beta')
&\text{ such that }(\gamma_{\epsilon},\beta)\glueto(\gamma_{\epsilon'},\beta')
\end{array}.
$$
Permuting $\gamma_{-}$ and $\gamma_{+}$ defines another one to one
correspondence 
$$
\begin{array}[b]{cccl}
  B(b)&\xrightarrow{\tau} &B(b)\\
(\gamma_{\pm},\beta)&\mapsto&(\gamma_{\mp},\beta)
\end{array}.
$$
Notice both $\sigma$ and $\tau$ reverse the orientation.

Consider now an orbit of $\rho=\tau\circ\sigma$. It has to be cyclic, and
 is a sequence 
$$
(\gamma_{\epsilon_{1}},\beta_{1}),\dots,(\gamma_{\epsilon_{k}},\beta_{k}),
$$
(with $\epsilon_{i}=\pm1$) such that
$(\gamma_{\epsilon_{i}},\beta_{i})\glueto(\gamma_{-\epsilon_{i+1}},\beta_{i+1})$, with the convention that
$(\gamma_{\epsilon_{k+1}},\beta_{k+1})=(\gamma_{\epsilon_{1}},\beta_{1})$).

To each gluing, is associated a $1$-dimensional space, and we let
$\Sigma_{i}$ be its suspension. It is a diamond, endowed with an
evaluation map to $M$ that coincides with
\begin{itemize}
\item 
$\gamma_{\epsilon_{i}}$ on the  upper left edge,
\item 
$\beta_{i}$ on the  lower left edge,
\item 
$\gamma_{-\epsilon_{i+1}}$ on the upper right edge,
\item 
$\beta_{i+1}$ on the lower right edge.
\end{itemize}

Gluing all these diamonds side by side along the lower edges provides a disc,
endowed with a continuous evaluation map to $M$, whose restriction to the
boundary is
$$
\gamma_{\epsilon_{1}}^{-1}\gamma_{-\epsilon_{2}}\gamma_{\epsilon_{2}}^{-1}\dots
\gamma_{-\epsilon_{k}}\gamma_{\epsilon_{k}}^{-1}\gamma_{-\epsilon_{1}}.
$$

This loop is therefore trivial, but
$\gamma_{-\epsilon_{i}}\gamma_{\epsilon_{i}}^{-1} =
\gamma^{-\epsilon_{i}}$ so $\gamma^{\sum\epsilon_{i}}=1$. Moreover, the
orientation of the couple $(\gamma_{\epsilon_{i}},\beta_{i})$ is constant
with respect to $i$ (because one moves from one to the next by two
gluings and the orientation is reversed by each gluing) and it can be
supposed to be positive without loss of generality. This means that
$\epsilon_{i}=\orient(\beta_{i})$ for all $i$ and hence
$\sum\epsilon_{i}=\sum\orient(\beta_{i})$ (where $\orient(\beta_{i})$ is
the orientation of $\beta_{i}$). As a consequence, we get 
$$
 \gamma^{\sum \orient(\beta_{i})}\sim 1 \text{ in }\pi_{1}(M,\star).
$$

Observe now that the orbits of $\rho$ induce a partition of
$\M(\star,x)$, so repeating this for all the orbits $O_{1},\dots,O_{N}$
of $\rho$, we derive
$$
\gamma^{\sum_{O_{1}} \orient(\beta_{i})}\dots\gamma^{\sum_{O_{N}} \orient(\beta_{i})} 
=\gamma^{\sum_{\beta\in\M(\star,x)}\orient(\beta)}
=\gamma^{n_{x}}\sim 1
\text{ in }\pi_{1}(M,\star),
$$ 
where $n_{x}=\Cardalg\M(\star,x)$ is the algebraic number of elements in
$\M(\star,x)$ (i.e. the sum of signs $\pm1$ associated to each
  element in $\M(\star,x)$ according to a choice of coherent
  orientations). Recall this number is the component along $x$ of the
image of $\star$ under the PSS homomorphism from the Morse to the Floer
complex (using $\Z$ coefficients)~:
$$
PSS_{MF}(\star) = \sum_{x\in\CovOrbits_{0}(H)}n_{x}\, x.
$$

Let $PSS_{FM}$ be the PSS homomorphism from the Floer to the Morse complex.
Since $PSS_{FM}\circ PSS_{MF}$ induces the identity in homology, we have 
$$
\sum_{x\in\CovOrbits_{0}(H)}n_{x}\, m_{x} =1,
$$
where $m_{x}=\Cardalg(\M(x,\emptyset))$. In particular we also have
$\sum_{x\in\CovOrbits_{0}(H)^{*}}n_{x}\, m_{x} =1  $.

As a consequence we have 
$$
\gamma=\gamma^{\sum_{x\in\CovOrbits_{0}(H)^{*}} n_{x}m_{x}}= 1 \text{ in }\pi_{1}(M,\star).
$$
This is a contradiction, since we supposed $\gamma$ was non trivial. This
ends the proof of theorem \ref{thm:AtLeastOneOrbit}.

\section{Stable Morse version}\label{sec:StableMorse}

To some extent, a stable Morse function can be considered as a simplified
finite dimensional model for the action functional on the free loop
space. This section is devoted to a quick sketch of the analogue of the
main construction in the stable Morse setting. Although it would deserve
a dedicated discussion, it is only addressed here to shed some light on
the phenomena encountered along the construction that do not appear in
the usual Morse setting, like the existence of several steps through the
same critical point or of steps through $\star$. Therefore, we limit
ourselves to the defining the relevant moduli spaces, and
leave all the proofs to the reader.

\subsection{Setting}
Let $M$ be a smooth closed manifold of dimension $n$, $\star$ a point in
$M$, $N_{\pm}$ be two integers, $N=N_{+}+N_{-}$ and $H$ a Morse function
on $M\times\R^{N}$ that is quadratic at infinity with signature
$(N_{+},N_{-})$. Namely, we suppose that there is a compact set $K$ such
that $\forall(m,u,v)\in (M\times\R^{N_{+}}\times\R^{N_{-}})\setminus K,
H(m,u,v)=\Vert u \Vert^{2}-\Vert v\Vert^{2}$.

For convenience, the Morse index will be shifted by $N_{-}$ and we let,
for a critical point $x$ of $H$~:
$$
|x|=\mu(x)-N_{-}
$$
where $\mu$ denotes the usual Morse index.

We also pick a Riemannian metric $g$ on $M\times\R^{N}$ and denote by
$\phi^{t}$ the associated negative gradient flow of $H$.

\subsection{Moduli spaces}
For $x,y\in\Crit(H)$ the usual space of trajectories from $y$ to $x$ can
be described as
\begin{align*}
  \M(y,x)&=\big(W^{u}(y)\cap W^{s}(x)\big)_{/\R}.
\end{align*}
The counterpart of the ``augmentations'' required for the
construction are now trajectories ``hitting $M\times\R^{N_{+}}$'', namely
\begin{align*}
  \M(y,\emptyset)&=W^{u}(y)\cap \big(M\times\R^{N_{+}}\times\{0\}\big),
\end{align*}
and the counterpart of the evaluation map $u\mapsto u(+\infty)$ is the
projection $M\times\R^{N_{+}}\xrightarrow{\pi_{+}} M$~:
$$
\begin{array}{ccl}
  \M(y,\emptyset)&\xrightarrow{\eval}&M\\
  (m,u,0)&\mapsto&m
\end{array}.
$$

Similarly, the spaces
\begin{align*}
  \M(\star,x)&=\big(\{\star\}\times\R^{N_{-}}\big)\cap W^{s}(x),\\
  \M(\star,\emptyset)&=\{(p,R)\in\{\star\}\times\R^{N_{-}}\times ]0,+\infty[,
\phi^{R}(p)\in M\times\R^{N_{+}} \}
\end{align*}
are the counterparts of the spaces that were denoted by the same notations in
the Floer setting. 

\begin{figure}[!ht]
  \centering
  \includegraphics[scale=.4]{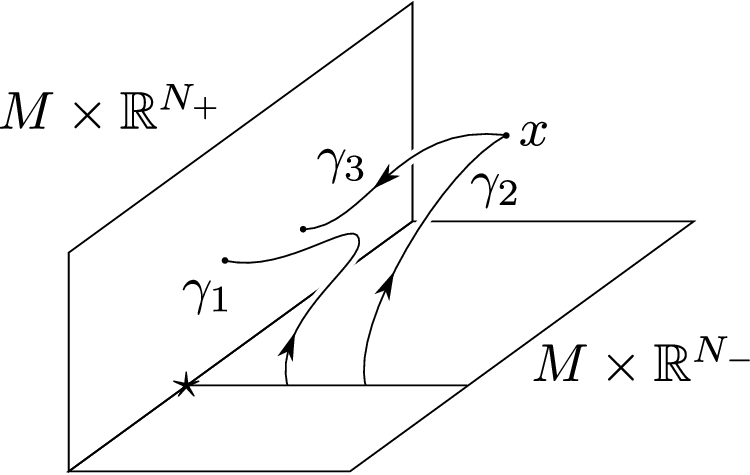} %
  \caption{Augmentations and other configurations in the stable 
    Morse setting~:
    $\gamma_{1}\in\M(\star,\emptyset)$,
    $\gamma_{2}\in\M(\star,x)$,    
    $\gamma_{3}\in\M(x,\emptyset)$.}
\label{fig:StableMorse}
\end{figure}

The triple $(H,g,\star)$ is supposed to be chosen generically so that all
the considered moduli spaces are cut out transversely. In this situation,
they are all smooth manifolds of dimension~:
\begin{align*}
  \dim\M(y,x)&=|y|-|x|-1\\
  \dim\M(y,\emptyset)&=|y|\\
  \dim\M(\star,x)&=-|x|\\
  \dim\M(\star,\emptyset)&=1
\end{align*}

Moreover, they are compact up to breaking at intermediate critical points
(although $M\times\R^{N}$ is not compact), and the gluing construction also
makes sense in this setting. 

Notice in particular that $\M(\star,\emptyset)$ still has a projection
$\pi$ to $[0,+\infty]$, and that $\pi^{-1}(\{0\})$ consists of exactly
one point, namely $\star$ itself, since $\pi^{-1}(\{0\}) =
(\{\star\}\times\R^{N_{-}})\cap(M\times\R^{N_{+}}) = \{(\star,0)\}$.

With these notations, the definitions given in the Floer setting make
sense literally and give rise to suitable notions of ``stable Morse steps and
loops'' and to the associated group $\Gen{H}$.

\bigskip

Picking now a Morse function $f$ on $M$ having a single minimum at $\star$
and a metric on $M$, one can consider the following hybrid moduli spaces
(see figure \ref{fig:MorseStableMorse})~:
\begin{align*}
\M(b,x)&=\pi_{-}^{-1}(W^{u}_{f}(b))\cap W^{s}(x),\\
\M(x,b)&=W^{u}(x)\cap \pi_{+}^{-1}(W^{s}_{f}(b)),
\end{align*}
where $W^{u}_{f}$ and $W^{s}_{f}$ denote the stable and unstable
manifolds with respect to the negative gradient of $f$ in $M$, and
$\pi_{\pm}$ are the projections
$M\times\R^{N_{\pm}}\xrightarrow{\pi_{\pm}}M$.

\begin{figure}[!ht]
  \centering
  \includegraphics[scale=.4]{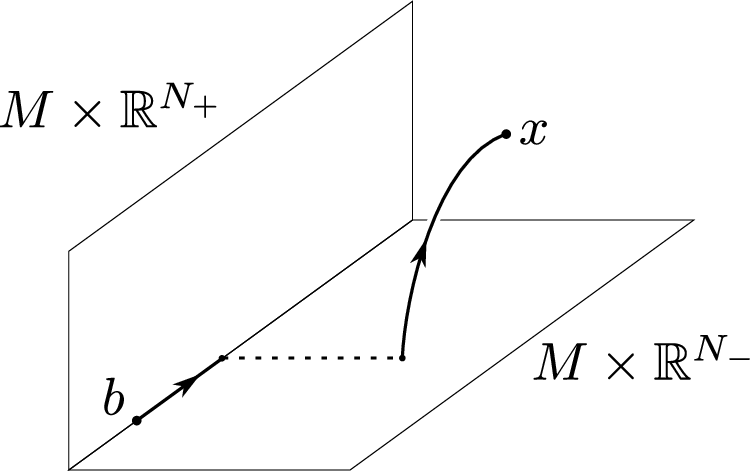} %
  \includegraphics[scale=.4]{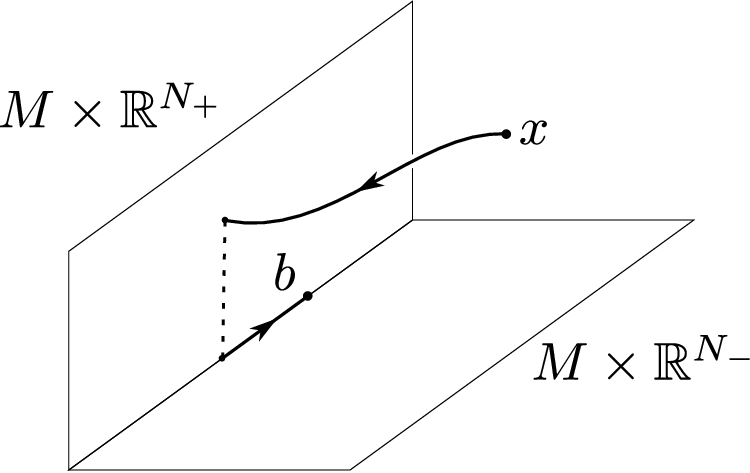} %
  \caption{Hybrid trajectories between Morse and ``stable Morse'' 
    critical points.}
\label{fig:MorseStableMorse}
\end{figure}

Using these hybrid moduli spaces, the proof of the following statement follows
literally that of its Floer analogue and is left to the reader~:
\begin{theorem}
The map $\Gen{H}\xrightarrow{\eval}\pi_{1}(M,\star)$ is onto.   
\end{theorem}

\subsection{Multiplicities}

Since the stable Morse situation is much easier to handle than the Floer
one, it is now not hard to give examples where several steps are
associated to the same index $1$ critical point or where there is more
than one step going through $\star$. The figure \ref{fig:StableMorseStep}
illustrates the former phenomenon, and the latter is similar.

\begin{figure}[!ht]
  \centering
  \includegraphics[scale=.6]{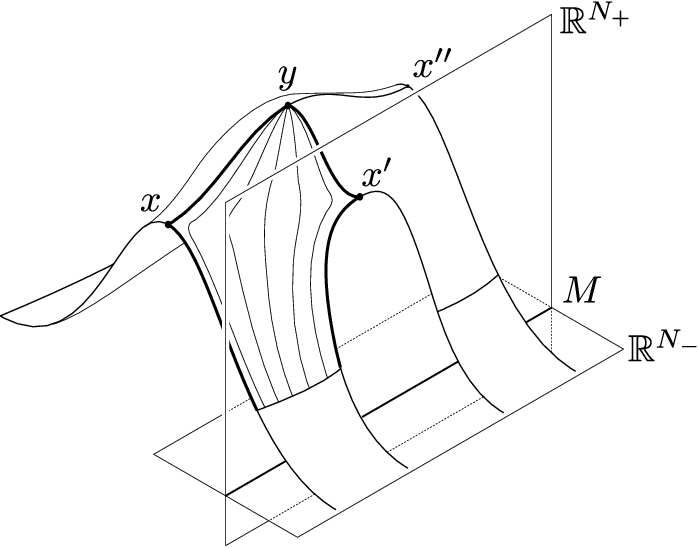} %
  \caption{Two steps through the same 
    index $1$ critical point $y$~: one involves $x$ and $x'$, the 
    other $x'$ and $x''$.}
\label{fig:StableMorseStep}
\end{figure}


\end{document}